	\newcommand{\Aff}{\mathrm{Aff}}
	\newcommand{\ev}{\mathrm{ev}}
	\newcommand{\id}{\mathrm{id}}
        \newcommand{\cs}{\mathrm{C}^\ast}
	\renewcommand{\restriction}{|}
\newcommand{\Ped}{\mathrm{Ped}}
	\theoremstyle{plain}
	\newtheorem{thma}{Theorem}
	\newtheorem{introtheorem}[thma]{Theorem}
	\theoremstyle{definition}
	\newtheorem{introquestion}[thma]{Question}
        \newtheorem{introobservation}[thma]{Observation}
	\theoremstyle{plain}
	\newtheorem{thm}{Theorem}[section]
	\newtheorem{lemma}[thm]{Lemma}
	\newtheorem{theorem}[thm]{Theorem}
	\newtheorem{proposition}[thm]{Proposition}
	\newtheorem{corollary}[thm]{Corollary}
	\theoremstyle{definition}
	\newtheorem{definition}[thm]{Definition}
	\newtheorem{remark}[thm]{Remark}
	\newtheorem{example}[thm]{Example}
	\numberwithin{equation}{section}
	\numberwithin{figure}{section}
\begin{document}
	\title{Self-adjoint traces on the Pedersen ideal of $\cs$-algebras}
    	\author{James Gabe} 
            \address[J.~Gabe]{%
    Department of Mathematics and Computer Science\\
    University of Southern Denmark\\
    Campusvej 55\\
    5230 Odense\\
    Denmark}
    \email{gabe@imada.sdu.dk}
            \author{Alistair Miller}
            \address[A.~Miller]{%
    Department of Mathematics and Computer Science\\
    University of Southern Denmark\\
    Campusvej 55\\
    5230 Odense\\
    Denmark}
    \email{mil@sdu.dk}

\thanks{This was supported by DFF grants 1054-00094B and 1026-00371B} 

\keywords{Trace space, non-unital $\cs$-algebra, Pedersen ideal}
\subjclass[2020]{46L05, 46L35}

\begin{abstract}
    In order to circumvent a fundamental issue when studying densely defined traces on $\cs$-algebras -- which we refer to as the Trace Question -- we initiate a systematic study of the set $T_{\mathbb R}(A)$ of self-adjoint traces on the Pedersen ideal of $A$.     
    The set $T_{\mathbb R}(A)$ is a topological vector space with a vector lattice structure, which in the unital setting reflects the Choquet simplex structure of the tracial states. We establish a form of Kadison duality for $T_{\mathbb R}(A)$ and compute $T_{\mathbb R}(A)$ for principal twisted étale groupoid $\cs$-algebras. We also answer the Trace Question positively for a large class of $\cs$-algebras.
\end{abstract}

\maketitle

\section{Introduction}

This paper adds a new perspective for studying traces on non-unital $\cs$-algebras. For unital $\cs$-algebras $A$ there is no doubt what tracial invariant to consider:  the set $T_1(A)$ of tracial states. For non-unital $\cs$-algebras, one often considers the set $T_+(A)$ of all densely defined lower semicontinuous tracial weights with the weak$^\ast$-topology coming from the Pedersen ideal $\Ped(A)$ (the minimal dense ideal in $A$). Equivalently, $T_+(A)$ can be identified with the set of positive linear tracial functionals on $\Ped(A)$. The following is the fundamental problem for using $T_+(A)$ as the primary tracial invariant, and we will refer to it as the Trace Question.

\begin{introquestion}[The Trace Question]\label{q:traces}
    Let $A$ be a $\cs$-algebra. Suppose that $f \colon T_+(A) \to \mathbb R$ is a continuous affine function with $f(0) = 0$. Is there a self-adjoint $a\in \Ped(A)$ such that $f(\tau) = \tau(a)$ for all $\tau \in T_+(A)$?
\end{introquestion}

The Trace Question turns out to have a positive answer for many $\cs$-algebras, for instance all simple $\cs$-algebras. This is arguably why $T_+(A)$ suffices (together with K-theory and their pairing) for classifying simple non-unital separable nuclear $\mathcal Z$-stable $\cs$-algebras satisfying the UCT (proved independently in \cite{EGLN, GL2, GL3, GL4} and in \cite{CGSTW2}). More generally, the class of $\cs$-algebras for which the Trace Question has an affirmative answer is closed under stable isomorphism, and contains all $\cs$-algebras with compact primitive ideal space, all $\cs$-algebras for which $T_+(A)$ has a compact base, as well as all commutative $\cs$-algebras, 
see Corollary \ref{c:traceproblem}.

However, we do not know if the Trace Question always has an affirmative answer. The key point of this paper is to show that by enriching $T_+(A)$ with (seemingly insignificant) additional structure, one obtains a better behaved invariant where the analogous Trace Question always has an affirmative answer.

The invariant -- which we denote by $T_{\mathbb R}(A)$ -- is the subspace of $\Ped(A)^\ast$ consisting of self-adjoint traces equipped with the weak$^\ast$-topology. This turns out to be the span of $T_+(A)$ (Corollary \ref{c:contlincomb}), so it might be hard to see how this contains any new information compared to $T_+(A)$. However, the following observation -- that the related Trace Question for $T_{\mathbb R}(A)$ has a positive answer -- motivates considering $T_{\mathbb R}(A)$ instead of $T_+(A)$.

\begin{introobservation}\label{o:trace}
        Let $A$ be a $\cs$-algebra. If $f \colon T_{\mathbb R}(A) \to \mathbb R$ is a continuous linear functional, then there is a self-adjoint $a\in \Ped(A)$ such that $f(\tau) = \tau(a)$ for all $\tau \in T_{\mathbb R}(A)$.
\end{introobservation}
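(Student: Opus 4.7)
The plan is to deduce this directly from the classical duality for dual pairs of real vector spaces. Let $E = \Ped(A)_{\mathrm{sa}}$ denote the self-adjoint part of the Pedersen ideal, viewed as a real vector space. Restriction to $E$ gives an injection $T_{\mathbb R}(A) \hookrightarrow \Hom_{\mathbb R}(E, \mathbb R)$, because any self-adjoint $\mathbb C$-linear functional $\tau$ on $\Ped(A)$ is determined by its values on self-adjoint elements via the decomposition $b = \tfrac{1}{2}(b+b^*) + i \cdot \tfrac{1}{2i}(b - b^*)$ together with the identity $\tau(b^*) = \overline{\tau(b)}$. The same decomposition shows that the weak-$\ast$ topology on $T_{\mathbb R}(A)$ induced from $\Ped(A)$ coincides with the weak topology $\sigma(T_{\mathbb R}(A), E)$, since evaluation at an arbitrary $b \in \Ped(A)$ factors through the two real evaluations at $\tfrac{1}{2}(b+b^*)$ and $\tfrac{1}{2i}(b-b^*)$.

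Once this identification is in place, the statement follows immediately from the classical duality theorem: for a dual pair of real vector spaces $\langle E, F\rangle$ with $F \subseteq E^\ast$, the topological dual of $(F, \sigma(F,E))$ is $E$ modulo the annihilator of $F$ in $E$. Applied to $F = T_{\mathbb R}(A)$ sitting inside the algebraic dual of $E = \Ped(A)_{\mathrm{sa}}$, this produces a self-adjoint $a \in \Ped(A)$ representing any continuous linear functional $f$ on $T_{\mathbb R}(A)$ by evaluation.

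There is no genuine analytic obstacle; the argument is essentially a setup-and-cite application of linear duality. The conceptual point worth emphasising is why this is so much easier than the Trace Question for $T_+(A)$: the cone $T_+(A)$ is not a vector space, so a continuous affine map with $f(0)=0$ need not extend linearly to its span, and even when it does, the extension need not remain weak-$\ast$ continuous. Working directly with $T_{\mathbb R}(A)$ bypasses both of these issues and reduces the representability question to the standard linear duality above.
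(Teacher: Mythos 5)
Your proposal is correct and matches the paper's approach: the paper establishes this as the surjectivity part of the map $\Ped(A)_{\mathrm{sa}} \to T_{\mathbb R}(A)^\ast$ in Theorem \ref{t:PedTcont}, citing exactly the classical dual-pair fact (via \cite[Proposition 2.4.4]{Pedersen-book-analysisnow}) that the $\sigma(F,E)$-continuous functionals on $F \subseteq E^\ast$ are those given by evaluation at elements of $E$. Your preliminary identification of the weak$^\ast$-topology on $T_{\mathbb R}(A)$ with $\sigma(T_{\mathbb R}(A), \Ped(A)_{\mathrm{sa}})$ is also made explicitly in the paper, so the two arguments coincide.
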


The subtle difference between the Trace Question and Observation \ref{o:trace}, is that while any continuous affine function $f\colon T_+(A)\to \mathbb R$ which vanishes at $0$ extends uniquely to a linear functional on $T_{\mathbb R}(A)$, it is not obvious that this extension is continuous. The Trace Question has an affirmative answer exactly if such extensions are automatically continuous.

A different approach for bypassing the Trace Question was set out by Elliott--Robert--Santiago \cite{ERS} by considering the set $T(A)$ of all lower semicontinuous tracial weights (not necessarily densely defined). They consider a class $L(T(A))$ of lower semicontinuous functions $f \colon T(A) \to [0,\infty]$. It is shown in \cite[Theorem 2]{Robert} that if $A$ is stable and $f\in L(T(A))$ then there is a positive element $a\in A$ such that $f(\tau) = \tau(a)$ for all $\tau\in T(A)$.  
Note however that $T(A)$, unlike $T_+(A)$ and $T_{\mathbb R}(A)$, does not pair with $K_0(A)$ (see Proposition \ref{p:pairing} for the pairing of $K_0(A)$ and $T_{\mathbb R}(A)$).

In order to systematically study $T_{\mathbb R}(A)$ we introduce a topology on $\Ped(A)$ as follows. Since $\Ped(A)$ is the directed union of all its $\cs$-subalgebras (Remark \ref{r:directed}), we equip $\Ped(A)$ with the induced locally convex inductive limit topology. The continuous dual space $\Ped(A)^\ast$ thus consists exactly of the linear functionals on $\Ped(A)$ which are bounded when restricted to any $\cs$-subalgebra of $\Ped(A)$. In particular, all positive linear functionals on $\Ped(A)$ are continuous. Formally, we define $T_{\mathbb R}(A) \subseteq \Ped(A)^\ast$ as the space of continuous self-adjoint traces. This is a real topological vector space which is ordered by the cone $T_+(A)$ of positive traces. Moreover, by utilising Jordan decompositions of linear functionals we give a different proof (Corollary \ref{c:lattice}) of a result by Pedersen \cite[Theorem 3.1]{Pedersen-MeasureIII} that $T_{\mathbb R}(A)$ is a vector lattice.\footnote{In the case where $A$ is unital, this gives an easy proof that the set of tracial states is a Choquet simplex. See \cite{BRtraces} for another recent short proof of this result.}

We consider the continuous dual space $T_{\mathbb R}(A)^\ast$ equipped with the compact-open topology. 
In the case where $A$ is unital, $T_{\mathbb R}(A)^\ast$ can be canonically identified with $\Aff T_1(A)$ (the space of continuous real affine functions on the tracial state space $T_1(A)$), and the compact-open topology agrees with the usual norm topology. The following is a  strengthening of Observation \ref{o:trace}. In the statement, $T_{\leq 1}(A)$ denotes the set of positive traces of norm at most $1$. The following is Theorem \ref{t:PedTcont} and Corollary \ref{c:norm}.

\begin{introtheorem}
    Let $A$ be a $\cs$-algebra. The canonical map $\Ped(A)_{\mathrm{sa}} \to T_{\mathbb R}(A)^\ast$ is a quotient map with respect to the topologies described above. 

    Consequently, for every $f\in T_{\mathbb R}(A)^\ast$ and $\epsilon >0$ there is an $a\in \Ped(A)_{\mathrm{sa}}$ such that $f(\tau) = \tau(a)$ for all $\tau\in T_{\mathbb R}(A)$ and $\| a \| \leq \sup_{\tau \in T_{\leq 1}(A)} |f(\tau)| + \epsilon$.
\end{introtheorem}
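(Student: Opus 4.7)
The plan is to proceed in three stages -- continuity, surjectivity combined with openness, and then the norm bound -- using locally convex duality. Write $\phi \colon \Ped(A)_{\mathrm{sa}} \to T_{\mathbb R}(A)^\ast$ for the canonical evaluation map $a \mapsto \hat a$.

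The structural fact driving everything is that $\Ped(A)_{\mathrm{sa}}$ with the inductive limit topology is \emph{barreled}, being a locally convex inductive limit of the Banach spaces $B_{\mathrm{sa}}$ over $\cs$-subalgebras $B \subseteq \Ped(A)$. Banach--Steinhaus then implies every weak*-compact subset of $\Ped(A)_{\mathrm{sa}}^{\ast}$ is equicontinuous, so any compact $K \subseteq T_{\mathbb R}(A)$ is dominated by a continuous seminorm on $\Ped(A)_{\mathrm{sa}}$, which yields continuity of $\phi$ into the compact-open topology. For surjectivity, $T_{\mathbb R}(A)$ is weak*-closed in $\Ped(A)_{\mathrm{sa}}^{\ast}$ (tracial identities are pointwise), so any continuous linear functional on $(T_{\mathbb R}(A), w^\ast)$ is dominated by some seminorm $\max_i |\tau(a_i)|$ and extends by Hahn--Banach to a weak*-continuous linear functional on $\Ped(A)_{\mathrm{sa}}^{\ast}$, necessarily realized by some $a \in \Ped(A)_{\mathrm{sa}}$.

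For openness, given an absolutely convex closed $0$-neighborhood $V \subseteq \Ped(A)_{\mathrm{sa}}$, its polar $V^\circ \subseteq \Ped(A)_{\mathrm{sa}}^{\ast}$ is weak*-compact by Alaoglu, so $K := V^\circ \cap T_{\mathbb R}(A)$ is weak*-compact. The main geometric step is to show $\phi(V) \supseteq K^\circ$ (polar inside $T_{\mathbb R}(A)^\ast$) via Hahn--Banach separation: if $f \in K^\circ$ had $\phi^{-1}(f) \cap V = \emptyset$, strict separation yields $\psi \in V^\circ$ which, since $\phi^{-1}(f)$ is an affine subspace with direction $T_{\mathbb R}(A)^\perp = \ker\phi$, must vanish on $T_{\mathbb R}(A)^\perp$; the bipolar identity $(T_{\mathbb R}(A)^\perp)^\perp = T_{\mathbb R}(A)$ (weak*-closedness) forces $\psi \in K$, but then $\psi(a_0) = f(\psi) \leq 1$ for any representative $a_0 \in \phi^{-1}(f)$ contradicts the strict separation.

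For the norm bound, set $M := \sup_{\tau \in T_{\leq 1}(A)} |f(\tau)|$ and pick $a_0 \in \Ped(A)_{\mathrm{sa}}$ with $\hat{a_0} = f$. It suffices to show the $\cs$-distance from $a_0$ to $T_{\mathbb R}(A)^\perp$ is at most $M$. Since $T_{\mathbb R}(A)^\perp$ is tautologically dense in its norm-closure in $A_{\mathrm{sa}}$, Banach-space Hahn--Banach expresses this distance as $\sup |\phi(a_0)|$ over norm-$\leq 1$ self-adjoint $\phi \in A^\ast$ annihilating $T_{\mathbb R}(A)^\perp$. Since commutators lie in $T_{\mathbb R}(A)^\perp$, such $\phi$ are exactly the bounded self-adjoint traces on $A$ of norm $\leq 1$; by Jordan decomposition $\phi = \lambda \tau_+ - \mu \tau_-$ with $\tau_\pm \in T_1(A)$ and $\lambda + \mu \leq 1$, one gets $|\phi(a_0)| \leq (\lambda+\mu) M \leq M$, while the reverse inequality comes from extending a $\tau \in T_{\leq 1}(A)$ approximately attaining $M$ to a bounded trace on $A$. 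I expect the main technical care to sit in the openness step, specifically in confirming via the bipolar identity that the separating functional truly lies in $T_{\mathbb R}(A)$ rather than merely its weak*-closure.
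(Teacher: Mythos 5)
Your proposal is correct, but it reaches the quotient-map statement by a different technical route than the paper, so let me compare. The paper's proof of Theorem \ref{t:PedTcont} avoids polars and separation entirely: continuity is obtained from Lemma \ref{l:compact} (compact subsets of $T_{\mathbb R}(A)$ are uniformly bounded on each $\cs$-subalgebra $D\subseteq\Ped(A)$, via uniform boundedness -- the same mechanism as your barreledness/Banach--Steinhaus step), and the quotient property is proved by taking an arbitrary seminorm $\rho$ on $T_{\mathbb R}(A)^\ast$ with continuous pullback $\tilde\rho$, writing $\tilde\rho(a)=\sup\{|\tau(a)|: |\tau|\le\tilde\rho\}$ by Hahn--Banach, and observing that the domination $|\tau|\le\tilde\rho$ forces $\tau$ to be tracial, hence to lie in a compact $K\subseteq T_{\mathbb R}(A)$, so that $\rho=\|\cdot\|_K$. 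Your openness argument via Alaoglu--Bourbaki, the bipolar theorem and a separation of $V$ from the affine fibre $\phi^{-1}(f)$ is the ``geometric dual'' of this seminorm computation; both hinge on exactly the same two facts, namely equicontinuity of compacta (barreledness of the inductive limit) and weak$^\ast$-closedness of $T_{\mathbb R}(A)$ in $(\Ped(A)_{\mathrm{sa}})^\ast$ (domination by trace-seminorms, resp.\ vanishing on the pre-annihilator, forces traciality). The paper's version has the advantage of producing the explicit quotient seminorm formula of Corollary \ref{c:seminorm}, from which the norm bound is read off by identifying $K_{\|\cdot\|}$ with the contractive traces; your direct distance-to-subspace argument for the norm bound, using the Jordan decomposition $\varphi=\lambda\tau_+-\mu\tau_-$ with $\lambda+\mu\le1$ to get $|\varphi(a_0)|\le M$, is a concrete instantiation of the same computation and is fine (and you correctly note that the approximant can be taken in $\Ped(A)_{\mathrm{sa}}$ itself, so that $a=a_0-b$ stays in the Pedersen ideal).

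One small point to tidy in the openness step: strict separation is not directly available, since $\phi^{-1}(f)$ is an affine subspace with no compactness. What you actually get from separating $\mathrm{int}(V)$ from $\phi^{-1}(f)$ is $\psi\in V^\circ$ with $\psi(a_0)\ge 1$, and combined with $\psi\in K$ and $f\in K^\circ$ this yields $f(\psi)=1$ exactly -- no contradiction when $f$ sits on the boundary of $K^\circ$. The argument therefore shows $\{f:\|f\|_K<1\}\subseteq\phi(V)$ rather than $K^\circ\subseteq\phi(V)$, which is still a compact-open neighbourhood of $0$ and suffices for openness; alternatively run the argument for $f\in\tfrac12 K^\circ$. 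This is a one-line repair, not a gap.
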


For compact convex sets $K$ there is an important duality due to Kadison \cite{Kadison-duality} between $K$ and $\Aff K$, where $K$ can be recovered from $\Aff K$ as the state space. We obtain a similar duality for $T_{\mathbb R}(A)$ and $T_{\mathbb R}(A)^\ast$. While $T_{\mathbb R}(A)$ is trivially isomorphic to the space of weak$^\ast$-continuous functionals on $T_{\mathbb R}(A)^\ast$, we emphasise that we do not consider $T_{\mathbb R}(A)^\ast$ with the weak$^\ast$-topology, but with the compact-open topology (which is the quotient topology coming from $\Ped(A)_{\mathrm{sa}}$). This is Proposition \ref{p:dualiso} and Theorem \ref{t:duality}.

\begin{introtheorem}[Duality]
    Let $A$ and $B$ be $\cs$-algebras. Then $T_{\mathbb R}(A)$ is canonically isomorphic to the space of compact-open-continuous linear functionals on $T_{\mathbb R}(A)^\ast$. This induces a contravariant categorical duality, that is, a bijective correspondence between weak$^\ast$-continuous positive linear maps $T_{\mathbb R}(B) \to T_{\mathbb R}(A)$ and compact-open-continuous positive linear maps $T_{\mathbb R}(A)^\ast \to T_{\mathbb R}(B)^\ast$. 
\end{introtheorem}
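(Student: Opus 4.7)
The plan is to derive both assertions from the quotient-map theorem stated just above, with the isomorphism being the substantive part and the functorial duality being essentially formal.

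For the isomorphism, I would exhibit the canonical evaluation map $\mathrm{ev}\colon T_{\mathbb R}(A)\to (T_{\mathbb R}(A)^\ast)^\vee$, where $(\cdot)^\vee$ denotes the compact-open-continuous linear dual, by $\tau\mapsto\bigl(f\mapsto f(\tau)\bigr)$. Well-definedness is immediate since point evaluations are among the defining seminorms of the compact-open topology, and injectivity follows because the functionals $q_A(a)$ for $a\in\Ped(A)_{\mathrm{sa}}$ (writing $q_A$ for the quotient $\Ped(A)_{\mathrm{sa}}\to T_{\mathbb R}(A)^\ast$) already separate traces. The substantive step is surjectivity: given $\phi\in(T_{\mathbb R}(A)^\ast)^\vee$, the universal property of the topological quotient lets me pull $\phi$ back to a continuous self-adjoint linear functional $\psi\coloneqq\phi\circ q_A$ on $\Ped(A)_{\mathrm{sa}}$. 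Because $\Ped(A)$ carries the inductive limit topology from its $\cs$-subalgebras, such a $\psi$ lies in $\Ped(A)^\ast$, and it remains to verify that $\psi$ is a trace. The key observation is that any commutator $[x,y]\in\Ped(A)$ decomposes as $a+ib$ with $a,b\in\Ped(A)_{\mathrm{sa}}$; for every $\tau\in T_{\mathbb R}(A)$ the identity $\tau([x,y])=0$ combined with the reality of $\tau$ on self-adjoint elements forces $\tau(a)=\tau(b)=0$, so $a,b\in\ker q_A$ and hence $\psi([x,y])=\phi(q_A(a))+i\phi(q_A(b))=0$.

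The categorical duality is then essentially formal. To a weak$^\ast$-continuous linear $\Phi\colon T_{\mathbb R}(B)\to T_{\mathbb R}(A)$ I would assign $\Phi^\ast(f)\coloneqq f\circ\Phi$; compact-open continuity of $\Phi^\ast$ follows because $\Phi$ sends compact sets to compact sets, so the preimage of a basic neighbourhood cut out by a compact $K\subseteq T_{\mathbb R}(B)$ is the basic neighbourhood cut out by the compact image $\Phi(K)\subseteq T_{\mathbb R}(A)$. Conversely, given compact-open-continuous linear $G\colon T_{\mathbb R}(A)^\ast\to T_{\mathbb R}(B)^\ast$, the isomorphism of the first part lets me define $\Phi\colon T_{\mathbb R}(B)\to T_{\mathbb R}(A)$ by demanding $\mathrm{ev}_{\Phi(\tau)}=\mathrm{ev}_\tau\circ G$ for every $\tau\in T_{\mathbb R}(B)$; weak$^\ast$-continuity of $\Phi$ reduces to noting that for each $a\in\Ped(A)_{\mathrm{sa}}$ the map $\tau\mapsto\Phi(\tau)(a)$ is the evaluation of $G(q_A(a))\in T_{\mathbb R}(B)^\ast$, which is weak$^\ast$-continuous by definition. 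The two assignments are visibly mutual inverses. For positivity, interpret $T_{\mathbb R}(A)^\ast$ with the dual cone $\{f:f(\sigma)\geq 0\text{ for all }\sigma\in T_+(A)\}$: one direction is the trivial chain $\tau\in T_+(B)\Rightarrow\Phi(\tau)\in T_+(A)\Rightarrow f(\Phi(\tau))\geq 0$ for $f\geq 0$, while the converse uses that $q_A(a)\geq 0$ for $a\in\Ped(A)_+$ to rewrite $\Phi(\tau)(a)=\Phi^\ast(q_A(a))(\tau)\geq 0$.

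The main obstacle I expect is the trace verification in the surjectivity step of the isomorphism: it is precisely there that one must exploit the self-adjointness built into $T_{\mathbb R}(A)$ to split an arbitrary commutator into two self-adjoint pieces, each annihilated by every trace. The other ingredients -- the compact-open continuity estimates and the positivity bookkeeping -- are routine formal manipulations once the quotient-map theorem is in hand.
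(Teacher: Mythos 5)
Your proposal is correct and follows essentially the same route as the paper: the isomorphism $T_{\mathbb R}(A)\cong (T_{\mathbb R}(A)^\ast,\mathrm{c.o.})^\ast$ is deduced from the quotient-map theorem exactly as in Proposition \ref{p:dualiso} (your commutator splitting $[x,y]=a+ib$ is just a slightly more explicit version of the paper's appeal to the identification of self-adjoint traces with tracial real functionals on $\Ped(A)_{\mathrm{sa}}$), and the two dual-map constructions with the compactness and positivity checks match the proof of Theorem \ref{t:duality}. No gaps.
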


Any $\ast$-homomorphism $A \to B$ induces a weak$^\ast$-continuous positive linear map $T_{\mathbb R}(B) \to T_{\mathbb R}(A)$. In fact, we may view $T_{\mathbb R}$ as a contravariant functor on the category of $\cs$-algebras. It is often convenient to consider the covariant functor $T_{\mathbb R}(-)^\ast$ instead, which by duality contains the exact same information as $T_{\mathbb R}$.

The functor $T_{\mathbb R}$ is stable in the sense that the $(1,1)$-corner inclusion $A \to A\otimes \mathcal K(\ell^2(\mathbb N))$ induces an isomorphism $T_{\mathbb R}(A\otimes \mathcal K(\ell^2(\mathbb N))) \cong T_{\mathbb R}(A)$. More generally, if $A\subseteq B$ is a full hereditary $\cs$-algebra then we obtain an isomorphism $T_{\mathbb R}(B) \cong T_{\mathbb R}(A)$ by restriction (Corollary \ref{c:Tfullher}).  The functor $T_{\mathbb R}$ is also invariant under approximate unitary equivalence of $\ast$-homomorphisms (Remark \ref{r:aue}).

For any locally compact Hausdorff space $X$, the real trace space $T_{\mathbb R}(C_0(X))$ is frequently identified\footnote{In fact, this is how real Radon measures on $X$ are defined in the Bourbaki book \cite[Chapter III]{Bourbaki-XIII}.} with the space of real Radon measures $\mathrm{Rad}_{\mathbb R}(X)$ on $X$ with the weak$^\ast$-topology coming from $C_c(X) = \Ped(C_0(X))$, via the map sending a measure $\mu$ to the trace $f \mapsto \int_X f \mathrm d\mu$.
We study traces more generally on the $\cs$-algebra of an étale groupoid $\mathcal G$, which have been considered for instance in \cite{NeshveyevKMSGroupoid, LiRenault, AfsarSims, NeshveyevStammeier, ChristensenKMSGroupoid, LiZhang}; we are especially interested in the entire structure of the trace space, notably its topology.
Consider the set $\mathrm{Rad}_{\mathbb R}^{\mathcal G}(\mathcal G^0)$ of $\mathcal G$-invariant real Radon measures on the unit space $\mathcal G^0$ equipped with the weak$^\ast$-topology coming from $C_c(\mathcal G^0)$. We show (Corollary \ref{c:groupoidtrace}) that for a twisted étale groupoid $(\mathcal G, \mathcal L)$ (not necessarily Hausdorff), every $\mu \in \mathrm{Rad}_{\mathbb R}^{\mathcal G}(\mathcal G^0)$ induces a trace $\tau_{\mu} \in T_{\mathbb R}(\cs_\lambda(\mathcal G, \mathcal L))$ which is uniquely determined by
\[
\tau_\mu(f) = \int_{\mathcal G^0} f|_{\mathcal G^0} \mathrm d\mu
\]
for compactly supported continuous\footnote{If $\mathcal G$ is not Hausdorff these need not actually be continuous, see Section \ref{s:groupoid}.} sections $f \in \Gamma_c(\mathcal G, \mathcal L)$ of the line bundle $\mathcal L \to \mathcal G$.
Moreover, when $\mathcal G$ is Hausdorff the weak$^\ast$-topology on this set of traces coming from $\Ped(\cs_\lambda(\mathcal G, \mathcal L))$ agrees with the one coming from $C_c(\mathcal G^0)$ (Lemma \ref{l:grpoidtop}). In the case where $\mathcal G$ is principal, this determines the entire space of traces (Corollary \ref{c:principal}).

\begin{introtheorem}
    Let $(\mathcal G, \mathcal L)$ be a principal twisted étale groupoid. Then the map $\mathrm{Rad}_{\mathbb R}^{\mathcal G}(\mathcal G^0) \to T_{\mathbb R}(\cs_\lambda(\mathcal G, \mathcal L))$ described above is an isomorphism of ordered topological vector spaces.
\end{introtheorem}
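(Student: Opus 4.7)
The plan is to exhibit an explicit inverse to the map, call it $\Phi \colon \mu \mapsto \tau_\mu$. It is manifestly real-linear and positive; injectivity follows since $\tau_\mu|_{C_c(\mathcal G^0)}$ recovers $\mu$ under the identification $T_{\mathbb R}(C_0(\mathcal G^0)) \cong \mathrm{Rad}_{\mathbb R}(\mathcal G^0)$ mentioned in the introduction. Continuity of $\Phi$ is immediate from $C_c(\mathcal G^0) \subseteq \Ped(\cs_\lambda(\mathcal G, \mathcal L))$, while continuity of $\Phi^{-1}$ on its image is precisely Lemma \ref{l:grpoidtop}. Thus, once surjectivity is verified together with $\mathcal G$-invariance of the candidate measure, we obtain an order isomorphism of topological vector spaces.

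For surjectivity, given $\tau \in T_{\mathbb R}(\cs_\lambda(\mathcal G, \mathcal L))$ I would set $\mu := \tau|_{C_c(\mathcal G^0)}$, viewed as a real Radon measure via the identification above. To see $\mu$ is $\mathcal G$-invariant, fix an open bisection $U$ with partial homeomorphism $\alpha_U := r|_U \circ (s|_U)^{-1} \colon s(U) \to r(U)$. For $f, h \in \Gamma_c(U, \mathcal L)$, the convolutions $h^* f$ and $f h^*$ land in $\Gamma_c(\mathcal G^0, \mathcal L|_{\mathcal G^0}) \cong C_c(\mathcal G^0)$, supported in $s(U)$ and $r(U)$ respectively. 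A local-trivialization calculation shows that if $F, H \in C_c(s(U))$ denote the pullbacks $f \circ s|_U^{-1}$ and $h \circ s|_U^{-1}$, then $h^* f = \overline{H} F$ and $f h^* = (F \overline{H}) \circ \alpha_U^{-1}$. The trace identity $\tau(h^* f) = \tau(f h^*)$, combined with the freedom to let $F, H$ range over $C_c(s(U))$, yields $\int g \, d\mu = \int g \circ \alpha_U^{-1} \, d\mu$ for all $g \in C_c(s(U))$, which is invariance under $U$.

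To identify $\tau$ with $\tau_\mu$, I would invoke principality. A bisection partition of unity reduces the claim to showing $\tau(f) = 0$ whenever $f \in \Gamma_c(U, \mathcal L)$ with $U$ an open bisection satisfying $U \cap \mathcal G^0 = \emptyset$. Principality forces $r(\gamma) \neq s(\gamma)$ for every $\gamma \in U$; by Hausdorffness of $\mathcal G^0$, compactness of $\mathrm{supp}(f)$, and a further partition of unity, one reduces to the case $\overline{r(U)} \cap \overline{s(U)} = \emptyset$. Urysohn then supplies $h \in C_c(\mathcal G^0)$ equal to $1$ on $\overline{r(U)}$ and $0$ on $\overline{s(U)}$; the convolution formulas $(hf)(\gamma) = h(r(\gamma)) f(\gamma)$ and $(fh)(\gamma) = f(\gamma) h(s(\gamma))$ yield $hf = f$ and $fh = 0$, so $\tau(f) = \tau(hf) = \tau(fh) = 0$. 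Combining with $\tau|_{C_c(\mathcal G^0)} = \mu$ and density of $\Gamma_c(\mathcal G, \mathcal L)$ in $\Ped(\cs_\lambda(\mathcal G, \mathcal L))$ finishes the job.

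The main obstacle is the twist-aware bookkeeping in the convolution formulas, especially assembling the local-trivialization identities into a global statement for signed measures in the $\mathcal G$-invariance step. If the statement is to cover the non-Hausdorff case, a secondary obstacle is that elements of $\Gamma_c(\mathcal G, \mathcal L)$ are then formal sums of continuous sections on Hausdorff open bisections rather than honest continuous sections; the bisection partition-of-unity step must be adapted accordingly, but the per-bisection Urysohn argument above still carries through verbatim.
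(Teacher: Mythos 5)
Your overall strategy is the same as the paper's (Lemma \ref{l:res}, Corollary \ref{c:groupoidtrace} and Corollary \ref{c:principal}): restrict $\tau$ to $C_c(\mathcal G^0)$ to obtain an invariant Radon measure, use principality plus a Urysohn function $h$ with $hf=f$, $fh=0$ to kill sections supported on bisections off the unit space, and use Lemma \ref{l:grpoidtop} for the topological part. Your invariance computation via $h^\ast\ast f$ and $f\ast h^\ast$ is essentially the paper's, which packages the ``freedom to let $F,H$ range'' step as fullness of the Hilbert $C_0(r(U))$-module $\Gamma_0(U,\mathcal L)$ (this sidesteps your worry about global trivializability of $\mathcal L|_U$, at the cost of an approximation that then requires boundedness of $\tau$ on $C_c(r(U)\cup s(U))$, obtained by shrinking $U$ to be relatively compact). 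Also note that principal étale groupoids are automatically Hausdorff, so your non-Hausdorff caveat is vacuous here — and this matters, since Lemma \ref{l:grpoidtop} is only available in the Hausdorff case.

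Two genuine problems. First, you have the two continuity directions backwards. Continuity of $\Phi^{-1}$, i.e.\ of $\tau\mapsto\tau|_{C_c(\mathcal G^0)}$, is the direction that is immediate from $C_c(\mathcal G^0)\subseteq\Ped(\cs_\lambda(\mathcal G,\mathcal L))$, since the topology on traces tests against all of $\Ped$. Continuity of $\Phi:\mu\mapsto\tau_\mu$ is the hard direction: one must show $\mu\mapsto\tau_\mu(a)$ is continuous for every $a\in\Ped(\cs_\lambda(\mathcal G,\mathcal L))$, even though the topology on measures only tests against $C_c(\mathcal G^0)$; this is exactly what Lemma \ref{l:grpoidtop} provides by producing $f\in C_c(\mathcal G^0)$ with $\tau_\mu(a)=\int f\,\mathrm d\mu$ for all $\mu$. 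Both ingredients appear in your write-up, but attached to the wrong claims. Second, the concluding appeal to ``density of $\Gamma_c(\mathcal G,\mathcal L)$ in $\Ped(\cs_\lambda(\mathcal G,\mathcal L))$'' does not suffice: traces in $T_{\mathbb R}$ may be unbounded, and two continuous traces on the Pedersen ideal agreeing on a norm-dense $\ast$-subalgebra need not agree without further control. What is needed is the injectivity half of Proposition \ref{p:extendtraces} (specialised in Proposition \ref{traces on groupoid algebras}): write $a\in\Ped(A)_+$ as $\sum_j c_j^\ast x_j^\ast x_jc_j$ with $x_j\in C_c(\mathcal G^0)$ via Lemma \ref{l:Xfull} and use that both traces are bounded on each $\overline{x_jAx_j^\ast}$. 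You should either cite that result or reproduce the factorisation argument; as written this step is a gap.
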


\subsection*{Acknowledgement} The authors would like to thank Leonel Robert for an email correspondence on the Trace Question. The first named author would also like to thank José Carrión, Jorge Castillejos, Sam Evington, Chris Schafhauser, Aaron Tikuisis, and Stuart White for enlightening conversations throughout the years about bounded traces. The authors would also like to thank the referees for their very helpful comments.

\section{Continuous traces on the Pedersen ideal}

In \cite{Pedersen-MeasureI}, Pedersen introduced an ideal -- now known as the \emph{Pedersen ideal} $\Ped(A)$ -- which is the unique minimal two-sided norm-dense ideal in a $\cs$-algebra $A$. The standard examples of Pedersen ideals are $\Ped(\mathcal K(\mathcal H))$ (for a Hilbert space $\mathcal H$) which exactly consists of the finite rank operators on $\mathcal H$; and $\Ped(C_0(X)) = C_c(X)$ for a locally compact Hausdorff space $X$, where $C_c(X)$ consists of all compactly supported continuous functions on $X$. 

The Pedersen ideal $\Ped(A)$ is constructed as follows: let $\Ped(A)_0$ be the set of all positive elements $a\in A$ for which there exist $e\in A_+$ such that $ea=a$. 
Let $\Ped(A)_+$ denote the hereditary cone generated by $\Ped(A)_0$, i.e.~the set of positive elements $a\in A$ for which there exist $a_1,\dots, a_n \in \Ped(A)_0$ such that $a \leq \sum_{j=1}^n a_j$. Then $\Ped(A)$ is the linear span of $\Ped(A)_+$. 

As seen in \cite[Section 5.6]{Pedersen-book-automorphism} it holds that $\Ped(A) \cap A_+ = \Ped(A)_+$ and that $\Ped(A)$ is the unique minimal two-sided norm-dense ideal in $A$.

We will be considering the Pedersen ideal with a locally convex structure which in general is stronger than the usual norm topology. The starting point is the following important result of Pedersen.

\begin{lemma}[{\cite[Proposition 5.6.2]{Pedersen-book-automorphism}}]\label{l:Pedher}
Let $X\subseteq \Ped(A)$ be a finite set. The hereditary $\cs$-subalgebra of $A$ generated by $X$ is contained in $\Ped(A)$. 
\end{lemma}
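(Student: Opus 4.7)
\begin{skproof}
My plan is to reduce to the case of a single positive generator lying in $\Ped(A)_0$, and then to exploit the defining algebraic relation of that cone.

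First I would consolidate the finite set $X$ into a single positive element by setting $h := \sum_{x \in X}(x^*x + xx^*)$. Since $\Ped(A)$ is a $\ast$-ideal and $\Ped(A) \cap A_+ = \Ped(A)_+$, each $x^*x$ and $xx^*$ lies in $\Ped(A)_+$, so $h \in \Ped(A)_+$; moreover the hereditary $\cs$-subalgebra of $A$ generated by $X$ coincides with $\overline{hAh}$. Next, using the description of $\Ped(A)_+$ as the hereditary cone of $\Ped(A)_0$, I would write $h \le H := h_1 + \cdots + h_m$ for some $h_j \in \Ped(A)_0$. Since $h \le H$, we have $\overline{hAh} \subseteq \overline{HAH}$, reducing the problem to showing $\overline{HAH} \subseteq \Ped(A)$.

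The technical core will then be to produce a single $e \in A_+$ satisfying $eH = H$, equivalently $eh_j = h_j$ for every $j$. Choose $e_j \in A_+$ with $e_jh_j = h_j$; by replacing each $e_j$ with $\min(e_j,1)$ one can assume $\|e_j\|\le 1$, using the functional-calculus identity that $eh=h$ with $\|e\|\le 1$ forces $f(e)h=f(1)h$ for every continuous $f$ on $\sigma(e)\cup\{0\}$ with $f(0)=0$ (so in particular $(1-e_j)^{1/2}h_j = 0$, and $e_j \ge p_{h_j}$). The element $e$ can then be built by induction on $m$, combining the existing ``unit'' for $h_1,\dots,h_k$ with $e_{k+1}$ via a symmetric expression of the form $1 - (1-e_{(k)})^{1/2}(1-e_{k+1})(1-e_{(k)})^{1/2}$ and using the annihilation identities $(1-e_{(k)})^{1/2}h_j = 0$ to propagate the defining identity across the induction. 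The hard part will be arranging this combination so that non-commutativity of the $e_j$'s does not break $eh_j = h_j$ for previously-processed indices; this is the main technical obstacle and may require a further argument passing to a separable subalgebra.

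Granted such $e$, the conclusion is immediate: for every $b = HaH \in HAH$ we have $eb = (eH)aH = HaH = b$, and continuity of left multiplication by $e$ extends this to $eb = b$ for all $b \in \overline{HAH}$. Hence every positive $b \in \overline{HAH}$ lies in $\Ped(A)_0 \subseteq \Ped(A)$, and since $\overline{HAH}$ is linearly spanned by its positive cone, $\overline{HAH} \subseteq \Ped(A)$, as required.
\end{skproof}
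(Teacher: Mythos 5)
Your opening reductions are all correct (consolidating $X$ into $h$, dominating $h$ by $H=h_1+\cdots+h_m$ with $h_j\in\Ped(A)_0$, the monotonicity $\overline{hAh}\subseteq\overline{HAH}$, the normalisation of local units, and the observation that $eH=H$ would force $\overline{HAH}_+\subseteq\Ped(A)_0$). However, the step you flag as ``the main technical obstacle'' is not merely technical: the common local unit you are trying to build does not exist in general, so no amount of care with the inductive combination (or passage to separable subalgebras) can rescue the argument. Concretely, $\Ped(A)_0$ is not closed under addition. Let $A=\bigoplus_n^{c_0}M_2+\mathbb{C}p\subseteq\prod_n M_2$, where $p=(p_n)_n$ with $p_n$ the projection onto $\mathbb{C}u$, $u=(1,0)$, and let $q=(q_n)_n$ with $q_n$ the projection onto $\mathbb{C}v_n$, $v_n=(\cos\tfrac1n,\sin\tfrac1n)$. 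Since $\|q_n-p_n\|=\sin\tfrac1n\to0$ we have $q\in A$, and $A$ is a non-unital $\cs$-algebra containing $p$ and $q$, each of which lies in $\Ped(A)_0$ (being its own local unit). Every $e\in A$ has the form $e=x+cp$ with $\|x_n\|\to0$. If $ep=p$ then $x_np_n=(1-c)p_n$ for all $n$, forcing $c=1$ and $x_nu=0$; if moreover $eq=q$ then $x_nv_n=(1-p_n)v_n=\sin(\tfrac1n)\,u^{\perp}$, and combining with $x_nu=0$ gives $x_nu^{\perp}=u^{\perp}$, so $\|x_n\|\geq1$, a contradiction. No choice of decomposition helps: if $p+q\leq\sum_jh_j$ and $e$ (normalised) satisfies $eh_j=h_j$ for all $j$, then $(1-e)(p+q)(1-e)=0$, which forces $ep=p$ and $eq=q$. (Independently of this, your proposed formula $1-(1-e_{(k)})^{1/2}(1-e_{k+1})(1-e_{(k)})^{1/2}$ already fails to satisfy $e_{(k+1)}h_{k+1}=h_{k+1}$ for two rank-one projections onto distinct non-orthogonal lines in $M_2$.)

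For the comparison: the paper does not prove this lemma but quotes \cite[Proposition 5.6.2]{Pedersen-book-automorphism}, and it records the engine of Pedersen's argument as Lemma \ref{l:trick}. The correct route keeps the $m$ local units separate rather than merging them: writing $H^{1/2}=(\sum_jh_j)^{1/2}$ and taking $b\in\overline{HAH}_+$, one has $b^{1/2}\in\overline{H^{1/2}A}$, and Lemma \ref{l:trick} (with $a_j=h_j^{1/2}$) yields $b=\sum_{j=1}^m\Phi_j(b^{1/2})^{\ast}\Phi_j(b^{1/2})$ with $\Phi_j(b^{1/2})\in\overline{h_j^{1/2}A}$. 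Each $\Phi_j(b^{1/2})\Phi_j(b^{1/2})^{\ast}$ lies in $\overline{h_j^{1/2}Ah_j^{1/2}}$, which \emph{is} annihilated by $1-e_j$ and hence consists of elements of $\Ped(A)_0$ (this is where your final argument applies, but only one $h_j$ at a time); one then invokes the invariance of the hereditary cone $\Ped(A)_+$ under $zz^{\ast}\mapsto z^{\ast}z$ and its closure under finite sums to conclude $b\in\Ped(A)_+$. In short, a positive element of $\overline{HAH}$ is exhibited as a finite sum of pieces each individually admitting a local unit, whereas a single local unit for all of $\overline{HAH}$ need not exist.
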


In particular, whenever $x\in \Ped(A)$ then the entire $\cs$-algebra $\cs(x)$ generated by $x$ is contained in $\Ped(A)$, so $\Ped(A)$ contains many $\cs$-subalgebras of $A$ even though it is itself not norm-closed in general. When $D$ is a (hereditary) $\cs$-subalgebra of $A$ such that $D \subseteq \Ped(A)$ we will say (which is slight abuse of notation) that $D \subseteq \Ped(A)$ is a (hereditary) $\cs$-subalgebra. 

We will need the following minor variation of the above lemma. 

\begin{proposition}\label{p:upwards}
Let $A$ be a $\cs$-algebra, and let $D_1, D_2,\dots, D_n \subseteq \Ped(A)$ be $\cs$-subalgebras of the Pedersen ideal. Then the hereditary $\cs$-subalgebra of $A$ generated by $D_1, D_2,\dots, D_n$ is contained in $\Ped(A)$. 
\end{proposition}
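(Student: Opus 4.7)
The plan is, for a given $a \in H_+$ where $H$ denotes the hereditary $\cs$-subalgebra of $A$ generated by $D_1, \ldots, D_n$, to produce finitely many positive elements of $\Ped(A)$ whose hereditary $\cs$-subalgebra contains $a$, so that Lemma \ref{l:Pedher} yields $a \in \Ped(A)$. The central idea is that even though the $D_i$ need not be $\sigma$-unital, any single element of $H$ only involves countably many elements of $\bigcup_i D_i$, which lets us pass to separable (and hence $\sigma$-unital) subalgebras.

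The first step is to show that for every $a \in H_+$ there is a \emph{countable} subset $S \subseteq \bigcup_i D_i$ with $a$ lying in the hereditary $\cs$-subalgebra generated by $S$. Setting $B := \cs(\bigcup_i D_i)$, we have $H = \overline{BAB}$, so $a$ is a norm-limit of finite sums of the form $\sum_j b_j c_j b_j'$ with $b_j, b_j' \in B$ and $c_j \in A$. Since $B$ is the closure of the $*$-algebra generated by $\bigcup_i D_i$, each $b_j$ and $b_j'$ can be approximated in norm by $*$-polynomials in finitely many elements of $\bigcup_i D_i$. Refining these approximations for a sequence of tolerances $\epsilon_k \to 0$ and collecting the elements used at each step gives a countable $S$ as required.

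Next, partition $S = S_1 \sqcup \cdots \sqcup S_n$ with $S_i \subseteq D_i$, and put $D_i' := \cs(S_i)$. Each $D_i'$ is a separable (hence $\sigma$-unital) $\cs$-subalgebra of $D_i \subseteq \Ped(A)$, so admits a strictly positive element $p_i \in D_i' \subseteq \Ped(A)$. Since $D_i' \subseteq \overline{p_i A p_i}$, the hereditary $\cs$-subalgebra generated by $S$ is contained in the hereditary $\cs$-subalgebra generated by the finite set $\{p_1, \ldots, p_n\} \subseteq \Ped(A)$, which by Lemma \ref{l:Pedher} lies in $\Ped(A)$. Therefore $a \in \Ped(A)$, and since $H$ is the linear span of its positive cone, we conclude $H \subseteq \Ped(A)$.

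The main obstacle is the countability reduction in the first step: one must control how polynomial-approximation errors for the $b_j, b_j'$ propagate through the expression $\sum_j b_j c_j b_j'$, taking into account that the middle factors $c_j \in A$ may have large norm, while arranging matters so that only countably many elements of $\bigcup_i D_i$ appear across all the approximations. Once this bookkeeping is set up, the passage to strictly positive generators and the invocation of Lemma \ref{l:Pedher} are essentially formal.
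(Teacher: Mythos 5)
Your proof is correct, and it shares the paper's two essential ingredients: passing to separable (hence $\sigma$-unital) $\cs$-subalgebras $D_i' \subseteq D_i$, taking strictly positive elements $p_i$, and feeding the finite set $\{p_1,\dots,p_n\} \subseteq \Ped(A)$ into Lemma \ref{l:Pedher}. Where you genuinely diverge is in how you reduce a general element of the hereditary subalgebra to this separable picture. The paper invokes Cohen's factorisation theorem to write any $x$ in the hereditary subalgebra as $x = by$ with $b \in \cs(D_1,\dots,D_n)$ and $y \in A$; then only the \emph{single} element $b$ needs to be captured inside $\cs(C_1,\dots,C_n)$ for separable $C_i \subseteq D_i$, which is immediate, and the conclusion $x = by \in \Ped(A)$ follows from $b \in \Ped(A)$ together with the fact that $\Ped(A)$ is an ideal. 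This sidesteps entirely the error-propagation bookkeeping in $\sum_j b_j c_j b_j'$ that you correctly flag as the main obstacle in your route: that bookkeeping does go through (fix the finite sum within $\epsilon_k/2$ first, then choose polynomial approximants with tolerances depending on the norms of the now-fixed $c_j$, $b_j$, $b_j'$), but it is exactly the kind of quantitative fuss that Cohen factorisation is designed to eliminate. A minor further difference: the paper first reduces to $n=2$ and iterates, whereas you handle general $n$ in one pass; both are fine. If you want to streamline your argument, replace your countability reduction for $a$ by the factorisation $a = by$ and apply your separable-subalgebra step to $b$ alone.
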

\begin{proof}
Since we may iteratively construct the final hereditary $\cs$-subalgebra by adding in one $D_k$ at a time, it suffices to consider the case $n=2$. 
Let $B_0 = \cs(D_1,D_2)$ and let $B$ be the hereditary $\cs$-subalgebra generated by $B_0$. Fix $x\in B$. By Cohen's factorisation theorem (see \cite[Theorem 4.6.4]{BrownOzawa-book-approx} for an easy proof in the case that we need it for) we have $B= B_0 A$ so we may write $x= by$ for a $b\in B_0$ and $y \in A$. Pick separable $\cs$-subalgebras $C_j \subseteq D_j$ for $j=1,2$ such that $b\in \cs(C_1,C_2)$. As $C_j$ is separable and thus $\sigma$-unital we let $h_j \in C_j$ be strictly positive. Then $h_1,h_2 \in \Ped(A)$ and by Lemma \ref{l:Pedher} the hereditary $\cs$-subalgebra $C$ generated by $\{h_1,h_2\}$ is contained in $\Ped(A)$. Since $b\in \cs(C_1,C_2) \subseteq C \subseteq \Ped(A)$, and since $\Ped(A)$ is an ideal, it follows that $x= by \in \Ped(A)$.
\end{proof}

\begin{remark}\label{r:directed}
It follows that the Pedersen ideal is the directed union of each of the following sets:
\begin{itemize}
\item[(1)] the set of $\cs$-subalgebras of $\Ped(A)$;
\item[(2)] the set of separable $\cs$-subalgebras of $\Ped(A)$;
\item[(3)] the set of hereditary $\cs$-subalgebras of $\Ped(A)$; and
\item[(4)] the set of $\sigma$-unital hereditary $\cs$-subalgebras of $\Ped(A)$.
\end{itemize}
\end{remark}

\begin{definition}\label{d:locconvex}
Let $A$ be a $\cs$-algebra. We equip the Pedersen ideal $\Ped(A)$ with the locally convex direct limit topology coming from writing $\Ped(A) = \bigcup_{D} D$ as the union of its $\cs$-subalgebras $D \subseteq \Ped(A)$. 
\end{definition}

Hence, a linear map $\rho \colon \Ped(A) \to X$ into a locally convex space $X$ is continuous if and only if the restriction $\rho|_{D}$ is continuous for every $\cs$-subalgebra $D \subseteq \Ped(A)$. 
As a special case, any positive linear functional $\Ped(A) \to \mathbb C$ is continuous even when it is not bounded.

It is easy to see that one could have chosen any of the four directed sets from Remark \ref{r:directed}  
to define the topology on $\Ped(A)$, since a map $\rho \colon D \to X$ from a $\cs$-algebra $D$ to a topological space $X$ is continuous if and only if it is continuous on all separable $\cs$-subalgebras of $D$.\footnote{In fact, if $\rho$ was not continuous we could find a sequence $a_n \in D$ converging to $a\in D$ such that $\rho(a_n)$ does not converge to $\rho(a)$. Then $D_0 = \cs(a_n : n\in \mathbb N)$ is separable and $\rho|_{D_0}$ is not continuous.} When considering traces, we will for instance focus on hereditary $\cs$-subalgebras since these play well with the Jordan decomposition (Proposition \ref{p:JordanPed}). 

\begin{remark}
This way of defining continuous functionals on $\Ped(A)$ is similar to (and inspired by) the way in which one defines real or complex Radon measures on locally compact spaces $X$. For instance, as done in the Bourbaki book \cite[Chapter III]{Bourbaki-XIII}, this is done by identifying complex Radon measures on $X$ with linear functionals on $C_c(X) = \Ped(C_0(X))$ (the space of compactly supported functions) which are continuous when $C_c(X)$ is equipped with the same locally convex inductive limit topology as described above. 
\end{remark}

\begin{remark}
    In \cite{Pedersen-MeasureII}, Pedersen considers a different locally convex topology on $\Ped(A)$ which he uses to define $\cs$-integrals, a generalisation of traces. As we only consider traces in this paper, we think the topology defined in Definition \ref{d:locconvex} is more suitable and easier to work with.
\end{remark}

A functional $\tau \colon \Ped(A) \to \mathbb C$ is called \emph{tracial} (or a trace) if $\tau (xy) = \tau(yx)$ for all $x,y\in \Ped(A)$. Note that if $\tau$ is a continuous trace then so is its adjoint $\tau^\ast$ (given by $\tau^\ast(x) = \overline{\tau(x^\ast)}$), and thus any continuous trace $\tau$ on $\Ped(A)$ can be written as $\tau = \tau_1 + i \tau_2$ where $\tau_1 = \tfrac{1}{2} (\tau + \tau^\ast)$ and $\tau_2= \tfrac{1}{2i} (\tau- \tau^\ast)$ are continuous self-adjoint traces on $\Ped(A)$. 

\begin{definition}\label{d:traces}
We let $T_{\mathbb R}(A)$ denote the real vector space of continuous self-adjoint traces on $\Ped(A)$. We equip $T_{\mathbb R}(A)$ with the weak$^\ast$-topology. We consider $T_{\mathbb R}(A)$ as an ordered topological vector space with the positive cone $T_+(A)$ of positive traces.
\end{definition}

\begin{example}
    Let $X$ be a discrete topological space. Then $\Ped(C_0(X)) = C_c(X)$ and $T_{\mathbb R}(C_0(X))$ is canonically isomorphic to $\prod_{X} \mathbb R$ equipped with the product topology.

    More generally, if $(A_x)_{x\in X}$ is a family of $\cs$-algebras, then $\Ped(\bigoplus_{x\in X} A_x)$ is the algebraic direct sum $\bigoplus_{x\in X} \Ped(A_x)$ and $T_{\mathbb R}(\bigoplus_{x\in X} A_x)$ is canonically identified with $\prod_{x\in X} T_{\mathbb R}(A_x)$ equipped with the product topology.
\end{example}

Pedersen showed in \cite{Pedersen-MeasureI} that there is a one-to-one correspondence (given by restriction) between $T_+(A)$ and the set of lower semicontinuous densely defined tracial weights on $A$. This will not play a significant role in this paper, although we use it for convenience in Remark \ref{r:aue}.

We often consider $T_{\mathbb R}(A)$ as a space of functionals on the real vector space $\Ped(A)_{\mathrm{sa}}$ of self-adjoint elements in $\Ped(A)$. Each $\tau \in T_{\mathbb R}(A)$ restricts to a functional $\tau \colon \Ped(A)_{\mathrm{sa}} \to \mathbb R$ which is tracial in the sense that $\tau(x^\ast x) = \tau(xx^\ast)$ for all $x\in \Ped(A)$. It is straightforward to see that, conversely, any linear functional $\tau \colon \Ped(A)_{\mathrm{sa}} \to \mathbb R$ with this tracial property extends uniquely to a self-adjoint tracial functional $\tilde \tau$ on $\Ped(A)$. The extension $\tilde \tau$ is continuous if and only if $\tau \colon \Ped(A)_{\mathrm{sa}} \to \mathbb R$ is continuous, so we may identify the self-adjoint traces as a subspace $T_{\mathbb R}(A) \subseteq (\Ped(A)_{\mathrm{sa}})^*$, where its topology agrees with the weak$^\ast$ topology. This will be used often without reference.

One advantage of working with continuous traces on the Pedersen ideal is that there is a natural trace pairing with the K-theory group $K_0$. 

\begin{remark}\label{r:K0Ped}
Recall that because $\Ped(A)$ is dense in $A$ and is the union of its $\cs$-subalgebras, and since $K_0$ preserves direct limits we get that
\[  K_0(A)  =  \{ [p]_0 - [q]_0 \mid  n \in \mathbb N, p,q \in M_n(\Ped(A)^\sim), p-q \in M_n(\Ped(A)) \}.\]
\end{remark}
For $\tau \in T_{\mathbb R}(A)$ and $n \in \mathbb N$, let $\tau^{(n)} \colon M_n(\Ped(A)) \to \mathbb C$ denote the trace given by 
\begin{equation}
    \tau^{(n)} ((a_{i,j})_{i,j=1}^n) = \sum_{j=1}^n \tau(a_{j,j}) \qquad \textrm{for }(a_{i,j})_{i,j=1}^n \in M_n(\Ped(A)). 
\end{equation}

\begin{proposition}\label{p:pairing}
There is a unique bilinear pairing 
\[ \rho_A \colon K_0(A) \times T_{\mathbb R}(A) \to \mathbb R \] such that for any $n \in \mathbb N$, any projections $p,q \in M_n(\Ped(A)^\sim)$ with $p-q \in M_n(\Ped(A))$ and any $\tau \in T_{\mathbb R}(A)$, we have
\[ \rho_A([p]_0 - [q]_0, \tau) = \tau^{(n)}(p-q). \]
Moreover, $\rho_A$ is continuous.
\end{proposition}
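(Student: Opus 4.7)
The plan is as follows. Uniqueness of $\rho_A$ is immediate from Remark~\ref{r:K0Ped}: the elements $[p]_0 - [q]_0$ with $p, q \in M_n(\Ped(A)^\sim)$ and $p - q \in M_n(\Ped(A))$ generate $K_0(A)$, so bilinearity forces the pairing everywhere. Bilinearity itself is transparent, since addition in $K_0$ corresponds to direct sum of projections and $\tau \mapsto \tau^{(n)}$ is linear.

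A key preparatory lemma is the extended tracial identity $\tau^{(n)}(XY) = \tau^{(n)}(YX)$ for every $X \in M_n(\Ped(A))$ and $Y \in M_n(\tilde A)$. Expanding into matrix entries and decomposing $X$ as a linear combination of positive elements, this reduces to the single-entry claim $\tau(ay) = \tau(ya)$ with $a \in \Ped(A)_+$ and $y \in \tilde A$. Since $a^{1/2} \in \Ped(A)$ by Lemma~\ref{l:Pedher}, ordinary traciality on $\Ped(A)$ applied to the factorisation $a = a^{1/2} \cdot a^{1/2}$ gives both $\tau(ay)$ and $\tau(ya)$ equal to $\tau(a^{1/2} y a^{1/2})$.

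For well-definedness, suppose $P, Q \in M_n(\Ped(A)^\sim)$ are projections with $P - Q \in M_n(\Ped(A))$ and $[P]_0 = [Q]_0$ in $K_0(A)$; we show $\tau^{(n)}(P - Q) = 0$. After stabilisation $N = n + k$, there is a unitary $u \in M_N(\tilde A)$ with $u(P \oplus 1_k)u^* = Q \oplus 1_k$. Using density of $\Ped(A)$ in $A$, standard functional-calculus manipulations (noting that $(1+h)^{-1/2} - 1 \in M_N(\Ped(A))$ for sufficiently small $h \in M_N(\Ped(A))_{\mathrm{sa}}$, valid by Lemma~\ref{l:Pedher}), and the classical formula giving a unitary that implements the equivalence of two close projections, one finds a unitary $v \in M_N(\Ped(A)^\sim)$ with $v(P \oplus 1_k)v^* = Q \oplus 1_k$. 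Factoring out the scalar part $\pi(v)$---whose contribution to the trace vanishes by the extended tracial identity---reduces to the case $v = 1 + v_0$ with $v_0 \in M_N(\Ped(A))$, and then unitarity $v^*v = 1$ yields $v_0 + v_0^* + v_0^*v_0 = 0$. Expanding
\[ v(P \oplus 1_k)v^* - (P \oplus 1_k) = v_0(P \oplus 1_k) + (P \oplus 1_k)v_0^* + v_0(P \oplus 1_k)v_0^* \]
and applying the extended tracial identity to each summand collapses the trace to
\[ \tau^{(N)}\bigl((P \oplus 1_k)(v_0 + v_0^* + v_0^*v_0)\bigr) = 0, \]
which equals $\tau^{(n)}(Q - P)$.

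Continuity is straightforward: since $K_0(A)$ is discrete, it suffices that for each fixed class $\eta = [p]_0 - [q]_0$ the map $\tau \mapsto \rho_A(\eta, \tau) = \sum_{i=1}^n \tau((p-q)_{i,i})$ be weak$^\ast$-continuous, which it is as a finite $\mathbb R$-linear combination of coordinate evaluations on $\Ped(A)$. The principal obstacle is the perturbation argument in the third paragraph producing the Pedersen-ideal unitary $v$; once this is in hand, the trace computation collapses formally via the extended tracial identity.
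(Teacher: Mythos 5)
Your proof is correct, but it takes a genuinely different route from the paper's. The paper never perturbs the implementing unitary: using Remark~\ref{r:K0Ped} and the fact that $\Ped(A)$ is the directed union of its $\cs$-subalgebras (Remark~\ref{r:directed}), together with continuity of $K_0$ under direct limits, it finds a genuine $\cs$-subalgebra $D \subseteq \Ped(A)$ in which the relevant $K_0$-identity already holds; since $\tau|_D$ is bounded by the very definition of continuity on $\Ped(A)$, it extends to a bounded self-adjoint tracial functional on $\widetilde D$ (with value $0$ at the unit), and bounded traces agree on Murray--von Neumann equivalent projections over $\widetilde D$. This sidesteps both your extended tracial identity and the entire perturbation of $u$ into $M_N(\Ped(A)^\sim)$. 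Your route is more hands-on and avoids invoking continuity of $K_0$; the identity $\tau^{(n)}(XY)=\tau^{(n)}(YX)$ for $X\in M_n(\Ped(A))$, $Y\in M_n(\widetilde A)$ is a clean observation, and the final algebraic collapse is correct. The price is that the perturbation step, which you rightly identify as the principal obstacle, needs more care than your sketch shows, though the details do work out: (i) the ``classical formula'' for close projections hides a second functional-calculus step, $(1-(e-f)^2)^{-1/2}$ with $e = v'(P\oplus 1_k)v'^\ast$ and $f = Q\oplus 1_k$, which also needs Lemma~\ref{l:Pedher}; this is fine because $e-f$ in fact lies in $M_N(\Ped(A))$, its scalar part vanishing since $\pi(P)=\pi(Q)$ (a consequence of $P-Q\in M_n(\Ped(A))$); and (ii) after factoring out $c=\pi(v)$ there is, besides $cTc^\ast$ with $T=(1+v_0)(P\oplus 1_k)(1+v_0)^\ast-(P\oplus 1_k)$, the leftover term $c(P\oplus 1_k)c^\ast-(P\oplus 1_k)$, which equals $cS_0c^\ast-S_0$ for the non-scalar part $S_0$ of $P\oplus 1_k$ (the scalar contribution vanishes because $c$ is forced to commute with $\pi(P\oplus 1_k)$) and is likewise killed by the extended tracial identity. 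Neither point is a genuine gap, but both are exactly the bookkeeping that the paper's shorter argument via $\widetilde D$ avoids.
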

\begin{proof}
Every element of $K_0(A)$ is of the form $[p]_0 - [q]_0$ as in the statement by Remark \ref{r:K0Ped}, so if $\rho_A$ is well-defined then it is unique. Continuity also follows because $K_0(A)$ carries the discrete topology and for each $p,q$ as above, the map $T_{\mathbb R}(A) \to \mathbb R \colon \tau \mapsto \tau^{(n)}(p-q)$ is a linear combination of evaluation functionals. Thus we need only show well-definedness.

Fix $\tau \in T_{\mathbb R}(A)$ and take $n, \tilde n \in \mathbb N$, $p,q \in M_n(\Ped(A)^\sim)$ and $\tilde p, \tilde q \in M_{\tilde n}(\Ped(A)^\sim)$ with $p-q \in M_n(\Ped(A))$ and $\tilde p - \tilde q \in M_{\tilde n}(\Ped(A))$ such that $[p]_0 - [q]_0 = [\tilde p]_0 - [\tilde q]_0$ and note that by embedding into the larger matrix algebra we may assume $\tilde n = n$. Pick a $\cs$-subalgebra $D_0 \subseteq \Ped(A)$ such that $p,q,\tilde p, \tilde q \in M_n(D_0^\sim)$ and $p-q, \tilde p - \tilde q \in M_n(D_0)$. Since $K_0$ preserves direct limits, there is a $\cs$-subalgebra $D \subseteq \Ped(A)$ containing $D_0$ such that $[p]_0 - [q]_0 = [\tilde p]_0 - [\tilde q]_0$ in $K_0(D)$. Thus for some $k \in \mathbb N$ we have $p \oplus \tilde q \oplus 1_k \sim q \oplus \tilde p \oplus 1_k$ in $M_{2n+k}(\tilde D)$ where $1_k$ is the unit of $M_k(\tilde D)$. The restriction $\tau |_D \colon D \to \mathbb C$ is bounded and extends to a bounded self-adjoint tracial functional $\tilde \tau \colon \tilde D \to \mathbb C$ (e.g.~by letting $\tilde \tau (1) = 0$). Hence
\begin{equation}
\tilde \tau^{(2n+k)}(p \oplus \tilde q \oplus 1_k) =  \tilde \tau^{(2n+k)}(q\oplus \tilde p \oplus 1_k)
\end{equation}
and therefore $\tau^{(n)}(p-q) = \tau^{(n)}(\tilde p - \tilde q)$.
\end{proof}

\section{Jordan decomposition}

We will show that $T_{\mathbb R}(A)$ is a vector lattice (although not a topological vector lattice). Consequently, the continuous traces on the Pedersen ideal are exactly the ones that are a linear combination of positive traces. This was originally proved by Pedersen \cite[Theorem 3.1]{Pedersen-MeasureIII}, but our approach gives a different description of the lattice structure in terms of the Jordan decomposition of traces on hereditary $\cs$-subalgebras of the Pedersen ideal, which we will use subsequently.

Recall the \emph{Jordan decomposition} of self-adjoint linear functionals on $\cs$-algebras: for any bounded self-adjoint linear functional $\rho$ on a $\cs$-algebra $D$, there is a unique pair $(\rho^+,\rho^-)$ of positive linear functionals on $D$ such that $\rho = \rho^+-\rho^-$ and $\| \rho\| = \|\rho^+\| + \| \rho^-\|$  (see for instance \cite[Theorem 3.2.5]{Pedersen-book-automorphism}).

\begin{proposition}\label{p:Jordan1}
Let $D$ be a $\cs$-algebra and let $\tau\in D^\ast_{\mathrm{sa}}$ be a bounded self-adjoint tracial functional with Jordan decomposition $(\tau^+,\tau^-)$. Then:
\begin{itemize}
    \item[$(a)$] $\tau^+$ and $\tau^-$ are tracial;
    \item[$(b)$] for every $d\in D_+$ and $\epsilon>0$ there exist $c_1,c_2\in D_+$ so that $c_1 + c_2 = d$ 
    and 
    \begin{equation}\label{eq:tau+c}
    \tau^+(c_1) < \epsilon, \qquad \tau^-(c_2) < \epsilon;
    \end{equation}
    \item[$(c)$] $\tau^+$ is the supremum of $\tau$ and $0$ in $D^\ast_{\mathrm{sa}}$. 
\end{itemize} 
\end{proposition}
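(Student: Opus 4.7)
The plan is to deduce (a) from uniqueness of the Jordan decomposition, (b) from a Kaplansky density argument in the enveloping von Neumann algebra, and (c) as an immediate consequence of (b).

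For (a), the strategy is the following. For any unitary $u \in \tilde D$ and $a \in D$, both $u^*au$ and $auu^*=a$ lie in $D$, and a short computation (writing $u = \alpha + b$ with $b \in D$ and using that $\tau$ is tracial on $D$) shows $\tau(u^*au) = \tau(auu^*) = \tau(a)$. Thus the functional $\tau^u(\cdot) := \tau(u^*\,\cdot\, u)$ equals $\tau$. But since conjugation by $u$ sends positive functionals to positive functionals and preserves norms, $((\tau^+)^u, (\tau^-)^u)$ is a Jordan decomposition of $\tau^u = \tau$, so uniqueness forces $(\tau^\pm)^u = \tau^\pm$. Rearranging gives $\tau^\pm(ua) = \tau^\pm(au)$ for $u\in U(\tilde D)$ and $a \in D$. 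By Russo--Dye the closed unit ball of $\tilde D$ is the closed convex hull of $U(\tilde D)$, so this identity extends to all elements of $\tilde D$ by linearity and boundedness of $\tau^\pm$, proving the tracial property.

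For (b), I would pass to the bidual $D^{**}$, where $\tau$ extends to a normal self-adjoint tracial functional $\tilde\tau$ whose Jordan decomposition $(\tilde\tau^+, \tilde\tau^-)$ restricts to $(\tau^+, \tau^-)$ on $D$. Since the $\tilde\tau^\pm$ are tracial by (a), their support projections $p_\pm$ are central and orthogonal, and $\tilde\tau^+(\cdot) = \tilde\tau(p_+\,\cdot\,)$ while $\tilde\tau^-(\cdot) = -\tilde\tau(p_-\,\cdot\,)$. By Kaplansky density, pick a net $y_\lambda \in D_+$ with $\|y_\lambda\| \le 1$ converging strongly to $1 - p_+$ in $D^{**}$; then $d^{1/2}y_\lambda d^{1/2}$ converges $\sigma$-weakly to $d^{1/2}(1-p_+)d^{1/2}$, and centrality of $p_\pm$ together with $p_+(1-p_+) = 0 = p_-p_+$ yields
\[
\tilde\tau^+(d^{1/2}(1-p_+)d^{1/2}) = \tilde\tau(p_+(1-p_+)d) = 0, \qquad \tilde\tau^-(d^{1/2}p_+ d^{1/2}) = -\tilde\tau(p_-p_+ d) = 0.
\]
Thus for large $\lambda$ the elements $c_1 := d^{1/2} y_\lambda d^{1/2}$ and $c_2 := d^{1/2}(1-y_\lambda)d^{1/2}$ lie in $D_+$, sum to $d$, and satisfy the required bounds. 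The main technical point is ensuring that the strong Kaplansky approximation induces sufficient $\sigma$-weak convergence after sandwiching by $d^{1/2}$ so that the normal functionals $\tilde\tau^\pm$ can be applied; this relies critically on the centrality of the support projections furnished by (a).

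For (c), one direction is immediate: $\tau^+ \ge 0$ and $\tau^+ - \tau = \tau^- \ge 0$, so $\tau^+$ is an upper bound for $\{0, \tau\}$ in $D^*_{\mathrm{sa}}$. For minimality, let $\sigma \in D^*_{\mathrm{sa}}$ satisfy $\sigma \ge 0$ and $\sigma \ge \tau$. Given $d \in D_+$ and $\epsilon > 0$, apply (b) to write $d = c_1 + c_2$ with $\tau^+(c_1), \tau^-(c_2) < \epsilon$, and estimate
\[
(\sigma - \tau^+)(d) \ge -\tau^+(c_1) + \tau(c_2) - \tau^+(c_2) = -\tau^+(c_1) - \tau^-(c_2) > -2\epsilon.
\]
Letting $\epsilon\to 0$ gives $(\sigma-\tau^+)(d) \ge 0$ for all $d \in D_+$, so $\sigma \ge \tau^+$.
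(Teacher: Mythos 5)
Your proposal is correct. Parts (a) and (c) are essentially the paper's own argument: (a) is the same uniqueness-of-Jordan-decomposition-under-unitary-conjugation trick (the paper does not even bother with the Russo--Dye step, simply asserting traciality from invariance under unitaries of $\widetilde D$), and (c) is the same two-term estimate using the splitting from (b). Where you genuinely diverge is (b). The paper stays entirely inside $D$: it picks $x\in D_{\mathrm{sa}}$ with $\|x\|\leq 1$ nearly attaining $\|\tau\| = \|\tau^+\|+\|\tau^-\|$, so that $\tau^+(1-x)+\tau^-(1+x)<\delta$, and sets $c_1=\tfrac12 d^{1/2}(1-x)d^{1/2}$, $c_2=\tfrac12 d^{1/2}(1+x)d^{1/2}$, using traciality from (a) to convert $\tau^+(c_1)$ into $\tfrac12\tau^+((1-x)^{1/2}d(1-x)^{1/2})\leq\tfrac12\|d\|\tau^+(1-x)$. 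You instead pass to $D^{\ast\ast}$, use that the support projections of the normal tracial functionals $\tilde\tau^\pm$ are central and orthogonal, and Kaplansky-approximate $1-p_+$ by positive contractions $y_\lambda$ in $D$; the $\sigma$-weak convergence of $d^{1/2}y_\lambda d^{1/2}$ together with normality of $\tilde\tau^\pm$ then does the job. Both are valid; your route requires the standard (but unproved here) facts that the normal extension of $\tau$ to $D^{\ast\ast}$ is tracial, that its normal Jordan decomposition restricts to $(\tau^+,\tau^-)$, and that orthogonality of the decomposition translates into orthogonal central supports, whereas the paper's argument is elementary and self-contained, which matters given that the authors explicitly advertise this proposition as giving an elementary proof of the lattice structure. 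Your version does buy a conceptual picture (the splitting $d=c_1+c_2$ is an approximation of the exact central splitting $d = d^{1/2}(1-p_+)d^{1/2} + d^{1/2}p_+d^{1/2}$ in the bidual) at the cost of heavier machinery.
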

\begin{proof}
$(a)$: If $u\in \widetilde{D}$ is a unitary, then $(u \tau^+ u^\ast, u \tau^- u^\ast)$ is a Jordan decomposition of $u\tau u^\ast = \tau$, so $\tau^\pm = \tau^\pm(u(-)u^\ast)$ by uniqueness. Hence $\tau^+$ and $\tau^-$ are tracial.

$(b)$: Let $\delta >0$ such that $\tfrac{1}{2} \|d\| \delta < \epsilon$. Pick $x\in D_{\mathrm{sa}}$ with $\| x\| \leq 1$ such that \begin{equation}
    \tau^+(x) - \tau^-(x) + \delta = \tau(x) + \delta > \| \tau\| = \| \tau^+ \| + \| \tau^- \| = \tau^+ (1) + \tau^-(1)
\end{equation}
where we extended $\tau^\pm$ to the unitisation of $D$ in the unique norm-preserving way. Note that $1-x$ and $1+x$ are positive and 
\begin{equation}
    \tau^+(1-x) + \tau^-(1+x) < \delta.
\end{equation}
Let $c_1 = \tfrac{1}{2} d^{1/2} (1-x) d^{1/2}$ and $c_2 = \tfrac{1}{2} d^{1/2} (1+x) d^{1/2}$ which are positive and $c_1 + c_2 = d$. As $\tau^+$ is positive, and is a trace by $(a)$, we have
\begin{equation}
    \tau^+(c_1) = \tfrac{1}{2} \tau^+ ((1-x)^{1/2} d (1-x)^{1/2}) \leq \tfrac{1}{2}\| d\| \tau^+(1-x) \leq \tfrac{1}{2} \| d\| \delta < \epsilon,
\end{equation}
and similarly $\tau^-(c_2) < \epsilon$.

$(c)$: Clearly $\tau^+ \geq \tau$ and $\tau^+ \geq 0$. Let $\rho \in D^\ast_{\mathrm{sa}}$ such that $\tau \leq \rho$ and $0\leq \rho$. Let $d\in D_+$ and $\epsilon>0$ and pick $c_1,c_2$ as in part $(b)$. Then
    \[
        \tau^+(c_1) - \epsilon \leq 0 \leq \rho(c_1), \qquad \tau^+ (c_2) - \epsilon \leq \tau^+ (c_2) - \tau^-(c_2) = \tau(c_2) \leq \rho(c_2),
    \]
    and thus 
    \[
    \tau^+(d) - 2\epsilon = \tau^+(c_1)+\tau^+(c_2) - 2\epsilon \leq \rho(c_1)+ \rho(c_2) = \rho(d).
    \]
    Hence $\tau^+(d) \leq \rho(d)$ for all $d\in D_+$ and therefore $\tau^+ \leq \rho$ in $D^\ast_{\mathrm{sa}}$. 
\end{proof}

\begin{remark}
    Proposition \ref{p:Jordan1}(c) gives an elementary proof (assuming the existence of Jordan decompositions of bounded self-adjoint functionals) that the set of bounded self-adjoint tracial functionals on a $\cs$-algebra forms a vector lattice with join  $\tau_1 \vee \tau_2 = (\tau_1 - \tau_2)^+ + \tau_2$ and meet $\tau_1 \wedge \tau_2 = \tau_1 - (\tau_1 - \tau_2)^+$. In particular, the set of tracial states on a unital $\cs$-algebra is a Choquet simplex.
\end{remark}

\begin{remark}
If $D$ is a $\cs$-algebra and $(\tau_i)$ is a bounded net of self-adjoint tracial functionals converging weak$^\ast$ to $\tau$, and if $(\tau_i^+, \tau_i^-)$ and $(\tau^+, \tau^-)$ are the respective Jordan decompositions, then it is in general not true that $\tau_i^+ \to \tau^+$ and $\tau_i^- \to \tau^-$ (even when we know that $(\tau_i^+)$ and $(\tau_i^-)$ both converge to tracial functionals). 

For example, let $D= C([-1,1])$ and $\tau_n = \mathrm{ev}_{1/n} - \mathrm{ev}_{-1/n}$ (where $\mathrm{ev}_t$ is evaluation at $t$). Then $\tau_n \xrightarrow{w^\ast} 0$ but $\tau_n^{\pm} = \mathrm{ev}_{\pm 1/n} \xrightarrow{w^\ast} \ev_0 \ne 0$. 
\end{remark}

\begin{lemma}\label{l:Jordan2}
Let $C\subseteq D$ be a hereditary $\cs$-subalgebra and let $\tau \in D^\ast_{\mathrm{sa}}$ with Jordan decomposition $(\tau^+,\tau^-)$. If $\tau$ is tracial then $(\tau^+|_C, \tau^-|_C)$ is the Jordan decomposition of $\tau|_C$. 
\end{lemma}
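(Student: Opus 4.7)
The plan is to prove $\tau^+|_C = (\tau|_C)^+$; the analogous identity $\tau^-|_C = (\tau|_C)^-$ then follows by subtraction, and together these show that $(\tau^+|_C, \tau^-|_C)$ is the Jordan decomposition of $\tau|_C$. Set $\sigma = \tau|_C$, which is a bounded self-adjoint tracial functional on $C$. Since $\sigma = \tau^+|_C - \tau^-|_C$ exhibits $\sigma$ as a difference of positive functionals, and Proposition \ref{p:Jordan1}(c) characterises $\sigma^+$ as the least positive functional dominating $\sigma$, the inequality $\sigma^+ \leq \tau^+|_C$ is immediate.

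For the reverse inequality, the idea is to fix $d \in C_+$ and $\epsilon > 0$, and apply Proposition \ref{p:Jordan1}(b) to $d$ viewed as an element of $D_+$. This produces $c_1, c_2 \in D_+$ with $c_1 + c_2 = d$, $\tau^+(c_1) < \epsilon$, and $\tau^-(c_2) < \epsilon$. The crucial observation — and the only place the hereditary hypothesis is used — is that $c_1 = d - c_2 \leq d$ (and similarly $c_2 \leq d$), so hereditariness of $C$ forces $c_1, c_2 \in C_+$. Then using $c_2 \in C$ so that $\tau(c_2) = \sigma(c_2)$, the estimate
\[
\tau^+(d) = \tau^+(c_1) + \tau^+(c_2) < \epsilon + \tau(c_2) + \tau^-(c_2) < 2\epsilon + \sigma(c_2) \leq 2\epsilon + \sigma^+(d)
\]
should follow, where the last step combines $\sigma(c_2) \leq \sigma^+(c_2)$ with positivity of $\sigma^+$ and $c_2 \leq d$. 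Letting $\epsilon \to 0$ gives $\tau^+|_C \leq \sigma^+$, completing the equality $\tau^+|_C = \sigma^+$.

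The main (and really only) subtle point is keeping the decomposition $d = c_1 + c_2$ furnished by Proposition \ref{p:Jordan1}(b) inside $C$; once hereditariness delivers this for free, the rest is a short manipulation using only positivity and the characterisation of $\sigma^+$ from part (c). I do not anticipate any further obstacles.
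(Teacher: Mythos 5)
Your proof is correct. It does, however, verify a different characterisation of the Jordan decomposition than the paper does, even though both arguments run on the same engine. The paper checks the defining norm identity $\| \tau|_C\| = \| \tau^+|_C\| + \| \tau^-|_C\|$ directly: it picks a positive contraction $d \in C$ nearly attaining $\|\tau^+|_C\| + \|\tau^-|_C\|$, splits $d = c_1 + c_2$ via Proposition \ref{p:Jordan1}(b), keeps $c_1, c_2$ in $C$ by hereditariness, and tests $\tau$ against $c_1 - c_2$ to get the reverse norm inequality; uniqueness of the Jordan decomposition then finishes the job. You instead check the order-theoretic characterisation from Proposition \ref{p:Jordan1}(c), showing $\tau^+|_C = (\tau|_C) \vee 0$ by a two-sided comparison, with the nontrivial direction $\tau^+|_C \leq (\tau|_C)^+$ again coming from the part (b) splitting plus hereditariness. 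Your chain of estimates is sound: $\tau^+(c_2) = \tau(c_2) + \tau^-(c_2)$, $c_2 \in C$ gives $\tau(c_2) = \sigma(c_2) \leq \sigma^+(c_2) \leq \sigma^+(d)$, and the $\epsilon$'s are absorbed correctly. What each approach buys: the paper's route needs only part (b) and the norm-uniqueness of the Jordan decomposition, so it is marginally more self-contained; yours leans additionally on part (c) but is arguably more conceptual, since it identifies $\tau^+|_C$ directly as the lattice supremum $\sigma \vee 0$ in $C^\ast_{\mathrm{sa}}$, which is the form in which the lemma is actually used later (Corollaries \ref{c:lattice} and \ref{c:herlattice}). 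Both proofs isolate the same single use of the hereditary hypothesis, namely that $0 \leq c_i \leq d \in C$ forces $c_i \in C$.
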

\begin{proof}
It suffices to show that $\| \tau|_C\| = \| \tau^+|_C\| + \| \tau^-|_C\|$. Clearly $\| \tau|_C\| \leq \| \tau^+|_C\| + \| \tau^-|_C\|$. For the other inequality, let $\epsilon >0$. Pick $d\in C$ a positive contraction (e.g.~from an approximate identity in $C$) such that 
\begin{equation}
\| \tau^+|_C\| + \| \tau^-|_C\| - \epsilon \leq \tau^+(d) + \tau^-(d).
\end{equation}
Use Proposition \ref{p:Jordan1} to find $c_1,c_2\in D_+$ such that $c_1 + c_2 = d$, $\tau^+(c_1) < \epsilon$ and $\tau^-(c_2) < \epsilon$. Then $c_1,c_2$ are positive contractions which are in $C$ since $C \subseteq D$ is hereditary. 
Let $c = c_1-c_2$ which is in the unit ball of $C$. Then
\begin{eqnarray}
\| \tau|_C\| \geq \tau(c) &=& \tau^+(c_1) + \tau^-(c_2) - \tau^+(c_2) - \tau^-(c_1) \nonumber\\
&>& \tau^+(d) + \tau^-(d) - 4 \epsilon \nonumber\\
&\geq& \| \tau^+|_C\| + \|\tau^-|_C\| - 5 \epsilon.
\end{eqnarray}
As $\epsilon>0$ was arbitrary it follows that $\| \tau^+|_C\| + \| \tau^-|_C\| \leq \| \tau|_C\|$. 
\end{proof}

\begin{proposition}\label{p:JordanPed}
If $\tau \colon \Ped(A) \to \mathbb C$ is a continuous self-adjoint trace, then there are unique positive traces $\tau^+ ,\tau^- \colon \Ped(A) \to \mathbb C$ such that $\tau = \tau^+ - \tau^-$ and $(\tau^+|_D, \tau^-|_D)$ is the Jordan decomposition of $\tau|_D$ for every hereditary $\cs$-subalgebra $D\subseteq \Ped(A)$. 
\end{proposition}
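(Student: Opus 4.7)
The plan is to construct $\tau^+$ and $\tau^-$ by gluing together the Jordan decompositions of the restrictions $\tau|_D$ as $D$ ranges over hereditary $\cs$-subalgebras of $\Ped(A)$. Fix such a $D$. Since $\tau$ is continuous on $\Ped(A)$ with its inductive limit topology, $\tau|_D$ is a bounded self-adjoint tracial functional on the $\cs$-algebra $D$, and hence admits a Jordan decomposition $(\tau_D^+,\tau_D^-)$ into positive tracial functionals on $D$ by Proposition \ref{p:Jordan1}$(a)$.

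Now I would define $\tau^{\pm}\colon \Ped(A) \to \mathbb C$ pointwise: given $x\in \Ped(A)$, Lemma \ref{l:Pedher} guarantees the existence of a hereditary $\cs$-subalgebra $D\subseteq \Ped(A)$ containing $x$, and I set $\tau^{\pm}(x) := \tau_D^{\pm}(x)$. For well-definedness, given two such $D_1,D_2$, Proposition \ref{p:upwards} provides a hereditary $\cs$-subalgebra $E\subseteq \Ped(A)$ containing $D_1\cup D_2$; each $D_i$ is hereditary inside $E$ (being hereditary in $A$), so Lemma \ref{l:Jordan2} yields $\tau_E^{\pm}|_{D_i} = \tau_{D_i}^{\pm}$, and in particular $\tau_{D_1}^{\pm}(x) = \tau_E^{\pm}(x) = \tau_{D_2}^{\pm}(x)$.

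Next I would verify that $\tau^+$ and $\tau^-$ are positive linear traces satisfying $\tau = \tau^+ - \tau^-$. For any finite set $x_1,\dots,x_n\in \Ped(A)$, combining Lemma \ref{l:Pedher} with Proposition \ref{p:upwards} produces a single hereditary $\cs$-subalgebra $D\subseteq \Ped(A)$ containing all of them, and then linearity, positivity, the tracial identity and $\tau = \tau^+ - \tau^-$ all reduce to the corresponding statements for the bounded tracial functionals $\tau_D^{\pm}$ on the $\cs$-algebra $D$. The property that $(\tau^+|_D, \tau^-|_D)$ equals the Jordan decomposition of $\tau|_D$ for every hereditary $D\subseteq \Ped(A)$ holds by construction, via the well-definedness argument applied with one of the subalgebras taken to be $D$ itself. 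Uniqueness is then immediate: if $(\sigma^+,\sigma^-)$ is another such pair, then for any $x\in \Ped(A)$ and any hereditary $D\subseteq \Ped(A)$ containing $x$ we have $\sigma^{\pm}(x) = \sigma^{\pm}|_D(x) = \tau_D^{\pm}(x) = \tau^{\pm}(x)$ by the uniqueness of Jordan decompositions for bounded functionals.

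The main obstacle is the coherence of the local Jordan decompositions, and this is exactly what Lemma \ref{l:Jordan2} (compatibility of Jordan decomposition with restriction to a hereditary $\cs$-subalgebra) together with Proposition \ref{p:upwards} (the directedness of hereditary $\cs$-subalgebras of $\Ped(A)$) are designed to provide; once these tools are in hand, the argument is essentially a direct-limit construction and everything else — positivity, linearity, traciality, and uniqueness — follows formally.
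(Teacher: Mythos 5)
Your proposal is correct and follows essentially the same route as the paper, whose proof is simply the observation that the claim follows from Lemma \ref{l:Jordan2} together with the directedness of the hereditary $\cs$-subalgebras of $\Ped(A)$ (Remark \ref{r:directed}, via Proposition \ref{p:upwards}). You have merely written out in detail the gluing argument that the paper leaves implicit, and all the steps (well-definedness via Lemma \ref{l:Jordan2}, reduction of linearity, positivity, traciality and uniqueness to a single hereditary $\cs$-subalgebra) check out.
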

\begin{proof}
This follows from Lemma \ref{l:Jordan2} and Remark \ref{r:directed}.
\end{proof}

\begin{definition}
    The \emph{Jordan decomposition} $(\tau^+, \tau^-)$ of a continuous self-adjoint trace $\tau \colon \Ped(A) \to \mathbb C$ is the decomposition given in Proposition \ref{p:JordanPed}.
\end{definition}

\begin{corollary}\label{c:contlincomb}
A tracial linear functional $\tau\colon \Ped(A) \to \mathbb C$ is continuous if and only if it is a linear combination of positive traces.
\end{corollary}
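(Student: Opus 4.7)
The plan is to derive this corollary almost immediately from the groundwork laid in Proposition~\ref{p:JordanPed}, together with the standing observation (just after Definition~\ref{d:locconvex}) that every positive linear functional on $\Ped(A)$ is automatically continuous.

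For the easy implication, suppose $\tau$ is a linear combination of positive traces, say $\tau = \sum_{j=1}^k \lambda_j \tau_j$ with $\lambda_j \in \mathbb{C}$ and $\tau_j \in T_+(A)$. Each $\tau_j$ is continuous because positive linear functionals on $\Ped(A)$ are continuous, so $\tau$ is continuous as a finite linear combination of continuous functionals.

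For the converse, assume $\tau \colon \Ped(A) \to \mathbb{C}$ is a continuous tracial linear functional. As already noted in the paragraph preceding Definition~\ref{d:traces}, the adjoint $\tau^\ast$ is again a continuous trace, so we may decompose $\tau = \tau_1 + i\tau_2$ where $\tau_1 = \tfrac{1}{2}(\tau + \tau^\ast)$ and $\tau_2 = \tfrac{1}{2i}(\tau - \tau^\ast)$ are continuous self-adjoint traces, i.e.\ elements of $T_{\mathbb{R}}(A)$. Now applying Proposition~\ref{p:JordanPed} to each $\tau_j$ yields positive traces $\tau_j^+, \tau_j^- \in T_+(A)$ with $\tau_j = \tau_j^+ - \tau_j^-$. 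Substituting back gives
\[
\tau = \tau_1^+ - \tau_1^- + i\tau_2^+ - i\tau_2^-,
\]
expressing $\tau$ as a linear combination of positive traces.

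There is no real obstacle here: the substance is packaged into Proposition~\ref{p:JordanPed}, which already guarantees that the Jordan pieces $\tau_j^\pm$ are genuine positive traces on the whole Pedersen ideal (and hence automatically lie in the continuous dual). The only thing to be careful about is that the self-adjoint/anti-self-adjoint splitting $\tau = \tau_1 + i\tau_2$ preserves both continuity and the tracial property, which is immediate from the formulas for $\tau_1$ and $\tau_2$.
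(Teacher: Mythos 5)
Your proof is correct and follows exactly the route the paper intends: the forward direction uses that positive linear functionals on $\Ped(A)$ are automatically continuous in the inductive limit topology, and the converse splits $\tau$ into self-adjoint and anti-self-adjoint parts and applies the Jordan decomposition of Proposition~\ref{p:JordanPed} to each. Nothing is missing.
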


\begin{corollary}[Pedersen]\label{c:lattice}
For any $\cs$-algebra $A$, $T_\mathbb{R}(A)$ with positive cone $T_+(A)$ is a real vector lattice.
\end{corollary}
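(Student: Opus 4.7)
The plan is to exploit the Jordan decomposition from Proposition \ref{p:JordanPed} to define candidate lattice operations, and then verify the universal property by restricting to each hereditary $\cs$-subalgebra of $\Ped(A)$ and invoking Proposition \ref{p:Jordan1}(c) pointwise on the positive cone.

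Concretely, given $\tau_1,\tau_2 \in T_{\mathbb R}(A)$, the difference $\eta := \tau_1 - \tau_2$ lies in $T_{\mathbb R}(A)$, so Proposition \ref{p:JordanPed} produces positive traces $\eta^+,\eta^- \in T_+(A)$ with $\eta = \eta^+ - \eta^-$. I would then set
\[
\tau_1 \vee \tau_2 := \eta^+ + \tau_2, \qquad \tau_1 \wedge \tau_2 := \tau_1 - \eta^+,
\]
mimicking the formula from the remark following Proposition \ref{p:Jordan1}. That $\tau_1 \vee \tau_2$ is an upper bound of $\tau_1$ and $\tau_2$ is immediate: $(\tau_1 \vee \tau_2) - \tau_2 = \eta^+ \geq 0$, while $(\tau_1 \vee \tau_2) - \tau_1 = \eta^+ - \eta = \eta^- \geq 0$.

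The main step is the least-upper-bound property. Suppose $\rho \in T_{\mathbb R}(A)$ satisfies $\rho \geq \tau_1$ and $\rho \geq \tau_2$; I want to deduce $\eta^+ \leq \rho - \tau_2$. Let $\sigma := \rho - \tau_2 \in T_+(A)$, so $\sigma \geq 0$ and $\sigma \geq \eta$. To verify $\sigma - \eta^+ \in T_+(A)$, it suffices (by Remark \ref{r:directed}) to check that $(\sigma - \eta^+)|_D \geq 0$ for every hereditary $\cs$-subalgebra $D \subseteq \Ped(A)$. On such $D$, both $\sigma|_D$ and $\eta|_D$ are bounded self-adjoint tracial functionals; by Proposition \ref{p:JordanPed}, $\eta^+|_D$ is precisely the positive part of the Jordan decomposition of $\eta|_D$. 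Since $\sigma|_D$ is positive and dominates $\eta|_D$, Proposition \ref{p:Jordan1}(c) --- which identifies $\eta^+|_D$ as the supremum of $\eta|_D$ and $0$ in $D^\ast_{\mathrm{sa}}$ --- gives $\eta^+|_D \leq \sigma|_D$, as required.

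The meet follows either by a symmetric argument or by the identity $\tau_1 \wedge \tau_2 = -((-\tau_1) \vee (-\tau_2))$, and combined with the upper-bound observation this shows $T_{\mathbb R}(A)$ is a vector lattice with positive cone $T_+(A)$. The only subtle point is the reduction of the global lattice inequality to the hereditary-subalgebra level, but this is exactly what Proposition \ref{p:JordanPed} is designed to enable; everything else is routine.
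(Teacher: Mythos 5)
Your proposal is correct and follows essentially the same route as the paper: both identify the positive part $\eta^+$ from Proposition \ref{p:JordanPed} as the supremum $\eta \vee 0$ by restricting to hereditary $\cs$-subalgebras and invoking Proposition \ref{p:Jordan1}(c), then obtain the general join and meet by translation. The only difference is that you verify the least-upper-bound property for $\tau_1 \vee \tau_2 = (\tau_1-\tau_2)^+ + \tau_2$ directly, whereas the paper proves $\tau^+ = \tau \vee 0$ and cites the standard translation argument as well known.
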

\begin{proof}
    $(T_{\mathbb R}(A), T_+(A))$ is clearly an ordered vector space. For $\tau\in T_{\mathbb R}(A)$, $\tau^+$ is the supremum of $\tau$ and $0$ when restricted to every hereditary $\cs$-subalgebra (Proposition \ref{p:Jordan1}(c)) and thus $\tau^+ = \tau \vee 0$.    
    It is well-known (and straightforward to verify) that this implies that $T_{\mathbb R}(A)$ is a vector lattice with join $\tau_1 \vee \tau_2 = (\tau_1 - \tau_2)^+ + \tau_2$ and meet $\tau_1 \wedge \tau_2 = \tau_1 - (\tau_1 - \tau_2)^+$. 
\end{proof}

The following follows from Lemma \ref{l:Jordan2}.

\begin{corollary}\label{c:herlattice}
    Let $A\subseteq B$ be a hereditary $\cs$-subalgebra. Then the restriction map $T_{\mathbb R}(B) \to T_{\mathbb R}(A)$ is a lattice homomorphism.
\end{corollary}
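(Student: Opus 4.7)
The plan is to reduce the statement to the claim that restriction preserves the positive part of the Jordan decomposition: for every $\tau \in T_{\mathbb R}(B)$,
\[
\tau^+|_{\Ped(A)} \;=\; (\tau|_{\Ped(A)})^+.
\]
Once this is established, the explicit lattice formulas $\tau_1 \vee \tau_2 = (\tau_1 - \tau_2)^+ + \tau_2$ and $\tau_1 \wedge \tau_2 = \tau_1 - (\tau_1-\tau_2)^+$ from Corollary \ref{c:lattice} immediately show that restriction preserves joins and meets, and positivity of the restriction map is obvious. Before this, one needs the basic observation that restriction is well defined, i.e.~that $\Ped(A) \subseteq \Ped(B)$. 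This holds because $A \subseteq B$ is hereditary: any element of $\Ped(A)$ lies in a $\cs$-subalgebra of $A$ which is also a $\cs$-subalgebra of $\Ped(B)$ by Proposition \ref{p:upwards} (or, more directly, by Pedersen's identity $\Ped(A) = A \cap \Ped(B)$ for hereditary $A$).

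To prove the key equality, the tool is Proposition \ref{p:JordanPed}, which characterises the Jordan decomposition of a continuous self-adjoint trace as the unique decomposition whose restriction to each hereditary $\cs$-subalgebra of the relevant Pedersen ideal is a Jordan decomposition in the classical sense. The crux is a transitivity of heredity: if $E \subseteq \Ped(A)$ is a hereditary $\cs$-subalgebra of $A$, then because $A$ is hereditary in $B$, $E$ is also a hereditary $\cs$-subalgebra of $B$ contained in $\Ped(B)$. Applying Proposition \ref{p:JordanPed} to $\tau \in T_{\mathbb R}(B)$ gives that $(\tau^+|_E, \tau^-|_E)$ is the Jordan decomposition of $\tau|_E$; applying it to $\tau|_{\Ped(A)} \in T_{\mathbb R}(A)$ gives that $((\tau|_{\Ped(A)})^+|_E, (\tau|_{\Ped(A)})^-|_E)$ is also the Jordan decomposition of the same functional $\tau|_E$. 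Uniqueness of Jordan decompositions on $\cs$-algebras then forces the two decompositions to agree on $E$.

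Finally, since $\Ped(A)$ is the directed union of its hereditary $\cs$-subalgebras (Remark \ref{r:directed}), agreement on every such $E$ yields the desired equality on all of $\Ped(A)$. There is no real obstacle in the argument: all the substantive work has already been done in Lemma \ref{l:Jordan2} and Proposition \ref{p:JordanPed}, and the only conceptual point to verify is the transitivity of heredity, which is standard. The proof therefore reduces to carefully assembling these ingredients.
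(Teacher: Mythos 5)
Your proof is correct and takes essentially the same route as the paper, whose entire proof is the one-line citation of Lemma \ref{l:Jordan2}: you have simply spelled out the intended details, namely that transitivity of heredity together with Proposition \ref{p:JordanPed} and uniqueness of Jordan decompositions gives $\tau^+|_{\Ped(A)} = (\tau|_{\Ped(A)})^+$, from which compatibility with the lattice operations is immediate. No gaps.
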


\section{The dual trace space}

When  $A$ is a unital $\cs$-algebra there is a duality due to Kadison \cite{Kadison-duality} (see also \cite[Chapter 7]{Alfsen-book}) of the Choquet simplex $T_1(A)$ of tracial states and the Archimedian order unit space $\Aff T_1(A)$ of real affine continuous functions on $T_1(A)$. The space $T_{\mathbb R}(A)$ is a real topological vector space, so the right dual space consists of continuous linear functionals. We address how this relates to the space of affine functions on positive traces in Section \ref{s:Aff}.

\begin{definition}
Let $A$ be a $\cs$-algebra. We let $T_{\mathbb R}(A)^\ast$ denote the continuous dual space of $T_{\mathbb R}(A)$, i.e.~the space of weak$^\ast$-continuous functionals $T_{\mathbb R}(A) \to \mathbb R$. We equip $T_{\mathbb R}(A)^\ast$ with the compact-open topology, i.e.~a net $(f_i)_i$ in $T_{\mathbb R}(A)^\ast$ converges to $f$ if and only if it converges uniformly on weak$^\ast$-compact subsets of $T_{\mathbb R}(A)$. We also equip $T_{\mathbb R}(A)^\ast$ with a positive cone  $T_{\mathbb R}(A)^\ast_+$ consisting of the functionals $f$ such that $f(\tau) \geq 0$ for all $\tau\in T_{+}(A)$.

In this way, $T_{\mathbb R}(A)^\ast$ is an ordered topological vector space.
\end{definition}

For every compact subset $K \subseteq T_{\mathbb R}(A)$ we let $\| \cdot\|_K$ denote the seminorm on $T_{\mathbb R}(A)^\ast$ given by
\[
\| f \|_K = \sup_{\tau \in K} | f(\tau)| , \qquad \textrm{for } f\in T_{\mathbb R}(A)^\ast.
\]
Clearly the compact-open topology on $T_{\mathbb R}(A)^\ast$ is the locally convex topology induced by the family of seminorms $\| \cdot \|_K$ where $K$ ranges over the compact subsets of $T_{\mathbb R}(A)$.
In order to understand the  compact-open topology on $T_{\mathbb R}(A)^\ast$, we must first understand (relatively) compact subsets of $T_{\mathbb R}(A)$ as follows.

\begin{lemma}\label{l:compact}
For a subset $K\subseteq T_{\mathbb R}(A)$ the following are equivalent:
\begin{itemize}
\item[(i)] $K$ is relatively compact (i.e.~$K$ has compact closure);
\item[(ii)] every $f\in T_{\mathbb R}(A)^\ast$ is bounded on $K$;
\item[(iii)] $\sup_{\tau\in K} | \tau(a)| < \infty$ for all $a\in \Ped(A)$;
\item[(iv)] $\sup_{\tau\in K} \| \tau|_{D}\| < \infty$ for every $\cs$-subalgebra $D \subseteq \Ped(A)$.
\end{itemize}
\end{lemma}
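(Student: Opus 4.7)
My plan is to prove the chain of implications
(i) $\Rightarrow$ (ii) $\Rightarrow$ (iii) $\Rightarrow$ (iv) $\Rightarrow$ (i),
with the last being the only step that requires genuine work.

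The first two implications are essentially formal. For (i) $\Rightarrow$ (ii), if $K$ is relatively compact then $\overline{K}$ is compact, and any $f \in T_{\mathbb R}(A)^*$ is weak$^*$-continuous, hence bounded on $\overline{K}$. For (ii) $\Rightarrow$ (iii), observe that for each $a \in \Ped(A)$ the evaluation map $\ev_a \colon T_{\mathbb R}(A) \to \mathbb R$ given by $\tau \mapsto \mathrm{Re}\,\tau(a)$ lies in $T_{\mathbb R}(A)^*$ by the very definition of the weak$^*$-topology, and likewise for the imaginary part; applying (ii) to $\ev_a$ and its imaginary counterpart yields (iii).

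For (iii) $\Rightarrow$ (iv) I would invoke the uniform boundedness principle. Fix a $\cs$-subalgebra $D \subseteq \Ped(A)$. Each $\tau \in K$ is continuous on $\Ped(A)$ with its locally convex inductive limit topology, so the restriction $\tau|_D$ is a bounded linear functional on the Banach space $D$. By (iii), the family $\{\tau|_D : \tau \in K\} \subseteq D^*$ is pointwise bounded, and Banach--Steinhaus then forces it to be uniformly norm-bounded, which is (iv).

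The main step is (iv) $\Rightarrow$ (i), which I would prove by a Tychonoff argument. Identify $T_{\mathbb R}(A)$ with a subspace of $\mathbb R^{\Ped(A)_{\mathrm{sa}}}$ equipped with the product topology; this is exactly the weak$^*$-topology. Given any $a \in \Ped(A)_{\mathrm{sa}}$, the $\cs$-subalgebra $D_a := \cs(a) \subseteq \Ped(A)$ (which lies in $\Ped(A)$ by Lemma \ref{l:Pedher}) satisfies $|\tau(a)| \leq \|\tau|_{D_a}\| \cdot \|a\|$, so (iv) gives a uniform bound $M_a := \sup_{\tau \in K} |\tau(a)| < \infty$. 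Hence $K$ is contained in the compact box $\prod_{a} [-M_a, M_a]$, so its closure $\overline{K}$ in $\mathbb R^{\Ped(A)_{\mathrm{sa}}}$ is compact. It remains to verify that $\overline{K} \subseteq T_{\mathbb R}(A)$: linearity and the trace identity $\tau(x^*x) = \tau(xx^*)$ are pointwise conditions on $\Ped(A)_{\mathrm{sa}}$ and therefore survive pointwise limits, so any $\tau_0 \in \overline{K}$ is a self-adjoint tracial functional. For continuity, note that for each $\cs$-subalgebra $D \subseteq \Ped(A)$ the uniform bound from (iv) passes to the pointwise limit, giving $\|\tau_0|_D\| \leq \sup_{\tau \in K} \|\tau|_D\| < \infty$; by the characterisation of continuous functionals on $\Ped(A)$ (Definition \ref{d:locconvex}), $\tau_0$ lies in $T_{\mathbb R}(A)$. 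The expected obstacle in this step is confirming that the relevant algebraic and boundedness conditions are closed under pointwise limits, which in the present setup works cleanly because the topology on $\Ped(A)$ is tested exactly on its $\cs$-subalgebras.
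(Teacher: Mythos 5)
Your proof is correct and follows essentially the same route as the paper: the first two implications are handled identically, (iii) $\Rightarrow$ (iv) is Banach--Steinhaus in both cases, and your Tychonoff/product-space argument for (iv) $\Rightarrow$ (i) is just a repackaging of the paper's ultranet argument (define the pointwise limit, check it is a self-adjoint trace, and use (iv) to verify boundedness on each $\cs$-subalgebra and hence continuity). No gaps.
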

\begin{proof}
We let $\overline{K}$ denote the closure of $K$.

(i) $\Rightarrow$ (ii): $f(\overline K)\subseteq \mathbb R$ is compact, hence bounded.

(ii) $\Rightarrow$ (iii): It suffices to prove (iii) for self-adjoint $a\in \Ped(A)$, and this case reduces to (ii) since $\tau \mapsto \tau(a)$ defines an element of $T_{\mathbb R}(A)^\ast$.

(iii) $\Rightarrow$ (iv): This follows from the uniform boundedness principle.

(iv) $\Rightarrow$ (i): Let $(\tau_i)_i$ be an ultranet in $\overline K$. Note that (iii) clearly follows from (iv) and hence 
\[
\sup_{\tau\in \overline K} |\tau(a)| = \sup_{\tau \in K} |\tau(a) | <\infty
\]
for all $a\in \Ped(A)$. Thus we may define a linear functional $\tau_0 \colon \Ped(A) \to \mathbb C$ by $\tau_0(a) = \lim_i \tau_i(a)$. Let $D \subseteq \Ped(A)$ be a $\cs$-subalgebra. By (iv)  $\tau_0|_{D}$ is bounded, and thus $\tau_0$ is continuous. Hence $\tau_0 \in T_{\mathbb R}(A)$ and by construction $\tau_i \to \tau_0$, so $\overline K$ is compact.
\end{proof}

Note that there is a canonical linear map $\Ped(A)_{\mathrm{sa}} \to T_{\mathbb R}(A)^\ast$ given by $a \mapsto \hat a$ where $\hat a(\tau) = \tau(a)$ for $\tau\in T_{\mathbb R}(A)$. Clearly $\hat a \in T_{\mathbb R}(A)^\ast_+$ whenever $a\in \Ped(A)_+$. We emphasise for the following theorem that $\Ped(A)$ is considered with the inductive limit topology (Definition \ref{d:locconvex}) and $T_{\mathbb R}(A)^\ast$ is equipped with the compact-open topology.

\begin{theorem}\label{t:PedTcont}
The map $\Ped(A)_{\mathrm{sa}} \to T_{\mathbb R}(A)^\ast$ is a quotient map of locally convex spaces with respect to the topology on $\Ped(A)$ given in Definition \ref{d:locconvex} and the compact-open topology on $T_{\mathbb R}(A)^\ast$.
\end{theorem}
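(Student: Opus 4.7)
The goal is to show $q \colon \Ped(A)_{\mathrm{sa}} \to T_{\mathbb R}(A)^\ast$ is continuous, surjective, and open. For continuity, given a weak$^\ast$-compact $K \subseteq T_{\mathbb R}(A)$ and a $\cs$-subalgebra $D \subseteq \Ped(A)$, Lemma \ref{l:compact}(iv) yields $M_D := \sup_{\tau \in K}\|\tau|_{D}\| < \infty$, so $\|\hat a\|_K = \sup_{\tau \in K}|\tau(a)| \leq M_D\|a\|$ for $a \in D_{\mathrm{sa}}$; the universal property of the inductive limit gives continuity on all of $\Ped(A)_{\mathrm{sa}}$. Surjectivity is Observation \ref{o:trace}: a weak$^\ast$-continuous $f \in T_{\mathbb R}(A)^\ast$ satisfies $|f(\tau)| \leq C\max_i|\tau(a_i)|$ for some $a_1,\dots,a_n \in \Ped(A)_{\mathrm{sa}}$, so $f$ vanishes on $\bigcap_i \ker \hat{a_i}$ and a standard linear-algebra argument on the finite-dimensional quotient produces real scalars $c_i$ with $f = \sum_i c_i \hat{a_i} = \widehat{\sum_i c_i a_i}$.

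The main content is openness: for each absolutely convex open zero-neighborhood $U \subseteq \Ped(A)_{\mathrm{sa}}$, I must exhibit a compact-open zero-neighborhood contained in $q(U)$. Put $K := U^\circ = \{\tau \in T_{\mathbb R}(A) : |\tau(a)| \leq 1 \text{ for all } a \in U\}$. Because $U \cap D_{\mathrm{sa}}$ contains a norm-ball of some radius $r_D > 0$, every $\tau \in K$ satisfies $\|\tau|_{D}\| \leq 1/r_D$, so $K$ is weak$^\ast$-compact by Lemma \ref{l:compact}. The compact-open zero-neighborhood $V := \{f \in T_{\mathbb R}(A)^\ast : \|f\|_K \leq 1\}$ satisfies $q^{-1}(V) = K^\circ = U^{\circ\circ}$. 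Since $q$ is surjective, the openness goal $(1+\epsilon)^{-1}V \subseteq q(U)$ reduces to the inclusion $U^{\circ\circ} \subseteq (1+\epsilon)(U+N)$, where $N := \ker q$.

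For this inclusion, pass to the Hausdorff quotient $\Ped(A)_{\mathrm{sa}}/N$ with its quotient topology and identify its continuous dual with $T_{\mathbb R}(A)$: any continuous functional $h$ on $\Ped(A)_{\mathrm{sa}}$ vanishing on $N$ automatically vanishes on every self-adjoint commutator $i[x,y] \in N$, and so extends complex-linearly to a self-adjoint trace on $\Ped(A)$. Letting $\pi$ denote the quotient map and $\tilde p$ the quotient seminorm of the Minkowski gauge $p_U$, we have $\overline{\pi(U)} = \{y : \tilde p(y) \leq 1\}$, a closed convex set in a Hausdorff locally convex space, hence $\sigma(\Ped(A)_{\mathrm{sa}}/N, T_{\mathbb R}(A))$-closed by Mazur's theorem. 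Consequently $U^{\circ\circ} = \pi^{-1}(\{y : \tilde p(y) \leq 1\}) = \{a \in \Ped(A)_{\mathrm{sa}} : \inf_{n \in N}p_U(a+n) \leq 1\}$, and for any such $a$ there is $n \in N$ with $p_U(a+n) < 1+\epsilon$, so $a \in (1+\epsilon)U + N = (1+\epsilon)(U+N)$ as needed.

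The main obstacle lies precisely in this last step: applying the bipolar theorem naively to the (possibly non-separating) pairing $(\Ped(A)_{\mathrm{sa}}, T_{\mathbb R}(A))$ only identifies $U^{\circ\circ}$ as the $\sigma(\Ped(A)_{\mathrm{sa}}, T_{\mathbb R}(A))$-closure of $U$, and this closure can in principle be strictly larger than $U + N$. The circumvention is to pass to the Hausdorff quotient $\Ped(A)_{\mathrm{sa}}/N$, identify its continuous dual with $T_{\mathbb R}(A)$, and then invoke Mazur's theorem, which lets the $\sigma(\Ped(A)_{\mathrm{sa}}/N, T_{\mathbb R}(A))$-closure of $\pi(U)$ be computed in the quotient topology and hence controlled by the Minkowski gauge of $U$.
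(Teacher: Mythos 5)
Your proof is correct, and it handles continuity and surjectivity exactly as the paper does; the difference lies in how you establish the quotient property. You prove openness directly: starting from an absolutely convex open zero-neighbourhood $U$, you form the polar $K=U^{\circ}\subseteq T_{\mathbb R}(A)$ (compact by Lemma \ref{l:compact}), reduce to the inclusion $U^{\circ\circ}\subseteq(1+\epsilon)(U+\ker q)$, and obtain it by passing to the Hausdorff quotient $\Ped(A)_{\mathrm{sa}}/\ker q$, identifying its continuous dual with $T_{\mathbb R}(A)$ (via $x^{\ast}x-xx^{\ast}\in\ker q$), and invoking the bipolar theorem together with Mazur's theorem. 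The paper instead argues with seminorms: for a seminorm $\rho$ on $T_{\mathbb R}(A)^{\ast}$ whose pullback $\tilde\rho$ is continuous, Hahn--Banach gives the exact identity $\tilde\rho(a)=\sup\{|\tau(a)|:|\tau|\le\tilde\rho\}$, the dominated functionals are automatically tracial (for the same reason your quotient dual equals $T_{\mathbb R}(A)$) and form a compact set $K$, whence $\rho(\hat a)=\|\hat a\|_{K}$. The duality content is the same -- both arguments ultimately rest on Hahn--Banach and on the observation that functionals killed on $\ker q$ are traces -- but the paper's version gets exact attainment for each fixed $a$ and therefore never meets the closure subtlety of the non-separating pairing that you correctly identify and circumvent with Mazur's theorem; your route pays for this with the $(1+\epsilon)$ fudge factor, which disappears only after a final limiting step. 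On the other hand, your argument makes the role of the kernel $N$ and of the bipolar theorem explicit, and the exact formula $\rho(\hat a)=\|\hat a\|_{K}$ that the paper extracts and reuses in Corollary \ref{c:seminorm} and Corollary \ref{c:norm} is still recoverable from your inclusions by letting $\epsilon\to 0$.
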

\begin{proof}
Surjectivity is elementary, see for instance \cite[Proposition 2.4.4]{Pedersen-book-analysisnow}.

Next we show continuity. By definition of the locally convex direct limit topology on $\Ped(A)_{\mathrm{sa}}$, this amounts to showing that the restriction $D_{\mathrm{sa}} \to T_{\mathbb R}(A)^\ast$ is continuous (in the norm topology on $D_{\mathrm{sa}}$) for every $\cs$-subalgebra $D\subseteq \Ped(A)$. Let $(a_i)_i$ be a net in $D_{\mathrm{sa}}$ converging to $a$, and let $S\subseteq T_{\mathbb R}(A)$ be a compact subset. By Lemma \ref{l:compact} $M:= \sup_{\tau\in S} \| \tau|_D\| < \infty$, and thus
\begin{equation}
\sup_{\tau\in S} | \hat a_i(\tau) - \hat a(\tau)| = \sup_{\tau\in S} | \tau(a_i - a)| \leq \| a_i - a\| M \to 0. 
\end{equation}
To demonstrate that $T_{\mathbb R}(A)^\ast$ carries the quotient topology, we show that a seminorm $\rho \colon T_{\mathbb R}(A)^\ast \to [0,\infty)$ for which the pre-composition $\tilde \rho \colon \Ped(A)_{\mathrm{sa}} \to [0,\infty)$ is continuous must itself be continuous.  
By the Hahn--Banach theorem, for any $a \in \Ped(A)_{\mathrm{sa}}$ we have 
\[ \tilde \rho(a) = \sup \{ |\tau(a)| : \tau \in (\Ped(A)_{\mathrm{sa}})^\ast, |\tau| \leq \tilde \rho \}. \]
For $\tau \in (\Ped(A)_{\mathrm{sa}})^\ast$, the condition $|\tau| \leq \tilde \rho$ forces $\tau$ to be tracial and therefore says precisely that $\tau$ is in the compact set $K \subseteq T_{\mathbb R}(A)$ given by
\[
K = \{ \tau \in T_{\mathbb R}(A) : |\tau(a)| \leq \rho(\hat a) \text{ for each } a \in \Ped(A)_{\mathrm{sa}} \}.
\]
Thus for each $a \in \Ped(A)_{\mathrm{sa}}$ we have
\[ \rho(\hat a) = \tilde \rho (a) = \sup_{\tau \in K} |\tau(a)| = \| \hat a \|_K, \]
and so $\rho$ is continuous with respect to the compact open topology.
\end{proof}

\begin{corollary}\label{c:seminorm}
    Let $A$ be a $\cs$-algebra and let $\rho \colon \Ped(A)\to [0,\infty)$ be a seminorm such that the restriction $\rho|_D$ is bounded for every $\cs$-subalgebra $D\subseteq \Ped(A)$. 
    Let
    \[
    K_\rho := \{ \tau \in T_{\mathbb R}(A) : |\tau(a)| \leq \rho(b) \textrm{ for all } a,b\in \Ped(A)_{\mathrm{sa}} \textrm{ such that }\hat a = \hat b\}.
    \]
    Then $K_\rho$ is compact, and for every $f\in T_{\mathbb R}(A)^\ast$ and every $\epsilon>0$ there exists an $a\in \Ped(A)_{\mathrm{sa}}$ such that $\hat a = f$ and $\rho(a) \leq \| f\|_{K_{\rho}} +\epsilon$. 
\end{corollary}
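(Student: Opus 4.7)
The plan is to first simplify the defining condition of $K_\rho$, then establish compactness directly from Lemma~\ref{l:compact}, and finally deduce the approximation statement by identifying $\|\cdot\|_{K_\rho}$ with the quotient seminorm on $T_{\mathbb R}(A)^\ast$ via the Hahn--Banach argument used in the proof of Theorem~\ref{t:PedTcont}.

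As a preliminary, note that for any $\tau \in T_{\mathbb R}(A)$ and $a,b \in \Ped(A)_{\mathrm{sa}}$, the condition $\hat a = \hat b$ automatically forces $\tau(a) = \tau(b)$. Hence the defining condition of $K_\rho$ collapses to $|\tau(a)| \leq \rho(a)$ for all $a \in \Ped(A)_{\mathrm{sa}}$, and $K_\rho$ is manifestly weak-$*$-closed as an intersection of closed sets. To verify condition~(iv) of Lemma~\ref{l:compact}, I would fix a $\cs$-subalgebra $D \subseteq \Ped(A)$ and let $C_D$ be a bound for $\rho$ on the unit ball of $D$; for $\tau \in K_\rho$ and self-adjoint $a$ in the unit ball of $D$, $|\tau(a)| \leq \rho(a) \leq C_D$, which yields a uniform bound on $\|\tau|_D\|$ and hence the compactness of $K_\rho$.

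For the approximation statement, I would introduce the quotient seminorm $\bar\rho \colon T_{\mathbb R}(A)^\ast \to [0,\infty)$ given by $\bar\rho(f) = \inf\{\rho(a) : \hat a = f\}$. Since $\bar\rho \circ \hat{\cdot} \leq \rho$ on $\Ped(A)_{\mathrm{sa}}$, it is continuous, and by Theorem~\ref{t:PedTcont} $\bar\rho$ itself is continuous in the compact-open topology. The inequality $\|f\|_{K_\rho} \leq \bar\rho(f)$ is immediate from the definition of $K_\rho$. For the reverse inequality I would apply the Hahn--Banach formula
\[ (\bar\rho \circ \hat{\cdot})(a) = \sup\bigl\{|\sigma(a)| : \sigma \in (\Ped(A)_{\mathrm{sa}})^\ast,\ |\sigma| \leq \bar\rho \circ \hat{\cdot}\bigr\} \]
on $\Ped(A)_{\mathrm{sa}}$ and argue, following the proof of Theorem~\ref{t:PedTcont}, that every such $\sigma$ lies in $K_\rho$. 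This yields $\bar\rho(\hat a) = \|\hat a\|_{K_\rho}$; consequently, given any $f \in T_{\mathbb R}(A)^\ast$ and $\epsilon > 0$, the infimum defining $\bar\rho(f)$ supplies some $a \in \Ped(A)_{\mathrm{sa}}$ with $\hat a = f$ and $\rho(a) < \bar\rho(f) + \epsilon = \|f\|_{K_\rho} + \epsilon$.

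The main obstacle is identifying the Hahn--Banach functionals $\sigma$ with elements of $K_\rho$. One must check that $\sigma$ factors through the map $a \mapsto \hat a$ (because $\bar\rho \circ \hat{\cdot}$ vanishes on its kernel) and is tracial (because $\widehat{xy-yx} = 0$ for self-adjoint commutators, so $\bar\rho \circ \hat{\cdot}$ also vanishes on them), and that the resulting functional is continuous and extends uniquely to a self-adjoint trace on $\Ped(A)$. This is essentially the same argument already carried out in the proof of Theorem~\ref{t:PedTcont}, so the present corollary is in effect a reformulation of that theorem tailored to general continuous seminorms on $\Ped(A)_{\mathrm{sa}}$.
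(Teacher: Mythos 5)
Your proposal is correct and follows essentially the same route as the paper: the paper also observes that $\rho$ is continuous for the inductive limit topology, passes to the quotient seminorm $\hat\rho(f)=\inf\{\rho(a):\hat a=f\}$ on $T_{\mathbb R}(A)^\ast$, identifies $K_\rho$ with the set $K$ appearing in the proof of Theorem~\ref{t:PedTcont}, and reads off $\|f\|_{K_\rho}=\hat\rho(f)$ from the Hahn--Banach argument there. Your version merely spells out the details the paper cites (the simplification of the defining condition of $K_\rho$, the compactness check via Lemma~\ref{l:compact}(iv), and the verification that the Hahn--Banach functionals are tracial), all of which are correct.
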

\begin{proof}
    Note that $\rho$ is continuous in the locally convex inductive limit topology (Definition \ref{d:locconvex}). Therefore, by Theorem \ref{t:PedTcont}, the  induced quotient seminorm $\hat \rho \colon T_{\mathbb R}(A)^\ast \to [0,\infty)$ given by
    \[
    \hat \rho(f) = \inf\{  \rho(a) : a\in \Ped(A)_{\mathrm{sa}} \textrm{ such that }\hat a = f \}
    \]
    is continuous, and the set $K_\rho$ is the set named $K$ (with respect to $\hat \rho$) in the proof of Theorem \ref{t:PedTcont}. Hence, as seen in this proof, $\| f\|_{K_\rho} = \hat \rho(f)$ so the result follows.
\end{proof}

In the following, $T_{\leq 1}(A) \subseteq T_{\mathbb R}(A)$ denotes the set of contractive positive traces. We note that $\tau(a) = f(\tau)$ for \emph{all} traces $\tau\in T_{\mathbb R}(A)$ below, not just bounded traces. 

\begin{corollary}\label{c:norm}
    Let $A$ be a $\cs$-algebra. For any $f\in T_{\mathbb R}(A)^\ast$ and any $\epsilon >0$ there is an $a\in \Ped(A)_{\mathrm{sa}}$ such that $f = \hat a$ and $\| a\| \leq \sup_{\tau\in T_{\leq 1}(A)} |f(\tau)| + \epsilon$. 
\end{corollary}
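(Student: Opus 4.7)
The plan is to apply Corollary \ref{c:seminorm} with the seminorm $\rho = \|\cdot\|$ equal to the $\cs$-norm on $\Ped(A)$. This is a valid choice of $\rho$: for every $\cs$-subalgebra $D \subseteq \Ped(A)$, the restriction $\rho|_D$ is just the norm of $D$, so trivially $\rho(x) \leq \|x\|$ on $D$ and hence $\rho|_D$ is bounded in the sense required by Corollary \ref{c:seminorm}. The corollary therefore produces $a \in \Ped(A)_{\mathrm{sa}}$ with $\hat a = f$ and $\|a\| = \rho(a) \leq \|f\|_{K_\rho} + \epsilon$, so it remains only to show $\|f\|_{K_\rho} \leq \sup_{\tau \in T_{\leq 1}(A)} |f(\tau)|$.

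To bound $\|f\|_{K_\rho}$, I would analyse $K_\rho$ concretely. Since every $\tau \in T_{\mathbb R}(A)$ satisfies $\tau(a) = \tau(b)$ whenever $\hat a = \hat b$, the defining condition of $K_\rho$ reduces to: $|\tau(a)| \leq \|a\|$ for all $a \in \Ped(A)_{\mathrm{sa}}$. Any such $\tau$ is norm-$\leq 1$ on each $\cs$-subalgebra $D \subseteq \Ped(A)$ and, since $\Ped(A)$ is dense in $A$, extends by continuity to a self-adjoint tracial functional of norm $\leq 1$ on $A$.

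Applying the Jordan decomposition on $A$ together with Proposition \ref{p:Jordan1}(a), this extension is of the form $\tau^+ - \tau^-$ with $\tau^\pm$ positive traces on $A$ satisfying $\|\tau^+\| + \|\tau^-\| \leq 1$. Restricting back to $\Ped(A)$, we obtain positive traces $\tau^\pm \in T_+(A)$ with the same norm bound, and in particular $\tau^\pm \in T_{\leq 1}(A)$ if nonzero (and trivially contribute nothing if zero). Using linearity of $f$ and rescaling,
\[
|f(\tau)| \leq |f(\tau^+)| + |f(\tau^-)| \leq \bigl(\|\tau^+\| + \|\tau^-\|\bigr) \sup_{\tau' \in T_{\leq 1}(A)} |f(\tau')| \leq \sup_{\tau' \in T_{\leq 1}(A)} |f(\tau')|.
\]
Taking the supremum over $\tau \in K_\rho$ yields $\|f\|_{K_\rho} \leq \sup_{\tau' \in T_{\leq 1}(A)} |f(\tau')|$, which combined with Corollary \ref{c:seminorm} gives the desired $a$. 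The main conceptual point is the passage from self-adjoint norm-$\leq 1$ traces (which is what the condition $|\tau(a)| \leq \|a\|$ cuts out) to positive contractive traces via the Jordan decomposition; everything else is bookkeeping.
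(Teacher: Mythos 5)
Your proposal is correct and follows essentially the same route as the paper: apply Corollary \ref{c:seminorm} with the $\cs$-norm, identify $K_{\|\cdot\|}$ as the self-adjoint traces of norm at most $1$, and use the Jordan decomposition to show that the supremum of $|f|$ over this set equals the supremum over $T_{\leq 1}(A)$. You have merely spelled out the steps the paper labels ``immediate to check,'' and your details (the reduction of the defining condition of $K_\rho$ to $|\tau(a)|\leq\|a\|$, and the norm-additivity $\|\tau^+\|+\|\tau^-\|=\|\tau\|$ in the rescaling estimate) are all sound.
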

\begin{proof}
    Applying Corollary \ref{c:seminorm} with the usual norm $\| \cdot \|$ on $\Ped(A)$,  it is immediate to check that the set $K_{\| \cdot\|}$  is the set of traces of norm at most $1$. As $\| \cdot \|_{K_{\|\cdot\|}} = \| \cdot \|_{T_{\leq 1}(A)}$ the result follows.
\end{proof}

\begin{remark}
    Suppose $A$ is a $\cs$-algebra whose non-zero traces are all unbounded.  From Corollary \ref{c:norm} it follows that every $f\in T_{\mathbb R}(A)^\ast$ can be realised as $f = \hat a$ for a self-adjoint $a\in \Ped(A)_{\mathrm{sa}}$ with arbitrarily small norm. This is easy to see if $A$ is stable, but was somewhat surprising to the authors in general.
\end{remark}

We have a duality theory for $T_{\mathbb R}(A)$ and its dual $T_{\mathbb R}(A)^\ast$. 
The following result shows how one reconstructs $T_{\mathbb R}(A)$ (including the topology and the positive cone) from $T_{\mathbb R}(A)^\ast$. Note that this would be immediate if we used the weak$^\ast$-topology on $T_{\mathbb R}(A)^\ast$ through the isomorphism $T_\mathbb R(A) \cong (T_{\mathbb R}(A)^\ast, w^\ast)^\ast$. Hence, for the sake of clarity in the statement of the following proposition, we emphasise the compact--open topology $\mathrm{c.o.}$ on $T_{\mathbb R}(A)^*$. We consider $(T_{\mathbb R}(A)^\ast, {\mathrm{c.o.}})^\ast$ as an ordered topological vector space when equipped with the weak$^\ast$-topology and the positive cone of functionals $g\in (T_{\mathbb R}(A)^\ast, {\mathrm{c.o.}})^\ast$ for which $g(T_{\mathbb R}(A)^\ast_+) \subseteq [0,\infty)$.

\begin{proposition}\label{p:dualiso}
Let $A$ be a $\cs$-algebra. 
The canonical map $T_{\mathbb R}(A) \to (T_{\mathbb R}(A)^\ast, \mathrm{c.o.})^\ast$
is an isomorphism of ordered topological vector spaces.
\end{proposition}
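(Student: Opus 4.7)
The plan is to construct $\Phi \colon T_{\mathbb R}(A) \to (T_{\mathbb R}(A)^\ast, \mathrm{c.o.})^\ast$ as the evaluation map $\Phi(\tau)(f) := f(\tau)$, and then verify that it is a bijective, bicontinuous, order isomorphism. Well-definedness is immediate since pointwise evaluation at a single $\tau$ is compact-open continuous (singletons are compact). Preservation of the positive cones in both directions follows from observing that $\hat a \in T_{\mathbb R}(A)^\ast_+$ for every $a \in \Ped(A)_+$: then $\Phi(\tau) \geq 0$ if and only if $\tau(a) = \Phi(\tau)(\hat a) \geq 0$ for all such $a$, if and only if $\tau \in T_+(A)$.

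Next I would handle the topological points, which are direct unfoldings of the definitions. The weak$^\ast$-topologies on $T_{\mathbb R}(A)$ and on $(T_{\mathbb R}(A)^\ast, \mathrm{c.o.})^\ast$ both reduce to pointwise convergence against elements of the respective predual, so $\tau_i \to \tau$ weak$^\ast$ yields $\Phi(\tau_i)(f) = f(\tau_i) \to f(\tau) = \Phi(\tau)(f)$ by weak$^\ast$-continuity of $f$, while conversely $\Phi(\tau_i) \to \Phi(\tau)$ weak$^\ast$ tested against $\hat a$ gives $\tau_i(a) \to \tau(a)$ for every $a \in \Ped(A)_{\mathrm{sa}}$. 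Injectivity follows immediately from the fact that the family $\{\hat a : a \in \Ped(A)_{\mathrm{sa}}\}$ separates points of $T_{\mathbb R}(A)$.

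The main step, and the one I expect to demand the most care, is surjectivity. Given $g \in (T_{\mathbb R}(A)^\ast, \mathrm{c.o.})^\ast$, I would form the composition $\psi := g \circ (\cdot)^\wedge \colon \Ped(A)_{\mathrm{sa}} \to \mathbb R$, which is continuous for the inductive limit topology on the domain by Theorem \ref{t:PedTcont}. Extending $\mathbb C$-linearly produces a continuous self-adjoint functional $\tilde \psi$ on $\Ped(A)$; the difficulty is that there is no a priori reason for $\tilde \psi$ to be tracial just because $\psi$ factors through $T_{\mathbb R}(A)^\ast$. The observation that resolves this is that for each $x \in \Ped(A)$, the commutator $x^\ast x - xx^\ast$ lies in $\Ped(A)_{\mathrm{sa}}$ and is annihilated by every $\tau \in T_{\mathbb R}(A)$, so $(x^\ast x - xx^\ast)^\wedge = 0$ in $T_{\mathbb R}(A)^\ast$ and hence $\tilde \psi(x^\ast x - xx^\ast) = g(0) = 0$. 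Thus $\tilde \psi \in T_{\mathbb R}(A)$, and using the surjectivity of $(\cdot)^\wedge$ afforded by Theorem \ref{t:PedTcont}, every $f \in T_{\mathbb R}(A)^\ast$ has the form $\hat a$ for some $a \in \Ped(A)_{\mathrm{sa}}$, giving $g(f) = \psi(a) = \tilde \psi(a) = \Phi(\tilde \psi)(f)$, as required.
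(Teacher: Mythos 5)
Your proposal is correct and follows essentially the same route as the paper: well-definedness from compact-open convergence implying pointwise convergence, injectivity from the separation of points afforded by Theorem \ref{t:PedTcont}, surjectivity by composing a functional $g$ with the quotient map $\Ped(A)_{\mathrm{sa}} \to T_{\mathbb R}(A)^\ast$ and extending linearly, and positivity checked against the elements $\hat a$ for $a \in \Ped(A)_+$. Your explicit verification that the extension is tracial -- via $(x^\ast x - xx^\ast)^\wedge = 0$ -- is a detail the paper leaves implicit, and it is a welcome addition.
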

\begin{proof}
Let $\omega \colon T_{\mathbb R}(A) \to (T_{\mathbb R}(A)^{\ast}, {\mathrm{c.o.}})^\ast$ denote the canonical map (which is clearly well-defined since any net in $T_{\mathbb R}(A)^{\ast}$ which converges in the compact-open topology also converges pointwise). The functionals in $T_{\mathbb R}(A)^\ast$ separate the points of $T_{\mathbb R}(A)$ by Theorem \ref{t:PedTcont}, so $\omega$ is injective. For surjectivity, when $h \colon T_{\mathbb R}(A)^\ast \to \mathbb R$ is a continuous functional, so is its composition with the map $\Ped(A)_{\mathrm{sa}} \to T_{\mathbb R}(A)^\ast$ by Theorem \ref{t:PedTcont}. This composition extends by linearity to a self-adjoint continuous trace $\tau$ on $\Ped(A)$ satisfying $\omega(\tau) = h$, so $\omega$ is a bijection.

That $\omega$ is a homeomorphism follows straightforwardly from Theorem \ref{t:PedTcont}. 

Finally if $\tau\in T_+(A)$ then $\omega(\tau)(f) = f(\tau) \geq 0$ for every $f\in T_{\mathbb R}(A)^\ast_+$ (by definition of $T_{\mathbb R}(A)^\ast_+$). Hence $\omega(\tau)$ is positive. Conversely, if $\tau\in T_{\mathbb R}(A)$ such that $\omega(\tau)$ is positive, and if $a\in \Ped(A)_+$, then $\hat a \in T_{\mathbb R}(A)^\ast_+$ and therefore $\tau(a) = \omega(\tau)(\hat a) \geq 0$, so $\tau \in T_+(A)$.
\end{proof}

\begin{theorem}\label{t:duality}
Let $A$ and $B$ be $\cs$-algebras. Taking duals induces a one-to-one correspondence between continuous linear (positive) maps $T_{\mathbb R}(B) \to T_{\mathbb R}(A)$ and continuous linear (positive) maps $T_{\mathbb R}(A)^\ast \to T_{\mathbb R}(B)^\ast$. 
\end{theorem}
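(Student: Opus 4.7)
The plan is to exhibit the correspondence explicitly by the usual ``transpose'' construction on both sides, with the crucial tool being the ordered topological vector space isomorphism $\omega_X \colon T_{\mathbb R}(X) \xrightarrow{\cong} (T_{\mathbb R}(X)^\ast, \mathrm{c.o.})^\ast$ from Proposition \ref{p:dualiso} for $X = A,B$ (where $\omega_X(\tau)(f) = f(\tau)$). Given $\phi \colon T_{\mathbb R}(B) \to T_{\mathbb R}(A)$ continuous linear, define its dual by $\phi^\vee(f) := f \circ \phi$ for $f \in T_{\mathbb R}(A)^\ast$. Conversely, given $\psi \colon T_{\mathbb R}(A)^\ast \to T_{\mathbb R}(B)^\ast$ continuous linear (in the compact-open topologies), define $\psi^\vee \colon T_{\mathbb R}(B) \to T_{\mathbb R}(A)$ by $\psi^\vee(\tau) := \omega_A^{-1}(\omega_B(\tau) \circ \psi)$, i.e.\ the unique trace such that $f(\psi^\vee(\tau)) = \psi(f)(\tau)$ for every $f \in T_{\mathbb R}(A)^\ast$ and $\tau \in T_{\mathbb R}(B)$.

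First I would check well-definedness. For $\phi^\vee$, the composition $f \circ \phi$ is weak$^\ast$-continuous since $\phi$ is weak$^\ast$-continuous and $f$ is, so it lies in $T_{\mathbb R}(B)^\ast$. For $\psi^\vee$, the composition $\omega_B(\tau) \circ \psi$ is compact-open continuous by functoriality, hence by Proposition \ref{p:dualiso} lies in the image of $\omega_A$. Next I would verify continuity. For $\phi^\vee$: a continuous linear map between weak$^\ast$-Hausdorff spaces sends compact sets to compact sets, so for any compact $K \subseteq T_{\mathbb R}(B)$ the image $\phi(K)$ is compact in $T_{\mathbb R}(A)$, and $\|\phi^\vee(f_i) - \phi^\vee(f)\|_K = \|f_i - f\|_{\phi(K)} \to 0$ whenever $f_i \to f$ in the compact-open topology. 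For $\psi^\vee$: if $\tau_i \to \tau$ weak$^\ast$ in $T_{\mathbb R}(B)$, then for any $a \in \Ped(A)_{\mathrm{sa}}$ we have $\psi^\vee(\tau_i)(a) = \hat a(\psi^\vee(\tau_i)) = \psi(\hat a)(\tau_i) \to \psi(\hat a)(\tau) = \psi^\vee(\tau)(a)$, since $\psi(\hat a) \in T_{\mathbb R}(B)^\ast$ is weak$^\ast$-continuous.

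Then I would verify that the two assignments are mutually inverse. Given $\phi$, for any $\tau \in T_{\mathbb R}(B)$ and $f \in T_{\mathbb R}(A)^\ast$ one has $f((\phi^\vee)^\vee(\tau)) = \phi^\vee(f)(\tau) = f(\phi(\tau))$, so $(\phi^\vee)^\vee = \phi$ by injectivity of $\omega_A$. Given $\psi$, for $f \in T_{\mathbb R}(A)^\ast$ and $\tau \in T_{\mathbb R}(B)$ one has $(\psi^\vee)^\vee(f)(\tau) = f(\psi^\vee(\tau)) = \psi(f)(\tau)$, giving $(\psi^\vee)^\vee = \psi$. Finally, the positive variant is handled directly: if $\phi$ is positive and $f \in T_{\mathbb R}(A)^\ast_+$, $\tau \in T_+(B)$, then $\phi^\vee(f)(\tau) = f(\phi(\tau)) \geq 0$; conversely if $\phi^\vee$ is positive, then for $\tau \in T_+(B)$ and $a \in \Ped(A)_+$ we use $\hat a \in T_{\mathbb R}(A)^\ast_+$ (part of the ordered structure recorded in Proposition \ref{p:dualiso}) to obtain $\phi(\tau)(a) = \phi^\vee(\hat a)(\tau) \geq 0$, so $\phi(\tau) \in T_+(A)$. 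The analogous argument for $\psi$ uses positivity of $\omega_A$ in both directions.

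The only nontrivial input is Proposition \ref{p:dualiso}, which lets us identify $T_{\mathbb R}(A)$ with the compact-open-continuous functionals on $T_{\mathbb R}(A)^\ast$ and transfers positivity. Once that is in hand, the main obstacle is just the routine (but easy to bungle) verification that weak$^\ast$-continuity of $\phi$ yields compact-open-continuity of $\phi^\vee$; this relies on the fact that continuous maps preserve compact sets, which is the only place where the precise topology on $T_{\mathbb R}(A)^\ast$ enters nontrivially.
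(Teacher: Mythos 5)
Your proposal is correct and follows essentially the same route as the paper: both directions of the correspondence are given by the transpose construction, with compact-open continuity of the forward dual obtained from the fact that continuous maps preserve compact sets, and well-definedness of the reverse dual obtained from Proposition \ref{p:dualiso} (which the paper unpacks slightly more explicitly via Theorem \ref{t:PedTcont}). The positivity and mutual-inverse verifications also match the paper's.
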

\begin{proof}
    We will use Theorem \ref{t:PedTcont} several times to identify elements of $T_{\mathbb R}(D)^\ast$ with self-adjoint elements in $\Ped(D)$. 
    
    Let $\gamma \colon T_{\mathbb R}(B) \to T_{\mathbb R}(A)$ be a continuous linear (positive) map. We claim that $\gamma^\ast \colon T_{\mathbb R}(A)^\ast \to T_{\mathbb R}(B)^\ast$ given by
    \begin{equation}
        \gamma^\ast(\hat a) (\tau_B) = \gamma(\tau_B)(a)
    \end{equation}
    for $a\in \Ped(A)_{\mathrm{sa}}$ and $\tau_B\in T_{\mathbb R}(B)$ is well-defined, continuous (and positive). 

    To see that it is well-defined, it suffices to show for $a\in \Ped(A)_{\mathrm{sa}}$ that $T_{\mathbb R}(B) \to \mathbb R : \tau_B \mapsto \gamma(\tau_B)(a)$ is weak$^\ast$-continuous. But this is clear since $\gamma$ is weak$^\ast$-weak$^\ast$-continuous.  
    
    Clearly $\gamma^\ast$ is linear. To see that it is continuous (with respect to the compact-open topology on both $T_{\mathbb R}(A)^\ast$ and $T_{\mathbb R}(B)^\ast$), let $(a_i)_i$ be a net in $\Ped(A)_{\mathrm{sa}}$ such that $(\hat a_i)_i$ converges uniformly on compact sets to $\hat a$. Let $S\subseteq T_{\mathbb R}(B)$ be compact. Then $\gamma(S)$ is compact by continuity of $\gamma$ and thus 
    \begin{equation}
        \sup_{\tau \in S} | \gamma^\ast(\hat a_i - \hat a)(\tau_B)| = \sup_{\tau_A \in \gamma(S)} | (\hat a_i - \hat a)(\tau_A)| \to 0,
    \end{equation}
    so $\gamma^\ast$ is continuous. Finally, if $\gamma$ is positive and if $f\in T_{\mathbb R}(A)^\ast_+$ and $\tau_B\in T_+(B)$ then $\gamma^\ast(f)(\tau_B) = f(\gamma(\tau_B)) \geq 0$, so $\gamma^\ast$ is positive.

    Now, suppose $\eta \colon T_{\mathbb R}(A)^\ast \to T_{\mathbb R}(B)^\ast$ is a continuous linear (and positive) map. We define $\eta^\ast \colon T_{\mathbb R}(B) \to T_{\mathbb R}(A)$ by $\eta^\ast(\tau_B)(a)= \eta(\hat a) (\tau_B)$ (this is the dual map when applying Proposition \ref{p:dualiso}). We again check that this is well-defined, continuous (and positive). 

    To see that it is well-defined, we clearly have that $a \mapsto \eta(\hat a)(\tau_B)$ is a trace on $\Ped(A)_{\mathrm{sa}}$ so it extends canonically to a self-adjoint trace on $\Ped(A)$. To see that this trace is continuous, let $(a_i)_i$ be a net in $\Ped(A)_{\mathrm{sa}}$ converging to $a\in \Ped(A)_{\mathrm{sa}}$. By Theorem \ref{t:PedTcont}, $\hat a_i \to \hat a$ in $T_{\mathbb R}(A)^\ast$ and thus $\eta(\hat a_i) \to \eta(\hat a)$. It follows that 
    \begin{equation}
        \eta(\hat a_i) (\tau_B) \to \eta(\hat a) (\tau_B)
    \end{equation} 
    for all $\tau_B$ and thus $\eta^\ast(\tau_B) \in T_{\mathbb R}(A)$ for all $\tau_B \in T_{\mathbb R}(B)$. 

    For continuity of $\eta^\ast$ let $\tau_i \to \tau$ in $T_{\mathbb R}(B)$. For $a\in \Ped(A)_{\mathrm{sa}}$ we pick $b\in \Ped(B)_{\mathrm{sa}}$ such that $\hat b = \eta(\hat a)$. Then
    \begin{equation}
        \eta^{\ast}(\tau_i) (a) = \tau_i(b) \to \tau(b) = \eta^\ast(\tau) (a) 
    \end{equation}
    so $\eta^\ast$ is continuous. 

    If $\eta$ is positive, and if $\tau_B \in T_+(B)$ and $a\in A_+$, then $\eta(\hat a) \geq 0$ and thus
    \begin{equation}
        \eta^\ast(\tau_B)(a) = \eta(\hat a) (\tau_B) \geq 0,
    \end{equation}
    so $\eta^\ast$ is positive.

    It is elementary to check that $\gamma^{\ast \ast} = \gamma$ for every continuous linear map $\gamma \colon T_{\mathbb R}(B) \to T_{\mathbb R}(A)$ and that $\eta^{\ast \ast} = \eta$ for every continuous linear map $\eta \colon T_{\mathbb R}(A)^\ast \to T_{\mathbb R}(B)^\ast$ which finishes the proof.
\end{proof}

It is easy to see that any $\ast$-homomorphism $\phi \colon A \to B$ induces a continuous linear positive map $T_{\mathbb R}(B) \to T_{\mathbb R}(A)$, and that $T_{\mathbb R}$ is a contravariant functor. By duality $T_{\mathbb R}(\cdot)^\ast$ is a covariant functor, and any $\ast$-homomorphism induces a continuous linear positive map.

\begin{remark}\label{r:aue}
    Let $\phi, \psi \colon A \to B$ be $\ast$-homomorphisms which are approximately Murray--von Neumann equivalent in the sense of \cite{Gabe-O2class}, i.e.~there is a net $(v_i)_i$ of contractive multipliers of $B$ such that $\lim_i v_i \phi(a) v_i^\ast = \psi(a)$ and $\lim_i v_i^\ast \psi(a) v_i = \phi(a)$ for all $a\in A$ (this is implied by approximate unitary equivalence). Then $T_{\mathbb R}(\phi) = T_{\mathbb R}(\psi)$, and so also $T_{\mathbb R}(\phi)^\ast = T_{\mathbb R}(\psi)^\ast$ by duality.  
    
    In fact, as any $\tau\in T_+(B)$ is norm lower semicontinuous by \cite[Proposition 5.6.7]{Pedersen-book-automorphism}, we have for any positive $a\in \Ped(A)$ that
    \[
    \tau(\psi(a)) \leq \liminf_i \tau(v_i \phi(a) v_i^\ast) = \liminf_i \tau(\phi(a)^{1/2} v_i^\ast v_i \phi(a)^{1/2}) \leq \tau(\phi(a))
    \]
    and similarly $\tau(\phi(a)) \leq \tau(\psi(a))$. As $T_{\mathbb R}(B)$ and $\Ped(A)$ are spanned by $T_+(B)$ and $\Ped(A)_+$ respectively, the equality $T_{\mathbb R}(\phi) = T_{\mathbb R}(\psi)$ follows.
\end{remark}

\section{Extending traces from subalgebras}

In this section we obtain sufficient conditions for when $T_{\mathbb R}(A)$ is completely captured by a $\ast$-subalgebra $D \subseteq \Ped(A)$. 

We will use the following lemma, which is a slight strengthening of the technique used in \cite[Proposition 5.6.2]{Pedersen-book-automorphism} to show that $\Ped(A)$ contains the hereditary $\cs$-subalgebra generated by any finite set in $\Ped(A)$.

\begin{lemma}\label{l:trick}
Let $A$ be a $\cs$-algebra. Given $a_1, \dots, a_m\in A$ we set $a := \left(\sum_{j=1}^m a_j^\ast a_j\right)^{1/2}$. For each $j=1,\dots,m$ there is a unique contractive right $A$-linear map $\Phi_j \colon \overline{aA} \to \overline{a_j A}$ such that $\Phi_j(ab) = a_j b$ for all $b\in A$, and these satisfy $x^\ast y = \sum_{j=1}^m \Phi_j(x)^\ast \Phi_j(y)$ for all $x,y\in \overline{aA}$. 
\end{lemma}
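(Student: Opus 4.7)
My plan is to exploit the fundamental inequality $a_j^\ast a_j \leq a^2$ coming from the definition $a^2 = \sum_{k=1}^m a_k^\ast a_k$. Multiplying on the left by $b^\ast$ and on the right by $b$ preserves this inequality in the $\cs$-algebra, so the $\cs$-identity yields
\[
\| a_j b\|^2 = \| b^\ast a_j^\ast a_j b\| \leq \| b^\ast a^2 b\| = \| ab\|^2
\]
for every $b\in A$ and every $j$. In particular, if $ab = ab'$ then $a(b-b') = 0$ forces $a_j(b-b') = 0$, so the assignment $ab \mapsto a_j b$ is well-defined on the subspace $aA$. This gives a contractive linear map $aA \to a_j A$ which is right $A$-linear on the nose, since $\Phi_j((ab)c) = a_j(bc) = (a_j b)c$.

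Since $aA$ is dense in $\overline{aA}$ and $a_j A \subseteq \overline{a_j A}$, the contractive map extends uniquely by continuity to a contractive right $A$-linear map $\Phi_j \colon \overline{aA} \to \overline{a_j A}$, with $\Phi_j(ab) = a_j b$ for all $b\in A$. Uniqueness of $\Phi_j$ is then immediate, as any two candidates agree on the dense subset $aA$.

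For the final identity, I would first verify it on elements of the dense subspace $aA$. For $b,c \in A$ and $x = ab$, $y = ac$, a direct computation gives
\[
x^\ast y = b^\ast a^2 c = b^\ast \Bigl(\sum_{j=1}^m a_j^\ast a_j\Bigr) c = \sum_{j=1}^m (a_j b)^\ast (a_j c) = \sum_{j=1}^m \Phi_j(x)^\ast \Phi_j(y).
\]
Both sides are separately continuous and conjugate-bilinear in $(x,y) \in \overline{aA} \times \overline{aA}$ (using that each $\Phi_j$ is continuous), so the identity extends from the dense subspace $aA \times aA$ to all of $\overline{aA} \times \overline{aA}$. There is no real obstacle here; the only thing to keep track of is that the norm inequality $\|a_j b\| \leq \|ab\|$ is precisely what allows $\Phi_j$ to be defined on $aA$ in the first place, and it follows cleanly from the $\cs$-identity applied to the operator inequality $a_j^\ast a_j \leq a^2$.
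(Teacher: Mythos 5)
Your proof is correct and follows essentially the same route as the paper's: the key inequality $\|a_jb\|\leq\|ab\|$ from $a_j^\ast a_j\leq a^2$, extension by continuity from $aA$ to $\overline{aA}$, and verification of the summation identity on the dense subspace followed by a limit argument. No gaps.
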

\begin{proof}
Since $a_j^\ast a_j \leq a^2$ we have for $b\in A$ that
\[
\| a_j b \| = \| b^\ast a_j^\ast a_j b\|^{1/2} \leq \| b^\ast a^2 b\|^{1/2} \leq \| a b\|.
\] 
Hence the right $A$-linear map $aA \to \overline{a_j A}$ given by $ab \mapsto a_j b$ for $b\in A$ extends uniquely to a contractive right $A$-linear map $\Phi_j \colon \overline{aA} \to \overline{a_jA}$. 

If $x,y\in \overline{aA}$ let $x_n,y_n \in A$ such that $ax_n \to x$ and $ay_n \to y$. Then $x_n^\ast a^2 y_n = \sum_{j=1}^m x_n^\ast a_j^\ast a_j y_n = \sum_j \Phi_j(ax_n)^\ast \Phi_j(ay_n)$ and thus
\[
x^\ast y = \lim_{n\to \infty} x_n^\ast a^2 y_n = \lim_{n\to \infty} \sum_{j=1}^m \Phi_j(ax_n)^\ast \Phi_j(ay_n) = \sum_{j=1}^m \Phi_j(x)^\ast \Phi_j(y). \qedhere
\]
\end{proof}

We record the following well-known lemma for later use. As we cannot find a reference for either part, we provide proofs.

\begin{lemma}\label{l:Xfull}
    Let $A$ be a $\cs$-algebra and $X \subseteq A$ be a subset that generates $A$ as a two-sided closed ideal. 
    \begin{itemize}
        \item [(a)] For every $a\in A_+$ and $\epsilon >0$ there exist $m\in \mathbb N$, $x_1,\dots, x_m \in X$ and $c_1,\dots, c_m \in A$ such that $\| a - \sum_{j=1}^m c_j^\ast x_j^\ast x_j c_j \| <\epsilon$.
        \item[(b)] For every $a\in \Ped(A)_+$ there exist $m\in \mathbb N$, $x_1,\dots, x_m \in X$ and $c_1,\dots, c_m \in A$ such that $a = \sum_{j=1}^m c_j^\ast x_j^\ast x_j c_j$.
    \end{itemize}
\end{lemma}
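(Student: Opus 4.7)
The plan is to prove (a) by a cone argument exploiting the ideal-generation hypothesis, and to derive (b) from (a) via Douglas factorization together with structural properties of the Pedersen ideal.

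For (a), I would first verify that $\{x^\ast x : x \in X\}$ also generates $A$ as a closed two-sided ideal: for $x \in X$, choosing $f_n \in C_0((0,\infty))$ with $|f_n|\leq 1$ and $f_n(t) \to 1$ pointwise on $(0,\infty)$, one has $xf_n(x^\ast x) \to x$ in norm, placing $x$ in the closed two-sided ideal generated by $x^\ast x$. Let
\[
L = \overline{\left\{\sum_{j=1}^n c_j^\ast x_j^\ast x_j c_j : n \in \mathbb{N},\ c_j \in A,\ x_j \in X\right\}} \subseteq A_+.
\]
The cone $L$ is stable under addition, positive scaling, and the compression $a \mapsto b^\ast a b$ for $b \in A$. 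A polarization identity then shows that $(L - L) + i(L - L)$ is a closed two-sided $\ast$-ideal of $A$ containing every $x^\ast x$, hence equal to $A$. This forces $L = A_+$, which yields (a).

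For (b), the strategy is: given $a \in \Ped(A)_+$, find an exact element $E \in L$ of the form $\sum_k d_k^\ast x_k^\ast x_k d_k$ with a Loewner bound $a \leq \mu E$; Douglas factorization then furnishes $u \in A$ with $a^{1/2} = E^{1/2} u$, so
\[
a = u^\ast E u = \sum_k (d_k u)^\ast x_k^\ast x_k (d_k u),
\]
giving the required exact decomposition. To begin, I would show that every $a \in \Ped(A)_+$ admits $e \in A_+$ with $\|e\| \leq 1$ and $ea = a$, i.e., $\Ped(A)_+ = \Ped(A)_0$. Given $a \leq b := \sum_j a_j$ with $a_j \in \Ped(A)_0$ and $e_j a_j = a_j$, $\|e_j\| \leq 1$, the operator-monotone function $t \mapsto \min(t,1)$ applied to $\sum_j e_j$ produces $e \in A_+$ with $\|e\| \leq 1$ and $e \geq e_j$ for each $j$. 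Then $a_j^{1/2}(1-e)a_j^{1/2} \leq a_j^{1/2}(1-e_j)a_j^{1/2} = 0$, and squaring forces $(1-e)a_j = 0$, so $eb = b$. Repeating the argument with $a \leq b$ gives $(1-e)a(1-e) \leq (1-e)b(1-e) = 0$, hence $(1-e)a^{1/2} = 0$ and $ea = a$. Consequently $a \leq \|a\|\,e^2 \leq \|a\|\,e$, reducing (b) to exhibiting a Loewner-dominating element of $L$.

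The principal obstacle is converting the norm approximation of $e$ from (a) into the Loewner bound $e \leq \mu E$: norm proximity $\|e - E\| < \epsilon$ does not imply operator domination, since $E$ may be spectrally "narrower" than $e$. My proposed workaround is spectral truncation — replace $a$ by $(a - \delta)_+ \in \cs(a) \subseteq \Ped(A)$ for generic small $\delta > 0$ so that $0$ becomes isolated in the spectrum of $(a-\delta)_+$ and a spectral projection $q \in \cs(a)$ with $q(a-\delta)_+ = (a-\delta)_+$ is available. The Loewner bound can then be obtained on the "compact" portion of the spectrum, and the residual $a - (a-\delta)_+$, of norm at most $\delta$, is absorbed via a telescoping argument with successively smaller truncation parameters, ultimately yielding an exact finite-sum decomposition of $a$.
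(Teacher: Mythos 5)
There are genuine gaps in both parts. For part (a), the cone framework is sensible, but the two load-bearing steps are asserted rather than proved. First, $(L-L)+i(L-L)$ need not be closed just because the cone $L$ is closed, so at best you get that the \emph{closed} span of $L$ is all of $A$. Second, and more seriously, the deduction ``this forces $L=A_+$'' is precisely the content of statement (a) and does not follow formally from the listed stability properties: you must still show that a positive element whose square root is approximated by elements of the dense ideal spanned by $AXA$ actually lies in $L$. The missing argument is the one the paper gives: approximate $a^{1/2}$ by $\sum_j b_j x_j d_j$, square, dominate the result by $a_1 = \sum_j 2^j\|b_j\|^2 d_j^\ast x_j^\ast x_j d_j$ using $(\sum_j u_j)^\ast(\sum_j u_j)\leq \sum_j 2^j u_j^\ast u_j$, and then convert the Loewner bound $a_0\leq a_1$ into approximate membership in $L$ via Pedersen's factorisation $a_0^{1/2}=a_1^{1/4}u$ and the approximants $(a_1+1/n)^{-1/2}$. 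Interestingly, this is exactly the ``norm proximity does not imply operator domination'' obstacle you flag later for part (b); it already sits inside part (a) and is not addressed there.

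Part (b) rests on the claim that $\Ped(A)_+=\Ped(A)_0$, which is false. For $A=\{f\in C([0,1],M_2): f(0)\in\mathbb{C}e_{11}\}$, the elements $d(t)=w(t)w(t)^\ast$ with $w(t)=(1,t^{1/4})^{\mathrm T}$ and the constant $e_{11}$ both lie in $\Ped(A)_0$, but their sum has full rank for $t>0$ and rank one at $t=0$, so any $e$ with $e(d+e_{11})=d+e_{11}$ would have to be $1_{M_2}$ for $t>0$, contradicting $e(0)\in\mathbb{C}e_{11}$; thus $\Ped(A)_0$ is not even closed under addition. Your proof of the claim also uses that $t\mapsto\min(t,1)$ is operator monotone, which it is not (it is not analytic at $t=1$; a $2\times 2$ example with a projection and a rotated projection violates monotonicity), so $e=\min(\sum_j e_j,1)$ need not dominate the $e_j$. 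Finally, even where a Loewner bound $a\leq\mu E$ is available, Douglas/Pedersen factorisation only yields $a^{1/2}=uE^{\alpha}$ for $\alpha<1/2$ with $u\in A$; the case $\alpha=1/2$ needed for $a=u^\ast Eu$ is exactly the one that fails, and the proposed repair via spectral truncation does not work as stated ($0$ need not become isolated in the spectrum of $(a-\delta)_+$ for any small $\delta$ when $\mathrm{spec}(a)=[0,\|a\|]$, spectral projections need not lie in $\cs(a)$, and an infinite telescoping sum cannot produce the required \emph{finite} decomposition). The paper's route avoids all of this: it writes $a=\sum_j v_j^\ast v_j$ with $v_jv_j^\ast\leq a_j\in\Ped(A)_0$ using Pedersen's Riesz-type decomposition, applies part (a) only to the local units $e_j$ of the $a_j$ (which exist by definition of $\Ped(A)_0$), upgrades the norm approximation to an exact identity for $(e_j-\tfrac12)_+$ via R{\o}rdam's lemma $\|e-b\|<\epsilon\Rightarrow(e-\epsilon)_+=d^\ast bd$, and concludes using $(e_j-\tfrac12)_+v_j=\tfrac12 v_j$. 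Some substitute for these two ingredients (the decomposition $a=\sum_j v_j^\ast v_j$ and the exact-equality lemma) is needed for any correct proof of (b).
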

\begin{proof}
    (a): Since 
    \begin{equation}
    (u+v)^\ast (u+v) \leq (u+v)^\ast (u+v) + (u-v)^\ast (u-v) = 2u^\ast u + 2 v^\ast v,
    \end{equation}
    we have $(\sum_{j=1}^m u_j)^\ast ( \sum_{j=1}^m u_j) \leq \sum_j 2^j u_j^\ast u_j$ by induction. Now, as $AXA$ spans a dense ideal in $A$,  $a^{1/2}$ can be approximated by $\sum_{j=1}^m b_j x_j d_j$ where $x_j\in X$ and $b_j,d_j\in A$. Hence $a$ is approximately 
    \begin{equation}
        a_0 := (\sum_j b_j x_j d_j)^\ast (\sum_j b_j x_j d_j) \leq a_1 := \sum_j 2^j \| b_j\|^2 d_j^\ast x_j^\ast x_j d_j.
    \end{equation} Write $a_0^{1/2} = a_1^{1/4} u$ for some $u\in A$ by \cite[Proposition 1.4.5]{Pedersen-book-automorphism}. For large $n$, $a_0$ is approximately 
\begin{equation}
u^\ast a_1^{1/4} (a_1 + 1/n)^{-1/2} a_1(a_1+1/n)^{-1/2} a_1^{1/4} u = \sum_j c_j^\ast x_j^\ast x_j c_j
\end{equation}
where $c_j = 2^{j/2} \| b_j\| d_j (a_1+ 1/n)^{-1/2} a_1^{1/4} u$.

(b): Let $a_1,\dots,a_n \in \Ped(A)_0$ such that $a \leq \sum_j a_j$. By \cite[Proposition 1.4.10]{Pedersen-book-automorphism} there are $v_1,\dots, v_n \in A$ such that $v_jv_j^\ast \leq a_j$ (and in particular, $v_jv_j^\ast \in \Ped(A)_0$), and $a = \sum_{j=1}^n v_j^\ast v_j$. Let $e_j \in \Ped(A)_+$ so that $e_j v_j = v_j$. By part (a) we may find $x_{j,k} \in X$ (finitely many) and $z_{j,k} \in A$ such that $\| e_j - \sum_k z_{j,k}^\ast x_{j,k}^\ast x_{j,k} z_{j,k} \| < 1/2$. By \cite[Lemma 2.2]{KirchbergRordam-absorbingOinfty} find $d_{j} \in A$ so that $\sum_k d_{j}^\ast z_{j,k}^\ast x_{j,k}^\ast x_{j,k} z_{j,k} d_{j} = e_j' := (e_j - \tfrac{1}{2})_+$. Then $e_j' v_j = \tfrac{1}{2}v_j$.\footnote{In fact, if $f\in C_0((0, \|e_j\|\,])$ then $f(e_j) v_j = f(1) v_j$, since this can be verified when $f$ is a polynomial with no constant term.} Letting $c_{j,k} := \sqrt{2} z_{j,k} d_{j} v_j$ we get 
\[
\sum_{j,k} c_{j,k}^\ast x_{j,k}^\ast x_{j,k} c_{j,k} = 2\sum_j v_j^\ast e_j' v_j = a. \qedhere
\]
\end{proof}

The following proposition gives a sufficient condition under which traces on $A$ can be studied through a $\ast$-subalgebra $A_0 \subseteq \Ped(A)$. We have stated the result somewhat generally (and technically) so that it can hopefully be useful in multiple contexts, such as when $A_0$ is dense or a full hereditary $\cs$-subalgebra. 

In the following, we consider a $\ast$-subalgebra $A_0 \subseteq \Ped(A)$ with a subset $X\subseteq A_0$ such that $XA_0 = A_0$, $X$ generates $A$ as a two-sided closed ideal, and $xA_0y^\ast$ is dense in $\overline{xAy^\ast}$ for all $x,y\in X$. Under these assumptions, we define $T_{\mathbb R}(A_0,X)$ as the set
\[
\{ \tau_0 \colon A_0 \to \mathbb C : \tau_0 \textrm{ self-adjoint, tracial with $\tau_0|_{xA_0x^\ast}$ bounded for all $x\in X$}\}. 
\]
Then $T_{\mathbb R}(A_0,X)$ is an ordered (real) vector space with cone of traces $\tau_0$ that are positive in the sense that $\tau_0(a^\ast a) \geq 0$ for all $a\in A_0$. 

We equip $T_{\mathbb R}(A_0,X)$ with the weak topology coming from evaluation at elements in $\overline{xAx^\ast}$ for $x\in X$ (which makes sense since each $\tau_0$ extends uniquely to a bounded trace on $\overline{xA_0 x^\ast} = \overline{xAx^\ast}$). Alternatively, a net $(\tau_i)_i$ in $T_{\mathbb R}(A_0,X)$ converges to $\tau_0\in T_{\mathbb R}(A_0,X)$ if for every $x\in X$ and sequence $(a_n)_n$ in $A_0$ such that $(xa_n x^\ast)_n$ is a $\|\cdot\|$-Cauchy sequence, we have
\[
\lim_i \lim_{n\to \infty} \tau_i(xa_nx^\ast) = \lim_{n\to \infty} \tau_0(xa_n x^\ast).
\]

\begin{proposition}\label{p:extendtraces}
Let $A$ be a $\cs$-algebra, let $A_0 \subseteq \Ped(A)$ be a $\ast$-sub\-algebra with a subset $X\subseteq A_0$ such that $X A_0 = A_0$, $X$ generates $A$ as a two-sided closed ideal, and $xA_0y^\ast$ is dense in $\overline{xAy^\ast}$ for all $x,y\in X$. Then the restriction map
\[
T_{\mathbb R}(A) \to T_{\mathbb R}(A_0,X) , \quad \tau \mapsto \tau|_{A_0}
\]
is an isomorphism of ordered topological vector spaces. 
\end{proposition}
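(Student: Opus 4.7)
The plan is to verify that the restriction map $\tau \mapsto \tau|_{A_0}$ is a well-defined, positive, bicontinuous bijection onto $T_{\mathbb R}(A_0, X)$, with the construction of its inverse being the main technical step.

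For well-definedness of the restriction map, I will note that for $\tau \in T_{\mathbb R}(A)$ and $x \in X$, the set $xA_0x^*$ sits inside $\overline{xAx^*}$, which is a $\cs$-subalgebra of $\Ped(A)$ by Lemma \ref{l:Pedher} (since $x \in \Ped(A)$); hence $\tau$ is bounded there by continuity in the inductive limit topology. Self-adjointness, traciality, and positivity are inherited. Injectivity will follow from Lemma \ref{l:Xfull}(b): if $\tau|_{A_0} = 0$ and $a \in \Ped(A)_+$ has decomposition $a = \sum_j c_j^* x_j^* x_j c_j$, then the trace property gives $\tau(a) = \sum_j \tau(x_j c_j c_j^* x_j^*)$, each summand lies in $\overline{x_j A x_j^*}$ on which $\tau$ is bounded, and $\tau$ vanishes on the dense $\ast$-subalgebra $x_j A_0 x_j^*$ by hypothesis, so by continuity each term vanishes.

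For the inverse, I plan to extend $\tau_0 \in T_{\mathbb R}(A_0,X)$ in stages. First, for each $x \in X$, the restriction $\tau_0|_{xA_0x^*}$ extends uniquely by continuity to a bounded self-adjoint tracial functional $\bar\tau_x$ on $\overline{xAx^*}$. I will then establish a key compatibility: for $x, y \in X$ and $u \in \overline{xAy^*}$, $\bar\tau_x(uu^*) = \bar\tau_y(u^*u)$. By density of $xA_0y^* \subseteq \overline{xAy^*}$ and continuity, this reduces to $u = xcy^*$ with $c \in A_0$; here $uu^* = x(cy^*yc^*)x^* \in xA_0x^*$ and $u^*u = y(c^*x^*xc)y^* \in yA_0y^*$, and the identity follows from repeatedly applying the trace property of $\tau_0$ on $A_0$, as both sides equal $\tau_0(xcy^*yc^*x^*) = \tau_0(yc^*x^*xcy^*)$.

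Finally I will define the extension by $\tau(a) := \sum_j \bar\tau_{x_j}(x_j c_j c_j^* x_j^*)$ for $a \in \Ped(A)_+$ decomposed as $\sum_j c_j^* x_j^* x_j c_j$ via Lemma \ref{l:Xfull}(b), extended by linearity. The main obstacle will be showing this is independent of the decomposition. My strategy is to invoke Lemma \ref{l:trick}: setting $e_j := x_j c_j \in \overline{x_j A}$, it furnishes right-$A$-linear contractions $\Phi_j \colon \overline{a^{1/2}A} \to \overline{e_j A}$ with $\Phi_j(a^{1/2}) = e_j$ and $\sum_j \Phi_j(\xi)^* \Phi_j(\eta) = \xi^*\eta$, so two decompositions of $a$ give two isometric embeddings of $\overline{a^{1/2}A}$ into Hilbert-$A$-module direct sums $\bigoplus_j \overline{e_jA}$ and $\bigoplus_k \overline{f_kA}$ that realise the same square root $a^{1/2}$; the compatibility of $\{\bar\tau_x\}$ established above is then precisely what is needed to identify $\sum_j \bar\tau_{x_j}(e_je_j^*)$ with $\sum_k \bar\tau_{y_k}(f_kf_k^*)$. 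Once well-definedness is in hand, positivity of $\tau$ when $\tau_0$ is positive follows term-by-term, and for general self-adjoint $\tau_0$ the required continuity on $\cs$-subalgebras of $\Ped(A)$ follows by reducing (via Lemma \ref{l:Xfull}(b)) to hereditary $\cs$-subalgebras generated by finitely many $x_j \in X$, which sit inside $\Ped(A)$ by Proposition \ref{p:upwards}, and invoking boundedness of each $\bar\tau_{x_j}$. Linearity, traciality, and the identity $\tau|_{A_0} = \tau_0$ are routine. Bicontinuity of the restriction map is then a consequence of Lemma \ref{l:compact}, since weak$^\ast$-convergent nets in $T_{\mathbb R}(A)$ are uniformly bounded on $\cs$-subalgebras of $\Ped(A)$, letting pointwise convergence on $xA_0x^*$ propagate to $\overline{xAx^*}$ and thence, through the decomposition formula, to all of $\Ped(A)$.
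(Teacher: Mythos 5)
Your overall architecture matches the paper's: restrict, prove injectivity by decomposing $a\in \Ped(A)_+$ via Lemma \ref{l:Xfull}(b) and using boundedness on $\overline{x_jAx_j^\ast}$, extend $\tau_0$ to each $\overline{xAx^\ast}$, establish the compatibility $\bar\tau_x(uu^\ast)=\bar\tau_y(u^\ast u)$ for $u\in\overline{xAy^\ast}$ by density of $xA_0y^\ast$, and define $\tau$ on $\Ped(A)_+$ through a decomposition. Your route to independence of the decomposition is the one place you genuinely diverge: the paper simply cites \cite[Lemma 5.2.5]{Pedersen-book-automorphism} to produce cross-terms $r_{j,k}\in\overline{x_jAy_k^\ast}$ with $\sum_k r_{j,k}r_{j,k}^\ast = e_je_j^\ast$ and $\sum_j r_{j,k}^\ast r_{j,k}=f_kf_k^\ast$, whereas you propose to manufacture them from Lemma \ref{l:trick}. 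This can be made to work (take $r_{j,k}=\Phi_j(f_k^\ast)$, noting $f_k^\ast f_k\leq a$ forces $f_k^\ast\in\overline{a^{1/2}A}$ and $r_{j,k}\in\overline{x_jAy_k^\ast}$; one of the two displayed identities is immediate from the lemma, the other needs a short limit argument with $(a+1/n)^{-1/2}$), but as written it is an assertion rather than a proof, and it essentially reproves the cited lemma.

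The genuine gap is the continuity of the extension $\tau$ for a general self-adjoint $\tau_0$. Applying Lemma \ref{l:Xfull}(b) to each contractive $a$ in a fixed $\cs$-subalgebra $D\subseteq\Ped(A)$ yields a decomposition $a=\sum_{j=1}^{m}c_j^\ast x_j^\ast x_jc_j$ in which both $m$ and the elements $x_j\in X$ depend on $a$; the resulting estimate $|\tau(a)|\leq\sum_{j=1}^{m}\|\bar\tau_{x_j}\|$ is therefore not uniform over the unit ball of $D$, and $D$ need not be contained in the hereditary $\cs$-subalgebra generated by any finite subset of $X$, so the ``reduction'' you describe does not go through. (When $\tau_0$ is positive this is harmless, since positive linear functionals on $\Ped(A)$ are automatically continuous; the problem is precisely the signed case.) The paper's fix is to decompose a single strictly positive element $h$ of a $\sigma$-unital hereditary $\cs$-subalgebra $B\subseteq\Ped(A)$ once, as $h=\sum_jc_j^\ast c_j$ with $c_j\in\overline{x_jA}$, and then use the contractions $\Phi_j$ of Lemma \ref{l:trick} to transport this one decomposition to every $a\in B_+$ via $a=\sum_j\Phi_j(a^{1/2})^\ast\Phi_j(a^{1/2})$, giving the uniform bound $|\tau(a)|\leq\sum_j\|\tau_0^{x_j}\|$. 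In short, Lemma \ref{l:trick} is indispensable at the continuity step, not only (or even primarily) at the well-definedness step where you deploy it.
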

\begin{proof}
The map is clearly well-defined since any $\tau\in T_{\mathbb R}(A)$ is bounded on $\overline{xAx^\ast} \subseteq \Ped(A)$ (Lemma \ref{l:Pedher}) for all $x\in X$ by the definition of the topology on $\Ped(A)$. It is also clearly continuous. We show that the map is bijective.

For injectivity, suppose $\tau_1, \tau_2 \in T_{\mathbb R}(A)$ are such that $\tau_1|_{A_0} = \tau_2|_{A_0}$. Let $a\in \Ped(A)_+$. It suffices to show that $\tau_1(a) = \tau_2(a)$. 

By Lemma \ref{l:Xfull} we pick $x_1,\dots,x_m \in X$ and $c_j \in \overline{x_j A}$ such that $a = \sum_j c_j^\ast c_j$. As $c_j c_j^\ast \in \overline{x_j A x_j^\ast} = \overline{x_j A_0 x_j^\ast}$, we pick $y_{j,n}$ in $A_0$ such that $x_j y_{j,n} x_j^\ast \to c_j c_j^\ast$ as $n\to \infty$. As $\tau_1$ and $\tau_2$ are bounded on $\overline{x_j A x_j^\ast}$ we get
\[
\tau_1(a) = \sum_j \tau_1(c_j c_j^\ast) = \lim_{n\to \infty} \tau_1(x_j y_{j,n} x_j^\ast) = \lim_{n\to \infty} \tau_2(x_j y_{j,n} x_j^\ast) =  \tau_2(a).
\]

For surjectivity of the map, suppose $\tau_0 \colon A_0 \to \mathbb C$ is a self-adjoint trace such that $\tau_0|_{xA_0x^\ast}$ is bounded for all $x\in X$. Hence $\tau_0$ extends canonically to $\overline{xAx^\ast}$ for each $x\in X$ and we denote this extension by $\tau_0^x$. Note that if $x,y\in X$ and $r \in \overline{xAy^\ast}$ we have $\tau_0^x(rr^\ast) = \tau_0^y(r^\ast r)$. In fact, since $xA_0 y^\ast$ is dense in $\overline{xAy^\ast}$, we may find $z_n \in A_0$ such that $xz_ny^\ast \to r$, and thus
\begin{equation}\label{eq:tau0xrr*}
\tau_0^x(rr^\ast) = \lim_{n\to \infty} \tau_0(xz_ny^\ast y z_n^\ast x^\ast) = \lim_{n\to \infty} \tau_0(yz_n^\ast x^\ast x z_n y^\ast) = \tau_0^y(r^\ast r).
\end{equation}
We now construct an extension $\tau$ of $\tau_0$ to $\Ped(A)$. Let $a\in \Ped(A)_+$ and find $x_1,\dots,x_m \in X$ and $c_j\in \overline{x_j A}$ such that $a = \sum_j c_j^\ast c_j$ by Lemma \ref{l:Xfull}. We define $\tau(a) = \sum_j \tau_0^{x_j}(c_j c_j^\ast)$. 

To see that this is well-defined, let $y_1,\dots,y_n \in X$ and $d_j\in \overline{y_j A}$ such that $a= \sum_{k=1}^n d_k^\ast d_k$. Since $a = \sum_j c_j^\ast  c_j = \sum_k d_k^\ast d_k$ there are by \cite[Lemma 5.2.5]{Pedersen-book-automorphism} $r_{j,k} \in A$ such that
\[
c_j c_j^\ast  = \sum_k r_{j,k} r_{j,k}^\ast, \qquad d_k d_k^\ast   = \sum_j r_{j,k}^\ast r_{j,k}.
\]
Note that $r_{j,k} \in \overline{x_j A y_k^\ast}$. By \eqref{eq:tau0xrr*} we have
\[
\sum_{j=1}^m \tau_0^{x_j}(c_jc_j^\ast ) = \sum_{j,k} \tau_0^{x_j}(r_{j,k} r_{j,k}^\ast) = \sum_{j,k} \tau_0^{y_k}(r_{j,k}^\ast r_{j,k}) = \sum_{k=1}^n \tau_0^{y_k}(d_k d_k^\ast).
\]
Hence $\tau\colon \Ped(A)_+\to \mathbb R$ is well-defined, and extends to a (unique) self-adjoint linear functional $\tau \colon \Ped(A) \to \mathbb C$. 

From the definition of $\tau$ it is immediate that if $a\in \Ped(A)_+$ and $u\in \widetilde A$ is a unitary, then $\tau (u^\ast a u) = \tau(a)$ (by considering the definition with $c_j u$ instead of $c_j$). Hence it follows that $\tau$ is tracial. 

To see that $\tau|_{A_0} = \tau_0$ let $a\in A_0$. As $X\cdot A_0 = A_0$ we write $a= b^\ast c$ for $b,c\in A_0$. Then $a = \tfrac{1}{4}\sum_{j=0}^3 i^j (b+i^j c)^\ast (b+i^j c)$ (the polarisation identity). By considering each term separately, we may assume that $a= b^\ast b$ with $b\in A_0$. As $X \cdot A_0 = A_0$ we find $x\in X$ and $b_0 \in A_0$ such that $b = xb_0$. Then $a= b_0^\ast x^\ast x b_0$ and thus $\tau(a) = \tau_0(xb_0b_0^\ast x^\ast) = \tau_0(a)$.

Now, to see that $\tau$ is continuous let $B\subseteq \Ped(A)$ be a $\sigma$-unital hereditary $\cs$-subalgebra. Let $h\in B$ be strictly positive and find $x_1,\dots,x_m\in X$ and $c_j\in \overline{x_j A}$ such that $h = \sum_{j=1}^m c_j^\ast c_j$. By Lemma \ref{l:trick} we find contractive right $A$-linear maps $\Phi_j \colon \overline{hA} \to \overline{c_j A} \subseteq \overline{x_j A}$ such that $b^\ast c = \sum_{j=1}^m \Phi_j(b)^\ast \Phi(c)$ for all $b,c\in \overline{hA}$. For $a\in B_+$ with $\| a\| \leq 1$ we have $a= \sum_j \Phi_j(a^{1/2})^\ast \Phi_j(a^{1/2})$ with $\Phi_j(a^{1/2}) \in \overline{x_j A}$ and thus $\tau(a) = \sum_j \tau_0^{x_j}(\Phi_j(a^{1/2}) \Phi_j(a^{1/2})^\ast)$. Since $\Phi_j(a^{1/2})\Phi_j(a^{1/2})^\ast \in \overline{x_j A x_j^\ast}$ is contractive for each $j$ and $\tau_0^{x_j}$ is bounded, we get
\[
|\tau(a)| \leq \sum_{j=1}^m |\tau_0^{x_j}(\Phi_j(a^{1/2}) \Phi_j(a^{1/2})^\ast)| \leq \sum_{j=1}^m \| \tau_0^{x_j}\|.
\] 
Hence $\tau|_B$ is bounded, and therefore $\tau$ is continuous. We have therefore shown that $T_{\mathbb R}(A) \to T_{\mathbb R}(A_0,X)$ is a continuous positive bijective linear map. It remains to show that the inverse is continuous and positive.

If $\tau_0\in T_{\mathbb R}(A_0,X)$ is positive, then it is immediate that the extensions $\tau_0^x$ are positive for all $x\in X$, so it is easily seen that the trace $\tau$ constructed in the surjectivity part of the proof is positive.

Finally, suppose $(\tau_i)_i$ is a net in $T_{\mathbb R}(A)$ such that $\tau_i|_{A_0}$ converges to $\tau|_{A_0}$. Let $a\in \Ped(A)_+$ and use Lemma \ref{l:Xfull}(b) to find $x_1,\dots, x_m \in X$ and $c_1,\dots, c_m \in A$ so that $a= \sum_{j=1}^m c_j^\ast x_j^\ast x_j c_j$. Then 
\[
\tau_i(a) = \sum_j \tau_i|_{A_0}^{x_j}(x_j c_j c_j^\ast x_j^\ast) \to \sum_j \tau|_{A_0}^{x_j}(x_j c_j c_j^\ast x_j^\ast) = \tau(a)
\]
so $\tau_i \to \tau$.
\end{proof}

\begin{corollary}\label{c:Tfullher}
Let $A \subseteq B$ be a full hereditary $\cs$-subalgebra. The restriction map $T_{\mathbb R}(B) \to T_{\mathbb R}(A)$ is an isomorphism of ordered topological vector spaces, and so is the induced map $T_{\mathbb R}(A)^\ast \to T_{\mathbb R}(B)^\ast$.
\end{corollary}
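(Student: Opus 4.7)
The approach is to apply Proposition \ref{p:extendtraces} with $A_0 := \Ped(A)$ and $X := \Ped(A)_+$, viewed as sitting inside $\Ped(B)$, and then to identify the resulting space $T_{\mathbb R}(\Ped(A),X)$ with $T_{\mathbb R}(A)$.

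First I would verify that $\Ped(A) \subseteq \Ped(B)$: given $a \in \Ped(A)_0$ with local unit $e \in A_+$ satisfying $ea = a$, the element $e$ also lies in $B_+$ and witnesses $a \in \Ped(B)_0$; this extends to all of $\Ped(A)$ by taking hereditary cones and linear spans. Hence restriction gives a well-defined continuous positive linear map $T_{\mathbb R}(B) \to T_{\mathbb R}(A)$. Next I would verify the three hypotheses of Proposition \ref{p:extendtraces}. (i) Every $a \in \Ped(A)$ admits a local unit in $\Ped(A)_+$ (using Lemma \ref{l:Pedher} together with continuous functional calculus), so $XA_0 = A_0$. (ii) Since $X$ spans the dense subset $\Ped(A) \subseteq A$ and $A$ is full in $B$, $X$ generates $B$ as a closed two-sided ideal. (iii) For $x,y \in X$ and $b \in B$, pick an approximate unit $(f_\mu)$ of $A$; then $f_\mu b f_\mu \in A$ by the hereditary property of $A \subseteq B$, and $xf_\mu b f_\mu y^\ast \to xby^\ast$ shows that $xAy^\ast$ is dense in $\overline{xBy^\ast}$, whence $x\Ped(A)y^\ast$ is dense there as well since $\Ped(A)$ is dense in $A$.

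With the hypotheses in place, Proposition \ref{p:extendtraces} yields an isomorphism of ordered topological vector spaces $T_{\mathbb R}(B) \cong T_{\mathbb R}(\Ped(A),X)$ given by restriction to $\Ped(A)$. I would then identify $T_{\mathbb R}(\Ped(A),X)$ with $T_{\mathbb R}(A)$. For the set equality, $T_{\mathbb R}(A) \subseteq T_{\mathbb R}(\Ped(A),X)$ is immediate since continuity on $\Ped(A)$ gives boundedness on every $\cs$-subalgebra of $\Ped(A)$, in particular on each $x\Ped(A)x^\ast$; conversely, any $\tau_0 \in T_{\mathbb R}(\Ped(A),X)$ extends by Proposition \ref{p:extendtraces} to some $\tilde\tau \in T_{\mathbb R}(B)$ which is continuous on $\Ped(B) \supseteq \Ped(A)$, so $\tau_0 = \tilde\tau|_{\Ped(A)}$ is continuous on $\Ped(A)$. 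The orders agree (positivity on both sides is $\tau(\Ped(A)_+) \subseteq [0,\infty)$), and the topologies agree: in one direction, weak$^\ast$ convergence on $\Ped(A)$ gives pointwise convergence on each $\overline{xAx^\ast} \subseteq \Ped(A)$; in the other direction, Lemma \ref{l:Xfull}(b) lets us write each $a \in \Ped(A)_+$ as $\sum_j c_j^\ast x_j^\ast x_j c_j$ with $x_j \in X$ and $c_j \in A$, so that $\tau_i(a) = \sum_j \tau_i^{x_j}(x_j c_j c_j^\ast x_j^\ast)$ converges by the tracial property and evaluation at $x_j c_j c_j^\ast x_j^\ast \in \overline{x_j A x_j^\ast}$. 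Composing the two identifications gives the isomorphism $T_{\mathbb R}(B) \cong T_{\mathbb R}(A)$.

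The dual statement follows from Theorem \ref{t:duality}: both the restriction $T_{\mathbb R}(B) \to T_{\mathbb R}(A)$ and its inverse dualise to continuous positive linear maps between $T_{\mathbb R}(A)^\ast$ and $T_{\mathbb R}(B)^\ast$, and they remain mutually inverse by the functoriality of the duality. The main obstacle is the density condition $x\Ped(A)y^\ast \subseteq \overline{xBy^\ast}$ densely in hypothesis (iii); the resolution is the approximate-unit sandwich $xf_\mu b f_\mu y^\ast \to xby^\ast$, which works precisely because $f_\mu b f_\mu \in A$ by the hereditary property.
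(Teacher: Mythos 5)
Your proof is correct and follows essentially the same route as the paper, which simply invokes Proposition \ref{p:extendtraces} with $A_0 = \Ped(A)$ and $X = \Ped(A)$ (your choice $X = \Ped(A)_+$ is an immaterial variant) and then appeals to Theorem \ref{t:duality} for the dual statement. The only soft spot is your justification of $XA_0 = A_0$ via exact local units, which general elements of $\Ped(A)$ need not admit; instead, apply Cohen factorisation in the $\sigma$-unital hereditary $\cs$-subalgebra of $\Ped(A)$ generated by $a$ (Lemma \ref{l:Pedher}) to obtain the required factorisation $a = xb$ with $x \in \Ped(A)_+$ and $b \in \Ped(A)$.
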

\begin{proof}
    This follows from Proposition \ref{p:extendtraces} (use $\Ped(A)$ as both the $\ast$-sub\-algebra and $X$).
    The final part of the corollary follows by duality (Theorem \ref{t:duality}).
\end{proof}

In what follows we let $A\otimes B$ denote the spatial (also known as minimal) tensor product of the $\cs$-algebras $A$ and $B$. We let $\Ped(A) \odot \Ped(B)$ denote the algebraic tensor product of the Pedersen ideals which canonically sits inside $\Ped(A \otimes B)$ as a $\ast$-subalgebra.

\begin{corollary}
Let $A$ and $B$ be $\cs$-algebras with traces $\tau_A \in T_{\mathbb R}(A)$ and $\tau_B \in T_{\mathbb R}(B)$. Then there is a unique trace $\tau_A \otimes \tau_B \in T_{\mathbb R}(A \otimes B)$ that satisfies $(\tau_A \otimes \tau_B)(a \otimes b) = \tau_A(a) \tau_B(b)$ for each $a \in \Ped(A)$ and $b \in \Ped(B)$ and is positive if both $\tau_A$ and $\tau_B$ are positive. Moreover, for any $\tau_B \in T_{\mathbb R}(B)$ the assignment $\tau_A \mapsto \tau_A \otimes \tau_B \colon T_{\mathbb R}(A) \to T_{\mathbb R}(A \otimes B)$ is continuous. If $\tau_B$ is the unique non-zero positive trace in $T_{\mathbb R}(B)$ up to scaling then this is an isomorphism of ordered topological vector spaces.
\end{corollary}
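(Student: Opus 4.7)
The plan is to invoke Proposition \ref{p:extendtraces} with the $\ast$-subalgebra $A_0 := \Ped(A) \odot \Ped(B) \subseteq \Ped(A \otimes B)$ (the inclusion uses that for $a \in \Ped(A)_+$ and $b \in \Ped(B)_+$, if $e_A \in \Ped(A)_+$, $e_B \in \Ped(B)_+$ satisfy $e_Aa=a$ and $e_Bb=b$, then $(e_A \otimes e_B)(a \otimes b) = a \otimes b$, so $a \otimes b \in \Ped(A \otimes B)_0$) and the generating set $X := \{h_A \otimes h_B : h_A \in \Ped(A)_+, h_B \in \Ped(B)_+\}$. The three hypotheses of Proposition \ref{p:extendtraces} follow: $XA_0 = A_0$ because for each element of $A_0$ one can use Lemma \ref{l:Pedher} to choose $h_A,h_B$ acting as units on its finitely many Pedersen factors; $X$ generates the closed ideal $A \otimes B$ because its closed linear span contains the dense subspace $A_0$; and for $x = h_A \otimes h_B$, $y = k_A \otimes k_B \in X$, the density of $xA_0y^\ast = (h_A \Ped(A) k_A) \odot (h_B \Ped(B) k_B)$ in $\overline{h_A A k_A} \otimes \overline{h_B B k_B} = \overline{x(A\otimes B)y^\ast}$ follows from density of $\Ped(A), \Ped(B)$ and density of algebraic tensor products in spatial tensor products. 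Define $\tau_0(a \otimes b) := \tau_A(a)\tau_B(b)$ extended linearly; this is self-adjoint and tracial, and for $x = h_A \otimes h_B \in X$, $\tau_0|_{xA_0x^\ast}$ extends to the bounded functional $\tau_A|_{\overline{h_A A h_A}} \otimes \tau_B|_{\overline{h_B B h_B}}$ on the minimal tensor product (boundedness of tensor products of bounded functionals on the minimal tensor product follows by Jordan decomposition). Hence $\tau_0 \in T_{\mathbb R}(A_0, X)$, and Proposition \ref{p:extendtraces} delivers the unique trace $\tau_A \otimes \tau_B \in T_{\mathbb R}(A \otimes B)$.

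Positivity when both $\tau_A, \tau_B \geq 0$ follows by applying the Schur product theorem to the Gram matrices $[\tau_A(a_i^\ast a_j)]$ and $[\tau_B(b_i^\ast b_j)]$, giving $\tau_0(c^\ast c) \geq 0$ for $c \in A_0$; then the positivity claim in Proposition \ref{p:extendtraces} transfers this to $\tau_A \otimes \tau_B$. For continuity of $\tau_A \mapsto \tau_A \otimes \tau_B$, by Proposition \ref{p:extendtraces} it is enough to check continuity into $T_{\mathbb R}(A_0, X)$. Given $\tau_A^{(i)} \to \tau_A$ in $T_{\mathbb R}(A)$, Lemma \ref{l:compact}(iv) provides uniform boundedness of the restrictions to every $\cs$-subalgebra of $\Ped(A)$. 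Combined with boundedness of $\tau_B$ on each $\cs$-subalgebra of $\Ped(B)$, the functionals $\tau_A^{(i)} \otimes \tau_B$ are uniformly bounded on each corner $\overline{h_A A h_A} \otimes \overline{h_B B h_B}$ and converge pointwise on the dense subspace $\Ped(A) \odot \Ped(B)$, hence converge pointwise on the full corner, which is the required convergence in $T_{\mathbb R}(A_0, X)$.

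For the isomorphism statement, the hypothesis together with the lattice structure (Corollary \ref{c:lattice}) yields $T_{\mathbb R}(B) = \mathbb R \tau_B$. Given $\tau \in T_{\mathbb R}(A \otimes B)$, define the slice $\psi_h(b) := \tau(h \otimes b)$ for $h \in \Ped(A)_{\mathrm{sa}}$, $b \in \Ped(B)$; this is well-defined since $h \otimes b \in \Ped(A \otimes B)$. It is self-adjoint and tracial, the trace property obtained by splitting $h = h^+ - h^-$ and using $h^\pm \otimes b_1 b_2 = ((h^\pm)^{1/2} \otimes b_1)((h^\pm)^{1/2} \otimes b_2)$ together with the trace property of $\tau$. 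Continuity of $\psi_h$ is the key technical step: for a $\sigma$-unital hereditary $\cs$-subalgebra $D_B \subseteq \Ped(B)$ with strictly positive $e_B$, the hereditary $\cs$-subalgebra of $A \otimes B$ generated by $|h| \otimes e_B \in \Ped(A \otimes B)_+$ is $\overline{|h|A|h|} \otimes D_B$, which lies in $\Ped(A \otimes B)$ by Lemma \ref{l:Pedher}, so $\tau$ is bounded there and $\psi_h|_{D_B}$ is bounded. Therefore $\psi_h \in T_{\mathbb R}(B) = \mathbb R \tau_B$ and I may write $\psi_h = \tilde{\tau}_A(h)\tau_B$. The assignment $\tilde{\tau}_A$ is linear, self-adjoint, and tracial; fixing $b_0 \in \Ped(B)_+$ with $\tau_B(b_0) \neq 0$ (possible since $\tau_B$ is a non-zero positive trace) and writing $\tilde{\tau}_A(h) = \tau(h \otimes b_0)/\tau_B(b_0)$ shows $\tilde{\tau}_A \in T_{\mathbb R}(A)$, positive whenever $\tau$ is. By construction $\tilde{\tau}_A \otimes \tau_B$ agrees with $\tau$ on $A_0$, so uniqueness gives $\tau = \tilde{\tau}_A \otimes \tau_B$. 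Injectivity and continuity of the inverse $\tau \mapsto \tilde{\tau}_A$ both follow from the same explicit formula. The main obstacle is the continuity verification for $\psi_h$, which requires Lemma \ref{l:Pedher} to locate $h \otimes b$ inside a $\cs$-subalgebra of $\Ped(A \otimes B)$ on which $\tau$ is bounded.
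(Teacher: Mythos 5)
Your proof follows the same route as the paper's: Proposition \ref{p:extendtraces} applied to $A_0 = \Ped(A)\odot\Ped(B)$ with $X$ a set of elementary tensors, boundedness of $\tau_0$ on the corners $xA_0x^\ast \subseteq D_a\otimes D_b$, and, in the unique-trace case, an inverse obtained by slicing against a fixed $b_0\in\Ped(B)_+$ with $\tau_B(b_0)\neq 0$. The boundedness check, the Schur-product argument for positivity of $\tau_0$, and the construction and continuity of the inverse are all fine. Two steps, however, do not hold as written.

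First, your verification of $XA_0=A_0$ presumes that $\Ped(A)$ has exact local units, i.e.\ that for $a_1,\dots,a_n\in\Ped(A)$ there is $h_A\in\Ped(A)_+$ with $h_Aa_i=a_i$; Lemma \ref{l:Pedher} does not give this, and it is false in general. In $A=C([0,1],\mathcal K(\ell^2))$, let $q_1$ be the constant projection onto $\mathbb C e_1$ and let $q_2(t)$ be the projection onto $\cos(t)e_1+\sin(t)\xi(t)$, where $\xi$ is a continuous unit-vector path in $\overline{\mathrm{span}}\{e_2,e_3,\dots\}$ with $\xi(1/n)=e_{n+1}$. Both are projections, hence lie in $\Ped(A)_0$, but any $e\in A_+$ with $e(q_1+q_2)=q_1+q_2$ must satisfy $e(1/n)e_{n+1}=e_{n+1}$ for all $n$, which by norm-continuity of $e$ at $t=0$ contradicts compactness of $e(0)$. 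The identity $XA_0=A_0$ is still true, but it must be obtained by Cohen factorisation (as the paper does), which yields $a_i=h_Aa_i'$ with a common $h_A$ at the cost of changing the other factor.

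Second, and more seriously, the continuity of $\tau_A\mapsto\tau_A\otimes\tau_B$ is not established. You deduce uniform boundedness of the restrictions $\tau_A^{(i)}|_D$ from Lemma \ref{l:compact}(iv), but the set of terms of a weak$^\ast$-convergent \emph{net} need not be relatively compact, unlike a convergent sequence; already in $T_{\mathbb R}(C_0(\mathbb N))\cong\prod_{\mathbb N}\mathbb R$ there are nets converging to $0$ no tail of which is pointwise bounded. Without that uniform bound the interchange of limits in passing from $\Ped(A)\odot\Ped(B)$ to the closed corner fails. The structural point is that a linear functional on $T_{\mathbb R}(A)$ is weak$^\ast$-continuous only if it is a finite linear combination of evaluations $\hat a$ with $a\in\Ped(A)_{\mathrm{sa}}$, so checking continuity against $\hat z$ for $z\in\Ped(A\otimes B)_{\mathrm{sa}}$ requires actually exhibiting $w\in\Ped(A)_{\mathrm{sa}}$ with $(\tau_A\otimes\tau_B)(z)=\tau_A(w)$ for all $\tau_A$. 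The paper produces such a $w$ by reducing $z$ via Lemma \ref{l:Xfull}(b) to an element $y$ of $\overline{aAa}\otimes\overline{bBb}$ and taking $w=(\id\otimes\tau_B)(y)$ with the slice map (which also gives positivity for free); this slice-map step is the idea your argument is missing.
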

\begin{proof}
We use Proposition \ref{p:extendtraces} with $A_0 = \Ped(A) \odot \Ped(B) \subseteq A \otimes B$ and $X = \{ a \otimes b \mid a \in \Ped(A), b \in \Ped(B) \}$. To check that $X A_0 = A_0$, let $\sum_{i=1}^n a_i \otimes b_i \in A_0$ and apply Cohen factorisation to $\cs(a_1,\dots,a_n) \subseteq \Ped(A)$ and $\cs (b_1,\dots,b_n)$ acting on their $n$th powers to find $x \in X$ and $y \in A_0$ with $xy = \sum_i a_i \otimes b_i$. The other conditions are clear because $A_0 \subseteq \Ped(A \otimes B)$ is a dense $*$-subalgebra. Define a trace $\tau_0 \colon A_0 \to \mathbb C$ by setting
\[ \tau_0 \colon \sum_i a_i \otimes b_i \mapsto \sum_i \tau_A(a_i) \tau_B(b_i). \]
We claim that this is bounded on $x A_0 x^*$ for any $x = a \otimes b \in X$. Let $D_a \subseteq \Ped(A)$ and $D_b \subseteq \Ped(B)$ be the hereditary $\cs$-subalgebras generated by $a$ and $b$. Then $x A_0 x^* \subseteq D_a \otimes D_b$, upon which $\tau_0 = \tau_A |_{D_a} \otimes \tau_B |_{D_b}$ is bounded. We conclude by Proposition \ref{p:extendtraces} that there is a unique trace in $T_{\mathbb R}(A \otimes B)$ extending $\tau_0$.

Given $\tau_B \in T_{\mathbb R}(B)$, we check the continuity of $\tau_A \mapsto \tau_A \otimes \tau_B \colon T_{\mathbb R}(A) \to T_{\mathbb R}(A \otimes B)$ against $\hat x$ for an arbitrary $x \in \Ped(A \otimes B)_{\mathrm{sa}}$. By Lemma \ref{l:Xfull} (b) we may assume without loss of generality that $x = d^*(a \otimes b)^2d$ for some $d \in A \otimes B$, $a \in \Ped(A)_+$ and $b \in \Ped(B)_+$. Then $\hat x = \hat y$ for $y = (a \otimes b)dd^*(a \otimes b)$, which is an element of $\overline{(a \otimes b)(A \otimes B)(a \otimes b)} = \overline{aAa} \otimes \overline{bBb}$. Using the slice map $\id \otimes \tau_B \colon \overline{aAa} \otimes \overline{bBb} \to \overline{aAa}$, we compute $\hat x(\tau_A \otimes \tau_B) = (\tau_A \otimes \tau_B)(y) = \tau_A((\id \otimes \tau_B)(y))$, which is continuous in $\tau_A$. This argument also shows that $\tau_A \otimes \tau_B$ is positive if $\tau_A$ and $\tau_B$ are both positive.

Suppose $\tau_B$ is the unique non-zero positive trace in $T_{\mathbb R}(B)$ up to scaling. We construct a continuous inverse to $\tau_A \mapsto \tau_A \otimes \tau_B \colon T_{\mathbb R}(A) \to T_{\mathbb R}(A \otimes B)$ as follows. Given $\tau \in T_{\mathbb R}(A\otimes B)$ and $a\in \Ped(A)_{\mathrm{sa}}$ the self-adjoint trace $\Ped(B) \to \mathbb C$ given by $b\mapsto \tau(a \otimes b)$ is continuous in the topology from Definition \ref{d:locconvex}. Therefore there is a real scalar $\sigma_\tau(a)$ such that $\tau(a\otimes b) = \sigma_\tau(a) \tau_B(b)$ for all $b\in \Ped(B)$. Letting $b_0 \in \Ped(B)_+$ such that $\tau_B(b_0)=1$ we have $\sigma_\tau(a) = \tau(a \otimes b_0)$ and thus $\sigma_\tau \in T_{\mathbb R}(A)$ which is positive if $\tau$ is positive.
By construction we have $\tau = \sigma_\tau \otimes \tau_B$ and $\sigma_{\tau_A \otimes \tau_B} = \tau_A$.  
As $\sigma_\tau = \tau(-\otimes b_0)$ it follows that $\tau \mapsto \sigma_\tau$ is continuous.
\end{proof}

In Section \ref{s:groupoid} we will also see applications to twisted étale groupoid $\cs$-algebras.

\section{Affine functions on positive traces}\label{s:Aff}

For a convex subset $K$ of a topological vector space we let $\Aff K$ denote the affine continuous real functions on $K$. If $0 \in K$ we let $\Aff_0 K$ denote the subspace of $\Aff K$ of functions vanishing at $0$. These have a positive cone consisting of functions $f$ such that $f(K) \subseteq [0,\infty)$.

We let $T_1(A), T_{\leq 1}(A), T_+^{\mathrm{b}}(A), T_{\mathbb R}^{\mathrm{b}}(A) \subseteq A^\ast$ denote the convex sets of boun\-ded traces which are respectively states, positive contractions, positive, and self-adjoint. We equip each with the weak$^\ast$-topology from $A^\ast$. Note that on unbounded sets of traces this topology (at least priori) differs from the weak$^\ast$-topology coming from $\Ped(A)^\ast$. 

Each of these has a corresponding function space, $\Aff T_1(A), \Aff_0 T_{\leq 1}(A)$, $\Aff_0 T_+^{\mathrm{b}}(A)$ and $T_{\mathbb R}^{\mathrm{b}}(A)^\ast$ respectively. We consider each of these as an ordered topological vector space with the topology induced by the supremum norm with respect to the relevant subset of contractive traces and positive cone of functions which are positive on positive traces. These topologies coincide with the compact-open topology. Through restriction there are natural linear maps
\[ T^{\mathrm{b}}_{\mathbb R}(A)^\ast \to \Aff_0 T^{\mathrm{b}}_+(A) \to \Aff_0 T_{\leq 1}(A) \to \Aff T_1(A) \]
which are continuous, injective and order-preserving. The following is well-known to experts.

\begin{proposition}\label{p:Affbounded}
    Let $A$ be a $\cs$-algebra.  
    Then the canonical maps 
    \begin{equation}\label{eq:Aff}
        T_{\mathbb R}^{\mathrm{b}}(A)^\ast \to \Aff_0 T_+^{\mathrm{b}}(A) \to \Aff_0 T_{\leq 1}(A)
    \end{equation} 
    are isomorphisms of ordered topological vector spaces.
        
    If $T_1(A)$ is compact then the canonical map 
    \begin{equation}
        T_{\mathbb R}^{\mathrm{b}} (A)^\ast \to \Aff T_1(A)
    \end{equation} 
    is an isomorphism of ordered topological vector spaces.
\end{proposition}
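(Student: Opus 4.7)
The plan is to identify each of the spaces $T_{\mathbb R}^{\mathrm{b}}(A)^\ast$, $\Aff_0 T_+^{\mathrm{b}}(A)$, $\Aff_0 T_{\leq 1}(A)$ (and $\Aff T_1(A)$ in the second statement) with the common quotient $A_{\mathrm{sa}}/I$, where $I := \{a \in A_{\mathrm{sa}} : \tau(a) = 0 \text{ for all } \tau \in T_+^{\mathrm{b}}(A)\}$. Since the canonical maps in the chain are already observed to be continuous, injective, and order-preserving, only surjectivity remains, together with the matching of norms and order structures.

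I would first note that $A_{\mathrm{sa}} \to T_{\mathbb R}^{\mathrm{b}}(A)^\ast$, $a \mapsto \hat a|_{T_{\mathbb R}^{\mathrm{b}}(A)}$, is surjective by standard weak$^\ast$-duality: every weak$^\ast$-continuous $\mathbb R$-linear functional on the subspace $T_{\mathbb R}^{\mathrm{b}}(A) \subseteq A^\ast$ extends by Hahn--Banach to a weak$^\ast$-continuous functional on $A^\ast$, given by evaluation against some $a \in A_{\mathrm{sa}}$. The key step is to show every $g \in \Aff_0 T_{\leq 1}(A)$ is of the form $\hat a|_{T_{\leq 1}(A)}$ for some $a \in A_{\mathrm{sa}}$. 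Here $T_{\leq 1}(A)$ is weak$^\ast$-compact convex in $A^\ast$ (as the intersection of the weak$^\ast$-compact unit ball of $A^\ast$ with the weak$^\ast$-closed cone of positive tracial functionals), so I would invoke the classical Kadison-type extension theorem for continuous affine functions on weak$^\ast$-compact convex subsets of a dual Banach space (as in Alfsen's \emph{Compact Convex Sets and Boundary Integrals}): $g$ extends to a weak$^\ast$-continuous affine $\tilde g: A^\ast \to \mathbb R$ necessarily of the form $\tilde g(\phi) = \phi(a) + c$ with $a \in A$ and $c \in \mathbb R$. The vanishing $g(0) = 0$ forces $c = 0$, and real-valuedness on self-adjoint tracials allows $a \in A_{\mathrm{sa}}$. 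The topologies agree because each space carries the supremum norm over $T_{\leq 1}(A)$, which by Hahn--Banach duality (using that $T_{\mathbb R}^{\mathrm{b}}(A)$ is weak$^\ast$-closed in $A^\ast$, so $I^\perp = T_{\mathbb R}^{\mathrm{b}}(A)$) equals the quotient norm on $A_{\mathrm{sa}}/I$; the orders agree because positivity corresponds in each formulation to $\tau(a) \geq 0$ for all $\tau \in T_+^{\mathrm{b}}(A)$.

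For the second statement, when $T_1(A)$ is compact, the map $T_{\mathbb R}^{\mathrm{b}}(A)^\ast \to \Aff T_1(A)$ is an isometric embedding of Banach spaces: isometricity follows from $T_{\leq 1}(A) = \{0\} \cup [0,1] \cdot T_1(A)$ combined with positive-homogeneity of $\hat a$, giving $\sup_{T_{\leq 1}(A)} |\hat a| = \sup_{T_1(A)} |\hat a|$, and $T_{\mathbb R}^{\mathrm{b}}(A)^\ast \cong A_{\mathrm{sa}}/I$ is Banach in this norm by the Hahn--Banach identification above. Hence the image is closed. For density, the extension theorem applied to $T_1(A)$ gives any $g \in \Aff T_1(A)$ the form $g = \hat a|_{T_1(A)} + c$ with $a \in A_{\mathrm{sa}}$ and $c \in \mathbb R$; for an approximate identity $(e_\lambda)$ in $A$, the net $\tau(e_\lambda)$ increases pointwise to $\|\tau\| = 1$ on $T_1(A)$, and by Dini's theorem this convergence is uniform on the compact set $T_1(A)$, so $\widehat{a + c e_\lambda}|_{T_1(A)} \to g$ uniformly; combined with closedness of the image, this places $g$ in the image.

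The main obstacle is the careful invocation of the Kadison-type extension theorem and verifying that the extension is weak$^\ast$-continuous, real-valued, and realized by an element of $A_{\mathrm{sa}}$. In the second part, the Dini argument is essential to absorb the constant $c$ in the non-unital setting; the remaining verifications on topology, order, and matching of kernels reduce to Hahn--Banach duality and routine bookkeeping.
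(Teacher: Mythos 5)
Your key step rests on a theorem that does not exist: it is \emph{not} true that every weak$^\ast$-continuous affine function on a weak$^\ast$-compact convex subset $K$ of a dual Banach space extends to a weak$^\ast$-continuous affine function on the whole dual. For a counterexample take $X=c_0$, $X^\ast=\ell^1$, and $K=\{x\in\ell^1 : 0\le x_n\le 2^{-n}\ \text{for all } n\}$; this set is convex, norm-bounded and weak$^\ast$-closed, hence weak$^\ast$-compact, and the affine function $f(x)=\sum_n x_n$ vanishes at $0$ and is weak$^\ast$-continuous on $K$ (the tails are uniformly small), yet if $f=\hat y|_K+c$ with $y\in c_0$ then $c=0$ and evaluating at $2^{-n}e_n$ forces $y_n=1$ for all $n$, a contradiction. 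The correct classical statement (this is what Alfsen's book provides) is only that restrictions of globally continuous affine functions are uniformly \emph{dense} in $\Aff K$; exact extension is a special property of the particular set $K$ that must be proved, and for $K=T_{\leq 1}(A)$ it is essentially the content of the proposition itself. So your argument is ungrounded at precisely the point where the work happens, and the same false extension principle is also what your treatment of $\Aff T_1(A)$ leans on.

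What makes the statement true for $T_{\leq 1}(A)$ is the Jordan decomposition of bounded self-adjoint \emph{tracial} functionals into positive \emph{tracial} functionals (Proposition \ref{p:Jordan1}(a)). The paper extends $f\in\Aff_0 T_{\leq 1}(A)$ linearly to $T^{\mathrm b}_{\mathbb R}(A)$, identifies $T^{\mathrm b}_{\mathbb R}(A)$ with the dual of the Banach space $(A/A_0)_{\mathrm{sa}}$ (your $A_{\mathrm{sa}}/I$), and checks weak$^\ast$-continuity of the extension on the unit ball by writing an ultranet $(\tau_i)_i$ there as $\tau_i^+-\tau_i^-$ with $\tau_i^\pm\in T_{\leq 1}(A)$ and passing to limits; Krein--Smulian then upgrades this to genuine weak$^\ast$-continuity, i.e.\ representation by an element of $A_{\mathrm{sa}}$. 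If you want to salvage your route, you could combine the correct density statement with the observation that the unit ball of $T^{\mathrm b}_{\mathbb R}(A)$ lies in $T_{\leq 1}(A)-T_{\leq 1}(A)$ (again by the tracial Jordan decomposition), so that $\|\hat a\|_{T_{\leq 1}(A)}$ is equivalent to the quotient norm on $A_{\mathrm{sa}}/I$ and the image of $A_{\mathrm{sa}}$ is closed in $\Aff_0 T_{\leq 1}(A)$ --- but either way the tracial Jordan decomposition is the indispensable ingredient your proposal omits. (Your Dini argument for absorbing the constant when $T_1(A)$ is compact is a reasonable alternative to the paper's normalisation trick, but it inherits the same gap.)
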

\begin{proof}
   The map $\Aff_0 T_+^{\mathrm{b}}(A) \to \Aff_0 T_{\leq 1}(A)$ is clearly injective, and the map $T_{\mathbb R}^{\mathrm{b}}(A)^\ast \to \Aff_0 T_+^{\mathrm{b}}(A)$ is injective by the Jordan decomposition (the Jordan decomposition of bounded traces are again bounded traces (Proposition \ref{p:Jordan1})). To see that the maps in \eqref{eq:Aff} are surjective, it suffices to show that the composition is surjective. Letting $f\in \Aff_0 T_{\leq 1}(A)$, this extends uniquely to a linear functional $\tilde f \colon T_{\mathbb R}^{\mathrm{b}}(A) \to \mathbb R$ by $\tilde f (\lambda_1 \tau_1- \lambda_2 \tau_2) = \lambda_1 f(\tau_1) - \lambda_2 f(\tau_2)$ for $\tau_1,\tau_2\in T_1(A)$ and $\lambda_1,\lambda_2 \in [0,\infty)$. To see that $\tilde f\in T_{\mathbb R}^{\mathrm{b}}(A)^\ast$ we must show that $\tilde f$ is continuous.

    Let $A_0 = \{ a \in A : \tau(a) = 0 \textrm{ for all }\tau \in T_{\mathbb R}^{\mathrm{b}}(A)\}$. Then $A_0\subseteq A$ is a closed $\ast$-invariant subspace and $T_{\mathbb R}^{\mathrm{b}}(A) \cong ((A/A_0)_{\mathrm{sa}})^\ast$ canonically (the dual space of the real Banach space $(A/A_0)_{\mathrm{sa}}$). By the Krein--Smulian theorem it suffices to show that $\tilde f$ is continuous on the closed unit ball of $((A/A_0)_{\mathrm{sa}})^\ast$ (see for instance \cite[Corollary 2.7.9]{Megginson}).

    Let $(\tau_i)_i$ be an ultranet in $T_{\mathbb R}^{\mathrm{b}}(A)$ such that $\| \tau_i\| \leq 1$.   Let $(\tau_i^{+}, \tau_i^-)$ be the Jordan decomposition of $\tau_i$. Then the nets $(\tau_i^{+})_i$ and $(\tau_i^-)_i$ are ultranets in $T_{\leq 1}(A)$ and hence have limits $\rho_+$ and $\rho_-$ respectively in $T_{\leq 1}(A)$. Note that $(\tau_i)_i$ converges to $\tau := \rho_+- \rho_-$.
    As $f$ is continuous on $T_{\leq 1}(A)$ we get
    \begin{equation}
        \tilde f(\tau_i) = f(\tau_i^+) - f(\tau_i^-) \to f(\rho_+) - f(\rho_-) = f(\tau)
    \end{equation}
    so $\tilde f$ is continuous.

    It is obvious that that the bijections from \eqref{eq:Aff} are continuous and isomorphisms of ordered vector spaces. To show that these are homeomorphisms, it suffices to show that their composition is a homeomorphism. Let $(f_i)_i$ be a net in $T_{\mathbb R}^{\mathrm{b}}(A)^\ast$ such that $(f_i|_{T_{\leq 1}(A)})_i$ converges uniformly to zero. Let $S\subseteq T_{\mathbb R}^{\mathrm{b}}(A)$ be a compact set. Arguing as in Lemma \ref{l:compact}\footnote{Since the weak$^\ast$-topology on $T_{\mathbb R}^{\mathrm{b}}(A)$ coming from $\Ped(A)^\ast$ and from $A^\ast$ differ, we cannot simply apply Lemma \ref{l:compact}, but the same argument works.} we pick $M>0$ such that $\sup_{\tau\in S} \| \tau\| < M$. Hence for every $\tau \in S$ with Jordan decomposition $(\tau^+, \tau^-)$ we have $\tfrac{1}{M} \tau^\pm \in T_{\leq 1}(A)$. Hence
    \begin{equation}
        \sup_{\tau\in S} |f_i(\tau)| \leq M \sup_{\tau\in S}  (|f_i(\tfrac{1}{M}(\tau^+))| + |f_i(\tfrac{1}{M}(\tau^-))|) \to 0
    \end{equation}
    and therefore $T_{\mathbb R}^{\mathrm{b}}(A)^\ast \to \Aff_0 T_{\leq 1}(A)^\ast$ is a homeomorphism.

    Assume now that $T_1(A)$ is also compact. The map $T_{\mathbb R}^{\mathrm{b}}(A)^\ast \to \Aff T_1(A)$ is (as for $\Aff_0 T_{\leq 1}(A)$) injective. For surjectivity we may (again as above) let $f\in \Aff T_1(A)$ and extend it to $\tilde f\colon T_{\mathbb R}^{\mathrm{b}}(A) \to \mathbb R$. To show that $\tilde f$ is continuous it suffices by the previous part to show that $\tilde f$ is continuous on $T_{\leq 1}(A)$.

    Let $(\tau_i)_i$ be an ultranet in $T_{\leq 1}(A)$ with limit $\tau$. By the definition of ultranets, either $(\tau_i)_i$ is eventually equal to zero, or eventually contained in $T_{\leq 1}(A) \setminus \{0\}$. If $(\tau_i)_i$ is eventually zero then clearly $\lim_i \tilde f(\tau_i) = 0$ so we may assume that each $\tau_i \neq 0$. As $T_1(A)$ is compact the ultranet $(\tfrac{1}{\| \tau_i\|} \tau_i)_i$ has a limit $\rho \in T_1(A)$. Similarly, let $M = \lim_i \| \tau_i\|$. Then $\tau = M \rho$ and since $f$ is continuous on $T_1(A)$ we get
    \[
        \lim_i \tilde f(\tau_i) = \lim_i \| \tau_i\| f(\tfrac{1}{\| \tau_i\|} \tau_i) = M f(\rho) = \tilde f(\tau).
    \]
    Hence $\tilde f$ is continuous on $T_{\leq 1}(A)$ and thus continuous.

    Finally, for every $f\in T_{\mathbb R}^{\mathrm{b}}(A)^\ast$ we clearly have
    \begin{equation}
        \sup_{\tau\in T_{\leq 1}(A)} |f(\tau)| = \sup_{\tau\in T_1(A)}|f(\tau)|
    \end{equation}
    and therefore $T_{\mathbb R}^{\mathrm{b}}(A)^\ast \to \Aff T_1(A)$ is an isomorphism of ordered topological vector spaces.
\end{proof}

When we consider unbounded traces, it becomes less clear how to recover the information of the entire self-adjoint trace space $T_{\mathbb R}(A)$ from the positive trace space $T_+(A)$. We consider $\Aff_0 T_+(A)$ as an ordered topological vector space with the compact-open topology coming from $T_+(A)$.

\begin{proposition}\label{p:TAffinj}
    Let $A$ be a $\cs$-algebra. Restriction from $T_{\mathbb R}(A)$ to $T_+(A)$ defines an embedding 
    \begin{equation}
        T_{\mathbb R}(A)^\ast \to \Aff_0 T_+(A)
    \end{equation}
    of ordered topological vector spaces. 
\end{proposition}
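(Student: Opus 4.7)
The plan is to verify in turn that the restriction map is well-defined, linear, injective, order-preserving and order-reflecting, and finally a topological embedding. The first four points are essentially bookkeeping. For well-definedness, given $f\in T_{\mathbb R}(A)^\ast$, the restriction $f|_{T_+(A)}$ is affine, vanishes at $0$, and is continuous (since $T_+(A)$ carries the subspace topology from $T_{\mathbb R}(A)$), hence lies in $\Aff_0 T_+(A)$. Linearity is immediate; injectivity follows from Corollary \ref{c:contlincomb}, since $T_{\mathbb R}(A)$ is the linear span of $T_+(A)$ and so a linear functional vanishing on $T_+(A)$ is zero. Order preservation and reflection are built into the definitions: the positive cones on both sides are given by non-negativity on $T_+(A)$.

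The substantive content is showing that the compact-open topology on $T_{\mathbb R}(A)^\ast$ coincides with the topology it inherits from $\Aff_0 T_+(A)$. One direction is free: any compact $K'\subseteq T_+(A)$ is also a compact subset of $T_{\mathbb R}(A)$, and the resulting seminorms $\|\cdot\|_{K'}$ on the two sides literally coincide, so the map is continuous. For the converse, given a compact $K\subseteq T_{\mathbb R}(A)$, I would exhibit a compact $K'\subseteq T_+(A)$ with $\|f\|_K \leq 2\|f\|_{K'}$ for every $f\in T_{\mathbb R}(A)^\ast$. The natural candidate is $K' := \overline{K^+\cup K^-}$ where $K^\pm := \{\tau^\pm : \tau\in K\}$ collects the Jordan parts from Proposition \ref{p:JordanPed}; the bound then comes from $f(\tau) = f(\tau^+) - f(\tau^-)$ and the triangle inequality.

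The main obstacle is therefore verifying that $K^+\cup K^-$ is relatively compact in $T_{\mathbb R}(A)$ with closure inside $T_+(A)$. The latter is clear because $T_+(A)$ is weak$^\ast$-closed in $T_{\mathbb R}(A)$. For the relative compactness I plan to apply Lemma \ref{l:compact}: by Proposition \ref{p:upwards}, every $\cs$-subalgebra of $\Ped(A)$ sits inside a hereditary one, so it is enough to bound $\sup_{\sigma\in K^+\cup K^-}\|\sigma|_D\|$ for each hereditary $\cs$-subalgebra $D\subseteq \Ped(A)$. On such a $D$, Lemma \ref{l:Jordan2} identifies the Jordan decomposition of $\tau|_D$ with the restriction of the Jordan decomposition of $\tau$, yielding $\|\tau|_D\| = \|\tau^+|_D\| + \|\tau^-|_D\|$ and hence $\|\tau^\pm|_D\| \leq \|\tau|_D\|$. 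Applying Lemma \ref{l:compact} to the already-compact $K$ produces the required uniform bound and finishes the argument.
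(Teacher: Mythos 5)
Your proposal is correct and follows essentially the same route as the paper: both arguments reduce to showing that the Jordan parts $K^\pm$ of a compact $K\subseteq T_{\mathbb R}(A)$ form a relatively compact subset of $T_+(A)$, using Proposition \ref{p:upwards} to pass to hereditary $\cs$-subalgebras, Lemma \ref{l:Jordan2}/Proposition \ref{p:JordanPed} to get $\|\tau^\pm|_D\|\leq\|\tau|_D\|$, and Lemma \ref{l:compact} in both directions, finishing with the triangle inequality $|f(\tau)|\leq|f(\tau^+)|+|f(\tau^-)|$.
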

\begin{proof}
    The map is clearly continuous and maps the positive cone to the positive cone, and it is injective by Jordan decompositions (as in the proof of Proposition \ref{p:Affbounded}). It remains to show that the inverse map on the image is continuous.

    Let $(f_i)_i$ be a net in $T_{\mathbb R}(A)^\ast$ which converges to $0$ uniformly on compact subsets of $T_+(A)$. Let $S\subseteq T_{\mathbb R}(A)$ be compact, and let $S^\pm = \{ \tau^\pm : \tau\in S\}$. For every $\cs$-subalgebra $D\subseteq \Ped(A)$ we have $DAD \subseteq \Ped(A)$ by Proposition \ref{p:upwards}. Hence by Lemma \ref{l:compact} we have
    \begin{equation}
        \sup_{\tau\in S} \| \tau^\pm|_D\| = \sup_{\tau\in S} \|\tau^\pm|_{DAD}\| \leq \sup_{\tau\in S} \| \tau|_{DAD}\| < \infty.
    \end{equation}
    Therefore, the sets $S^+$ and $S^-$ are relatively compact by Lemma \ref{l:compact}, and thus $(f_i)_i$ converges uniformly to zero on both sets. Hence
    \begin{equation}
        \sup_{\tau\in S} |f_i(\tau)| \leq \sup_{\tau\in S}( |f_i(\tau^+)| + |f_i(\tau^-)|)  \to 0. \qedhere
    \end{equation}
\end{proof}

Notice that by Theorem \ref{t:PedTcont}, surjectivity of the embedding $T_{\mathbb R}(A)^\ast \to \Aff_0 T_+(A)$ (and thus that it is an isomorphism) is equivalent to a positive answer to the Trace Question (Question \ref{q:traces}) for $A$. 
The Trace Question is therefore essentially a question of whether the information of $T_{\mathbb R}(A)$ can be recovered from $T_+(A)$.

We have a positive answer to the Trace Question for a large class of $\cs$-algebras. First note that whenever $\Ped(A) = A$, the map $T_{\mathbb R}(A)^\ast \to \Aff_0 T_+(A)$ is surjective by Proposition \ref{p:Affbounded}. Moreover, the answer depends only on the ordered topological vector space structure of $T_{\mathbb R}(A)$, so by Corollary \ref{c:Tfullher} it is stable under Morita equivalence. It is well-known (and elementary to prove) that the primitive ideal space of a $\cs$-algebra $A$ is compact if and only if $A$ contains a full hereditary $\cs$-subalgebra $B$ such that $\Ped(B) = B$, so this already affirms the question for any $\cs$-algebra with compact primitive ideal space. 

We extend this positive answer to any $\cs$-algebra $A$ which exhibits compactness more generally at the level of its positive trace space. A \textit{compact base} for the cone $T_+(A)$ of positive traces is a compact subset $C \subseteq T_+(A)$ not containing $0$ such that for each non-zero trace $\tau \in T_+(A)$ there is a unique $\lambda \in (0,\infty)$ with $\lambda \tau \in C$. The cone $T_+(A)$ has a compact base if and only if it is locally compact \cite[Theorem II.2.6]{Alfsen-book}\footnote{A cone $P$ is said to be proper if $P \cap (-P) = \{0\}$.}.

\begin{proposition}\label{p:compact base}
Let $A$ be a $\cs$-algebra whose cone $T_+(A)$ of positive traces has a compact base. Then the restriction map
 \begin{equation*}
        T_{\mathbb R}(A)^\ast \to \Aff_0 T_+(A)
 \end{equation*}
is an isomorphism of ordered topological vector spaces.
\begin{proof}
By Proposition \ref{p:TAffinj} it suffices to show the map is surjective. 
Let $f \in \Aff_0 T_+(A)$ and let $\tilde f \colon T_{\mathbb R}(A) \to \mathbb R$ be its unique linear extension. We must show that $\tilde f$ is weak$^*$-continuous. Let $C$ be a compact base for $T_+(A)$. As $C$ does not contain $0$, the open sets $\{ \tau \in C \mid \tau(e) > 1 \}$ associated to non-zero positive elements $e \in \Ped(A)$ cover $C$. By compactness and by taking a finite sum there is a positive element $e \in \Ped(A)$ such that $\tau(e) \geq 1$ for all $\tau \in C$. Set $B = \overline{eAe}$ and let $\Phi \colon T_{\mathbb R}(A) \to T^{\mathrm{b}}_{\mathbb R}(B)$ denote the restriction. Because $e\in B$ and $C$ is a base, the only positive trace in the kernel of $\Phi$ is $0$. As $\Phi$ is a lattice homomorphism by Corollary \ref{c:herlattice}, it must be injective on $T_+(A)$ and by linearity also on $T_{\mathbb R}(A)$.

Let $\bar f \colon \Phi(T_{\mathbb R}(A)) \to \mathbb R$ be the induced linear functional $\bar f (\tau|_B) = \tilde f (\tau)$ for $\tau\in T_{\mathbb R}(A)$. We claim that $\bar f$ is continuous and thus so is $\tilde f = \bar f \circ \Phi$. We show the equivalent statement that $\ker \bar f = \Phi(\ker \tilde f)$ is weak$^\ast$-closed in $T_{\mathbb R}^{\mathrm{b}}(B)$. 
By Krein--Smulian it suffices to show that the unit ball of $\ker \bar f$ is weak$^\ast$-closed. Let $(\tau_i)_i$ be an ultranet in $\ker \tilde f$ with $ \lVert \tau_i \restriction_B \rVert \leq 1$, and let $\tau_B \in T_{\mathbb R}^{\mathrm{b}}(B)$ be its limit. Taking Jordan decompositions $\tau_i^\pm$ of $\tau_i$, there are $\lambda_i^\pm \in [0,\infty)$ and $\rho_i^\pm \in C$ with $\lambda_i^\pm \rho_i^\pm = \tau_i^\pm$ and we have $\tau_i^\pm \restriction_B \in T_{\leq 1}(B)$. It follows that $\lambda_i^\pm \leq \lVert e \rVert$ for each $i$, and so the ultranets $(\lambda_i^\pm)_i$ have limits $\lambda^\pm$. The ultranets $(\rho_i^\pm)_i$ also have limits $\rho^\pm \in C$ by compactness and so $\tau_i^\pm \to \lambda^\pm \rho^\pm \in T_+(A)$. As $f$ is continuous we have 
\[
0 = \lim_i\tilde f(\tau_i) = \lim_i( \lambda_i^+ f(\rho_i^+) - \lambda_i^- f(\rho_i^-)) = \lambda^+ f(\rho^+) - \lambda^- f(\rho^-).
\] 
In particular, we have $\tau = \lambda^+ \rho^+ - \lambda^- \rho^- \in \ker \tilde f$ with $\tau_i \to \tau$, and so $\tau_B = \tau \restriction_B \in \ker \bar f$.
\end{proof}
\end{proposition}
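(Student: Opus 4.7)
By Proposition~\ref{p:TAffinj}, the restriction map $T_{\mathbb R}(A)^\ast \to \Aff_0 T_+(A)$ is already a continuous order-embedding onto its image with continuous inverse, so the entire task is to prove surjectivity. Given $f \in \Aff_0 T_+(A)$, Corollary~\ref{c:contlincomb} supplies a unique linear extension $\tilde f \colon T_{\mathbb R}(A) \to \mathbb R$ via Jordan decomposition, and the core of the proof is showing that $\tilde f$ is weak$^\ast$-continuous.

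The plan is to transfer $\tilde f$ to a linear functional on a space of \emph{bounded} self-adjoint traces, where the Krein--Smulian theorem becomes available. Since the compact base $C$ of $T_+(A)$ avoids $0$, a compactness argument produces an element $e \in \Ped(A)_+$ with $\tau(e) \geq 1$ for all $\tau \in C$: cover $C$ by the weak$^\ast$-open sets $\{\tau \in C : \tau(a) > 1\}$ indexed over $a \in \Ped(A)_+$ and sum a finite subcover. Setting $B = \overline{eAe}$, every $\tau \in T_+(A)$ restricts to a bounded trace on $B$, so restriction defines a map $\Phi \colon T_{\mathbb R}(A) \to T_{\mathbb R}^{\mathrm{b}}(B)$. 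Because $e \in B$ separates $T_+(A)$ from $0$, the map $\Phi$ is injective on the positive cone, and Corollary~\ref{c:herlattice} (that $\Phi$ is a lattice homomorphism) upgrades this to injectivity on all of $T_{\mathbb R}(A)$. Thus $\tilde f$ factors as $\tilde f = \bar f \circ \Phi$ for a uniquely determined linear $\bar f$ on $\Phi(T_{\mathbb R}(A))$, and continuity of $\tilde f$ reduces to continuity of $\bar f$.

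To establish continuity of $\bar f$, I would show that $\ker \bar f$ is weak$^\ast$-closed in $T_{\mathbb R}^{\mathrm{b}}(B)$, which by the Krein--Smulian theorem reduces to verifying weak$^\ast$-closedness of its intersection with the closed unit ball. Given an ultranet $(\tau_i) \subseteq \ker \tilde f$ with $\|\tau_i|_B\| \leq 1$ whose restriction converges on $B$ to some $\tau_B$, write the Jordan decompositions $\tau_i = \tau_i^+ - \tau_i^-$ and scale $\tau_i^\pm = \lambda_i^\pm \rho_i^\pm$ with $\rho_i^\pm \in C$ and $\lambda_i^\pm \geq 0$. The estimate $\lambda_i^\pm \leq \tau_i^\pm(e) \leq \|\tau_i^\pm|_B\|\,\|e\| \leq \|e\|$ keeps the scalars uniformly bounded; compactness of $C$ supplies ultranet limits $\rho^\pm \in C$, and these combine to give limits $\lambda^\pm \rho^\pm$ of the nets $\tau_i^\pm$ inside $T_+(A)$. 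Continuity of $f$ then yields $0 = \tilde f(\tau_i) \to \tilde f(\tau)$ for $\tau := \lambda^+\rho^+ - \lambda^-\rho^-$, and restricting to $B$ identifies $\tau_B = \tau|_B$, placing $\tau_B$ in $\ker \bar f$.

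The step I expect to be the main obstacle is precisely this Krein--Smulian reduction: the delicate point is converting the hypothesis $\|\tau_i|_B\| \leq 1$ on the restrictions into a uniform bound on the scalars $\lambda_i^\pm$ arising from Jordan decompositions of potentially unbounded traces on $A$. This is exactly where the compact base hypothesis is essential, as it turns the single lower bound $\tau(e) \geq 1$ on $C$ into the global control needed to run the ultranet/compactness argument on both Jordan components simultaneously. Once this uniform control is secured, the rest of the argument is a routine combination of the lattice structure from Corollary~\ref{c:herlattice} and the continuity of $f$ on $T_+(A)$.
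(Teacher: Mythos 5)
Your proposal is correct and follows essentially the same route as the paper's proof: the same choice of $e$ and $B=\overline{eAe}$, the same injectivity argument via the lattice homomorphism of Corollary \ref{c:herlattice}, and the same Krein--Smulian/ultranet argument with Jordan decompositions scaled into the compact base. The only addition is that you spell out the estimate $\lambda_i^\pm \leq \tau_i^\pm(e) \leq \|e\|$, which the paper leaves implicit.
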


Proposition \ref{p:compact base} covers the situation where all traces are bounded and the tracial state space is compact.

We moreover affirm Question \ref{q:traces} for a class of $\cs$-algebras containing every commutative $\cs$-algebra. These $\cs$-algebras satisfy a strong version of Lemma \ref{l:Pedher} with closed ideals instead of hereditary $\cs$-subalgebras.

\begin{proposition}\label{p:TAff}
    Suppose that $A$ is a $\cs$-algebra with the following property: for all $a_1,\dots, a_n \in \Ped(A)$, the two-sided closed ideal $J\subseteq A$ generated by $a_1,\dots,a_n$ is contained in $\Ped(A)$. Then the restriction map 
    \begin{equation}\label{eq:TRAff}
        T_{\mathbb R}(A)^\ast \to \Aff_0 T_+(A)
    \end{equation}
    is an isomorphism of ordered topological vector spaces.
\end{proposition}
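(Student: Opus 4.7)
By Proposition \ref{p:TAffinj} the restriction map $T_{\mathbb R}(A)^\ast \to \Aff_0 T_+(A)$ is already a topological embedding of ordered vector spaces, so only surjectivity remains. Given $f \in \Aff_0 T_+(A)$, affinity together with $f(0)=0$ makes $f$ additive and positively homogeneous on the cone $T_+(A)$, so combined with $T_{\mathbb R}(A) = T_+(A) - T_+(A)$ from Corollary \ref{c:contlincomb}, $f$ extends uniquely to a linear functional $\tilde f \colon T_{\mathbb R}(A) \to \mathbb R$ by $\tilde f(\rho^+ - \rho^-) = f(\rho^+) - f(\rho^-)$, which is well-defined on any decomposition $\rho^\pm \in T_+(A)$ by additivity of $f$. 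By Theorem \ref{t:PedTcont} it then suffices to show $\tilde f$ is weak$^\ast$-continuous, since this will force $\tilde f = \hat a$ for some $a \in \Ped(A)_{\mathrm{sa}}$ and in turn $f = \hat a|_{T_+(A)}$.

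My plan for continuity is to test against ultranets. Suppose $(\tau_i)_i \to \tau$ is an ultranet in $T_{\mathbb R}(A)$. Pointwise boundedness of $\tau_i(a)$ for each $a \in \Ped(A)$ together with Lemma \ref{l:compact} makes $\{\tau_i\}$ relatively compact. Writing the Jordan decompositions $\tau_i = \tau_i^+ - \tau_i^-$ from Proposition \ref{p:JordanPed}, I will use that each $\cs$-subalgebra $D \subseteq \Ped(A)$ has hereditary envelope $\overline{DAD}$ also sitting inside $\Ped(A)$ by Proposition \ref{p:upwards}, and that Lemma \ref{l:Jordan2} gives
\[
\sup_i \|\tau_i^\pm|_D\| \leq \sup_i \|\tau_i^\pm|_{\overline{DAD}}\| \leq \sup_i \|\tau_i|_{\overline{DAD}}\| < \infty,
\]
so $\{\tau_i^\pm\}$ is also relatively compact in $T_+(A)$ by Lemma \ref{l:compact}. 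Being ultranets in relatively compact subsets of a Hausdorff space, $(\tau_i^\pm)_i$ converge to some $\rho^\pm \in T_+(A)$; continuity of subtraction and uniqueness of weak$^\ast$-limits then force $\tau = \rho^+ - \rho^-$. Finally, continuity of $f$ on $T_+(A)$ gives $f(\tau_i^\pm) \to f(\rho^\pm)$, and linearity of $\tilde f$ yields
\[
\tilde f(\tau_i) = f(\tau_i^+) - f(\tau_i^-) \longrightarrow f(\rho^+) - f(\rho^-) = \tilde f(\rho^+ - \rho^-) = \tilde f(\tau),
\]
proving weak$^\ast$-continuity.

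The main obstacle I foresee is that this ultranet approach does not visibly invoke the proposition's hypothesis, so a careful audit of the compactness step would be warranted: the reduction from $\cs$-subalgebras to hereditary envelopes in the bound above uses only Proposition \ref{p:upwards}, which holds for every $\cs$-algebra. If this route turns out to be insufficient in a subtle way, the hypothesis could instead be used in a constructive alternative: for each closed ideal $J \subseteq \Ped(A)$ with $J \subseteq \Ped(A)$, pull $f$ back along the unique bounded extension $T^b_+(J) \to T^b_+(A) \subseteq T_+(A)$ (using the central projection $p_J \in A^{**}$ of $J$) to obtain $g_J \in \Aff_0 T^b_+(J)$, apply Proposition \ref{p:Affbounded} to $J$ to get a representative $a_J \in J_{\mathrm{sa}}$, and then use the directedness of the closed ideals guaranteed by the hypothesis to glue the $a_J$ into a single element of $\Ped(A)_{\mathrm{sa}}$. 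The crux of this alternative is managing compatibility of the $a_J$ modulo each trace-null subspace, and the hypothesis is precisely what ensures the gluing stays inside $\Ped(A)$.
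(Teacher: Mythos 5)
There is a genuine gap, and it sits exactly where you suspected. The step ``Pointwise boundedness of $\tau_i(a)$ for each $a \in \Ped(A)$ together with Lemma \ref{l:compact} makes $\{\tau_i\}$ relatively compact'' is false: a convergent (ultra)net in a topological vector space need not have pointwise bounded, let alone relatively compact, range. (Even in $\mathbb R$: index by the finite subsets $F$ of $\mathbb N$ ordered by inclusion and set $x_F = |F|$ if $1 \notin F$ and $x_F = 0$ otherwise; this net converges to $0$ with unbounded range, and the same phenomenon survives passage to a universal subnet.) Convergence only gives that for each fixed $a$ the net $\tau_i(a)$ is \emph{eventually} bounded, with the ``eventually'' depending on $a$; since infinitely many $a$'s are involved, no single tail of the net is pointwise bounded, so Lemma \ref{l:compact} cannot be invoked. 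Once relative compactness of $\{\tau_i^{\pm}\}$ fails, the ultranets $(\tau_i^{\pm})_i$ need not converge and the rest of the argument collapses. The decisive sanity check is the one you half-noticed yourself: your argument never uses the hypothesis on ideals, so if it were correct it would answer the Trace Question affirmatively for \emph{every} $\cs$-algebra, which the paper explicitly states is open.

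The paper's actual proof uses the hypothesis at precisely the point where your argument has nothing to hold on to. Continuity of $f$ at $0 \in T_+(A)$ yields finitely many $a_1,\dots,a_n \in \Ped(A)$ with $\{\tau \in T_+(A) : \max_i |\tau(a_i)| < 1\} \subseteq \tilde f^{-1}(\mathcal B_1(0))$; by hypothesis the ideal $J$ they generate lies in $\Ped(A)$, so every trace is bounded on $J$ and a homogeneity argument gives $|\tilde f(\tau)| \leq \|\tau|_J\| \cdot \max_i \|a_i\|$ on $T_+(A)$, hence (via the Jordan decomposition and Lemma \ref{l:Jordan2}) on all of $T_{\mathbb R}(A)$. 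Thus $\tilde f$ factors through the restriction $T_{\mathbb R}(A) \to T^{\mathrm b}_{\mathbb R}(J)$, where the target is the dual of a Banach space and continuity can be checked on the norm-bounded set $T_{\leq 1}(J)$ via Proposition \ref{p:Affbounded} and Krein--Smulian. Your sketched ``constructive alternative'' points in roughly this direction, but the gluing over all ideals it proposes is both unexecuted and unnecessary: the content of the hypothesis is that a \emph{single} ideal $J \subseteq \Ped(A)$ suffices to control $\tilde f$ everywhere. As written, the proposal does not establish the proposition.
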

\begin{proof}
    By Proposition \ref{p:TAffinj} it suffices to show the map is surjective. Let $f\in \Aff_0 T_+(A)$ and extend it to a linear functional $\tilde f \colon T_{\mathbb R}(A) \to \mathbb R$ which is weak$^\ast$-continuous on $T_+(A)$. Hence we may pick $a_1,\dots, a_n \in \Ped(A)$ such that 
   \begin{equation}
       \{ \tau \in T_+(A) : \max_{1 \leq i \leq n} |\tau(a_i)| < 1 \} \subseteq \tilde f^{-1}(\mathcal B_1(0))
   \end{equation}
   where $\mathcal B_1(0)$ is the open unit ball in $\mathbb C$. Let $J$ be the two-sided closed ideal generated by $a_1, \dots, a_n$. By assumption, $J\subseteq \Ped(A)$, and thus $\tau|_J$ is bounded for every $\tau\in T_{\mathbb R}(A)$. Consequently, we get that 
   \begin{equation}\label{eq:fboundedT_+}
       |\tilde f(\tau)| \leq \max_{1\leq i \leq n} |\tau(a_i)| \leq \| \tau|_J\| \max_{1\leq i \leq n} \| a_i\| = \| \tau|_J\| M
   \end{equation}
   for all $\tau\in T_+(A)$, where  $M := \max_{1\leq i \leq n} \| a_i\|$.

   For any $\tau\in T_{\mathbb R}(A)$ with Jordan decomposition $(\tau^+, \tau^-)$, we have by Proposition \ref{p:JordanPed} that $(\tau^+|_J, \tau^-|_J)$ is the Jordan decomposition of $\tau|_J$ and thus
   \begin{equation}\label{eq:fbounded}
       |\tilde f(\tau)| \leq |\tilde f(\tau^+)| + |\tilde f(\tau^-)| \stackrel{\eqref{eq:fboundedT_+}}{\leq} (\| \tau^+|_J\| + \| \tau^-|_J\|) M = \| \tau|_J\| M.
   \end{equation}
   Note that every bounded trace $\rho$ on $J$ extends canonically to a trace $\tilde \rho \in T_{\mathbb R}(A)$ with $\| \tilde \rho \| = \| \rho\|$ by composing the trace $\rho^{\ast \ast}$ on $J^{\ast \ast}$ with the canonical $\ast$-homomorphism $A \to J^{\ast \ast}$.  
    In particular, $\tilde f$ induces a linear functional $\overline f \colon T_{\mathbb R}^{\mathrm{b}} (J) \to \mathbb R$ by
    \begin{equation}\label{restriction to J equation}
        \overline f(\tau|_J) = \tilde f(\tau) , \qquad \tau \in T_{\mathbb R}(A).
    \end{equation}
    This is well-defined by \eqref{eq:fbounded}. To check that this linear functional is continuous, it suffices by Proposition \ref{p:Affbounded} to show that it is continuous on $T_{\leq 1}(J)$. Let $(\rho_i)_i$ be an ultranet in $T_{\leq 1}(J)$ with limit $\rho\in T_{\leq 1}(J)$. Let $\tilde \rho_i \in T_{\leq 1}(A)$ be the canonical extensions of $\rho_i$. Since $T_{\leq 1}(A)$ is weak$^\ast$-compact, let $\tau \in T_{\leq 1}(A)$ be the limit of the ultranet $(\tilde \rho_i)_i$. Note that $\tau$ might not be the canonical extension of $\rho$, but that it is an extension of $\rho$ nonetheless. As $\tilde f$ is continuous on $T_+(A)$ we get
    \begin{equation}
        \lim_i \overline f(\rho_i)  = \lim_i \tilde f(\tilde \rho_i) = \tilde f(\tau) = \overline f(\rho).
    \end{equation}
    Hence $\overline f$ is continuous. By continuity of the restriction to $J$ and \eqref{restriction to J equation}, $\tilde f$ is continuous.
\end{proof}

In summary we get the following. 

\begin{corollary}\label{c:traceproblem}
The class of $\cs$-algebras $A$ for which the Trace Question has a positive answer, or equivalently the canonical map 
 \begin{equation*}
        T_{\mathbb R}(A)^\ast \to \Aff_0 T_+(A)
 \end{equation*}
is an isomorphism of ordered topological vector spaces, is closed under Morita equivalence, and contains every $\cs$-algebra $A$ satisfying one of the following:
    \begin{itemize}
        \item[(a)] all the tracial weights on $A$ are bounded and $T_1(A)$ is compact; or
        \item[(b)] the primitive ideal space of $A$ is compact (e.g.~if $A$ is simple or contains a full projection); or more generally
        \item[(c)] the cone $T_+(A)$ of positive traces has a compact base (equivalently, it is locally compact); or
        \item[(d)] $A$ is commutative.
    \end{itemize}
\end{corollary}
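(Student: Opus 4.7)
\begin{skproof}
I will assemble the result from the propositions of this section together with Corollary~\ref{c:Tfullher}.

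Morita stability follows directly from Corollary~\ref{c:Tfullher}: a full hereditary inclusion $A\subseteq B$ induces an isomorphism $T_{\mathbb R}(B)\cong T_{\mathbb R}(A)$ of ordered topological vector spaces, which restricts to $T_+(B)\cong T_+(A)$ and, by duality (Theorem~\ref{t:duality}) and naturality, intertwines the canonical maps $T_{\mathbb R}(A)^\ast\to\Aff_0 T_+(A)$ and $T_{\mathbb R}(B)^\ast\to\Aff_0 T_+(B)$. Thus the property of the canonical map being an isomorphism transfers between $A$ and $B$.

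Part (c) is exactly Proposition~\ref{p:compact base}. My plan for (a) is to reduce to (c): under the hypotheses of (a), the set $T_1(A)$ is a base for the cone $T_+(A)$, and it is compact in the weak$^\ast$-topology coming from $A^\ast$ by assumption. Since $\Ped(A)$ is norm-dense in $A$, the weak$^\ast$-topologies from $A^\ast$ and from $\Ped(A)^\ast$ agree on any norm-bounded subset of bounded traces; in particular they agree on $T_1(A)$, which is thus a compact base for $T_+(A)$ in the topology used throughout the paper, so (c) applies.

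For (b) I invoke the fact recorded in the paragraph preceding the corollary: $\Prim(A)$ is compact precisely when $A$ contains a full hereditary $\cs$-subalgebra $B$ with $\Ped(B)=B$, and simple $\cs$-algebras as well as $\cs$-algebras containing a full projection are well-known special cases. By Morita stability it suffices to treat such a $B$. When $\Ped(B)=B$, the inductive limit topology on $\Ped(B)$ coincides with the norm topology, so every $\tau\in T_{\mathbb R}(B)$ is norm-bounded; we obtain $T_{\mathbb R}(B)=T_{\mathbb R}^{\mathrm{b}}(B)$ and $T_+(B)=T_+^{\mathrm{b}}(B)$ with matching topologies and positive cones, and the first assertion of Proposition~\ref{p:Affbounded} provides the isomorphism.

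Finally, for (d) I will verify the hypothesis of Proposition~\ref{p:TAff}. Write $A=C_0(X)$; then $\Ped(A)=C_c(X)$, and for $a_1,\dots,a_n\in C_c(X)$ the closed two-sided ideal they generate is $C_0(U)$, where $U=\bigcup_i\{x:a_i(x)\neq 0\}$ has closure contained in the compact set $\bigcup_i\mathrm{supp}(a_i)$. Extension by zero therefore identifies $C_0(U)$ with a subspace of $C_c(X)=\Ped(A)$, and Proposition~\ref{p:TAff} gives the desired isomorphism. The only genuinely delicate point in the whole argument is the topology comparison inside (a); once that is in hand, the rest is a matter of packaging the previous results together with Morita stability.
\end{skproof}
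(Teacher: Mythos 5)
Your argument is correct and follows essentially the same route as the paper, which assembles the corollary from Corollary~\ref{c:Tfullher} (Morita stability), Proposition~\ref{p:Affbounded} together with the $\Ped(B)=B$ characterisation of compact primitive ideal space for (b), Proposition~\ref{p:compact base} for (c) and hence (a) via the compact base $T_1(A)$, and Proposition~\ref{p:TAff} for (d). Your explicit checks -- that the two weak$^\ast$-topologies agree on the norm-bounded set $T_1(A)$, and that the ideal generated by finitely many elements of $C_c(X)$ is $C_0(U)$ with $\overline U$ compact -- are exactly the points the paper leaves implicit, and they are verified correctly.
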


\section{Traces on groupoid $\cs$-algebras}\label{s:groupoid}

An étale groupoid $\mathcal G$ is a topological groupoid such that the range map $r \colon \mathcal G \to \mathcal G^0$ (and thus also the source map) is a local homeomorphism, and we assume that the unit space $\mathcal G^0$ is locally compact and Hausdorff. By a \textit{twist} over $\mathcal G$ we mean a Fell line bundle $\mathcal L \to \mathcal G$, and we call the pair $(\mathcal G, \mathcal L)$ a \textit{twisted étale groupoid}. 
For an open bisection $U \subseteq \mathcal G$, we write $\Gamma_c(U, \mathcal L)$ for the space of compactly supported continuous sections $U \to \mathcal L$. We include $\Gamma_c(U, \mathcal L)$ into the space of bounded sections $\mathcal G \to \mathcal L$ by setting the value to be $0$ outside $U$, and set $\Gamma_c(\mathcal G, \mathcal L)$ to be the span of all these sections over open bisections $U$. When $\mathcal G$ is Hausdorff, this is the space of compactly supported sections, but for non-Hausdorff groupoids we warn the reader that these sections are not necessarily continuous. We identify the space of sections $\Gamma_c(\mathcal G^0, \mathcal L)$ supported on the unit space with $C_c(\mathcal G^0)$ as the restriction of $\mathcal L$ to $\mathcal G^0$ is the trivial bundle whose fibres have the structure of the $\cs$-algebra $\mathbb C$. We refer to \cite{ExelPitts} for an introduction to twisted étale groupoids from the line bundle perspective and their $\cs$-algebras. 

Convolution defines a $*$-algebra structure on $\Gamma_c(\mathcal G, \mathcal L)$. Given a faithful $*$-repre\-sent\-ation $\pi \colon \Gamma_c(\mathcal G, \mathcal L) \to \mathcal B(\mathcal H)$ we write $\|\cdot\|_\pi$ for the induced $\cs$-norm on $\Gamma_c(\mathcal G, \mathcal L)$ and we denote the completion by $\cs_\pi(\mathcal G, \mathcal L)$. For the trivial Fell line bundle $\mathcal L = \mathbb C \times \mathcal G \to \mathcal G$ we recover the usual untwisted $*$-algebras and write $C_c(\mathcal G)$ and $\cs_\pi(\mathcal G)$. The reduced $\cs$-algebra $\cs_\lambda(\mathcal G, \mathcal L)$ is the $\cs$-algebra defined by the left regular representation $\lambda \colon \Gamma_c(\mathcal G, \mathcal L) \to \mathcal B(\bigoplus_{x \in G^0} \ell^2(\mathcal G_x, \mathcal L))$.

\begin{lemma}
   Let $(\mathcal G, \mathcal L)$ be a twisted étale groupoid and $\pi \colon \Gamma_c(\mathcal G, \mathcal L) \to \mathcal B(\mathcal H)$ a faithful $\ast$-representation. Then $\Gamma_c(\mathcal G, \mathcal L) \subseteq \Ped(\cs_\pi(\mathcal G, \mathcal L))$.
\end{lemma}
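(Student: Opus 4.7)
The plan is to reduce to sections supported on a single open bisection and then to factor such a section through a compactly supported function on the unit space, exploiting that $\Ped(\cs_\pi(\mathcal G, \mathcal L))$ is a two-sided ideal.

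First, I would observe that $\Gamma_c(\mathcal G, \mathcal L)$ is by definition the linear span of the spaces $\Gamma_c(U, \mathcal L)$ as $U$ ranges over the open bisections of $\mathcal G$. Since $\Ped(\cs_\pi(\mathcal G, \mathcal L))$ is a linear subspace, it suffices to show that each $\Gamma_c(U, \mathcal L)$ lies inside it, for $U$ an open bisection. Throughout I would write $A = \cs_\pi(\mathcal G, \mathcal L)$ for brevity.

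Second, I would verify that $C_c(\mathcal G^0) \subseteq \Ped(A)$. The inclusion $\Gamma_c(\mathcal G^0, \mathcal L) = C_c(\mathcal G^0) \hookrightarrow \Gamma_c(\mathcal G, \mathcal L)$ is a $*$-homomorphism, and since $C_c(\mathcal G^0)$ is commutative there is a unique $\cs$-norm on it, so faithfulness of $\pi$ forces $\| \cdot \|_\pi$ to restrict to the supremum norm; consequently the closure of $C_c(\mathcal G^0)$ inside $A$ is canonically identified with $C_0(\mathcal G^0)$. Now for any $h \in C_c(\mathcal G^0)_+$ one can pick $e \in C_c(\mathcal G^0)_+$ which equals $1$ on the support of $h$, giving $eh = h$; hence $h \in \Ped(A)_0$, and by linearity $C_c(\mathcal G^0) \subseteq \Ped(A)$.

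Third, I would factor an arbitrary $f \in \Gamma_c(U, \mathcal L)$ through $C_c(\mathcal G^0)$. Let $K = \mathrm{supp}(f) \subseteq U$, which is compact, and let $K' = r(K) \subseteq \mathcal G^0$, also compact (note that even in the non-Hausdorff case, an open bisection is homeomorphic via $r$ to an open subset of the Hausdorff space $\mathcal G^0$, so these notions are unambiguous). Choose $h \in C_c(\mathcal G^0)$ with $0 \leq h \leq 1$ and $h \equiv 1$ on $K'$. Working out the convolution on the bisection $U$ gives, for $\gamma \in U$, the identity $(h * f)(\gamma) = h(r(\gamma)) f(\gamma)$ in the fibre $\mathcal L_\gamma$, while $h * f$ vanishes outside $U$; thus $h * f = f$. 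Since $h \in \Ped(A)$ by the previous paragraph and $\Ped(A)$ is a two-sided ideal of $A$, we conclude $f = h * f \in \Ped(A)$, as required.

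The only place requiring mild care is identifying $C_c(\mathcal G^0)$ as a genuine subalgebra of $A$ on which the norm is the sup norm; the remainder is a straightforward combination of the defining property of $\Ped(A)_0$ with the ideal property of $\Ped(A)$.
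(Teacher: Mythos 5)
Your proof is correct and takes essentially the same approach as the paper's: both factor $f$ as a product of an element of $C_c(\mathcal G^0)$ (which lies in $\Ped(\cs_\pi(\mathcal G, \mathcal L))_0$ by the defining property) with $f$ itself, and then invoke the ideal property of the Pedersen ideal. The only cosmetic differences are that you first reduce to sections supported on a single bisection and multiply on the left via the range map, whereas the paper handles a general $f\in\Gamma_c(\mathcal G,\mathcal L)$ at once and multiplies on the right via the source map.
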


\begin{proof}
    Let $f\in \Gamma_c(\mathcal G, \mathcal L)$. Then there is a compact set $Y \subseteq \mathcal G^0$ such that $f(g) = 0$ for any $g \in \mathcal G$ with $s(g) \in \mathcal G^0 \setminus Y$, and we may find $g\in C_c(\mathcal G^0)$ such that $g(y) = 1$ for all $y\in Y$ (see for instance \cite[Proposition 1.7.5]{Pedersen-book-analysisnow}). Hence $g\in C_c(\mathcal G^0) \subseteq \Ped(\cs_\pi(\mathcal G, \mathcal L))$ and $f\ast g = f$. As $\Ped(\cs_\pi(\mathcal G, \mathcal L))$ is an ideal it follows that $f\in \Ped(\cs_\pi(\mathcal G, \mathcal L))$.
\end{proof}

If $(\mathcal G, \mathcal L)$ is a twisted étale groupoid and $Y \subseteq \mathcal G^0$ is open, we let $\Gamma_c(\mathcal G, \mathcal L)|_Y$ denote the $\ast$-subalgebra of all $f \in \Gamma_c(\mathcal G, \mathcal L)$ for which $f = 0$ outside of $\mathcal G^Y_Y := \{g \in \mathcal G : r(g),s(g) \in Y \}$. The following describes $T_{\mathbb R}(\cs _\pi(\mathcal G, \mathcal L))$ as an ordered vector space in terms of traces on $\Gamma_c(\mathcal G, \mathcal L)$. We note that one could also describe the topology using Proposition \ref{p:extendtraces}, but that the topology is not obviously natural in this case.

\begin{proposition}\label{traces on groupoid algebras}
Let $(\mathcal G, \mathcal L)$ be a twisted étale groupoid and $\pi \colon \Gamma_c(\mathcal G, \mathcal L) \to \mathcal B(\mathcal H)$ a faithful $\ast$-representation. Restriction defines a one-to-one correspondence between self-adjoint/positive continuous traces on $\Ped(\cs_\pi(\mathcal G, \mathcal L))$ and self-adjoint/positive traces $\tau$ on $\Gamma_c(\mathcal G, \mathcal L)$ which are $\| \cdot\|_\pi$-bounded on $\Gamma_c(\mathcal G, \mathcal L)|_Y$ for each relatively compact open $Y \subseteq \mathcal G^0$. 
\end{proposition}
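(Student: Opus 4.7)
My plan is to apply Proposition \ref{p:extendtraces} with $A = \cs_\pi(\mathcal G, \mathcal L)$, $A_0 = \Gamma_c(\mathcal G, \mathcal L)$ (contained in $\Ped(A)$ by the preceding lemma), and $X = C_c(\mathcal G^0) \subseteq A_0$. The bulk of the work is verifying the three hypotheses of that proposition; once done, the correspondence in the statement will follow immediately after matching the two boundedness conditions.

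First I would verify $XA_0 = A_0$: any $g \in \Gamma_c(\mathcal G, \mathcal L)$ has $r(\mathrm{supp}(g))$ contained in a compact subset of $\mathcal G^0$, so Urysohn's lemma for locally compact Hausdorff spaces yields $f \in C_c(\mathcal G^0)$ equal to $1$ on a neighbourhood of $r(\mathrm{supp}(g))$. A direct convolution computation (using that $(f \ast g)(x) = f(r(x)) g(x)$ when $f$ is supported on the unit space, where the line bundle is trivial) gives $f \ast g = g$. This same fact immediately implies $X$ generates $A$ as a two-sided closed ideal, since $A_0 \subseteq XA_0 \subseteq AXA$ and $A_0$ is dense in $A$. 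The third hypothesis, density of $xA_0y^\ast$ in $\overline{xAy^\ast}$ for $x,y \in X$, follows from the density of $A_0$ in $A$ together with continuity of multiplication on $A$.

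It remains to match the resulting boundedness condition (that $\tau_0|_{fA_0 f^\ast}$ be $\|\cdot\|_\pi$-bounded for every $f\in X$) with the one in the proposition. For one direction, any $f \in C_c(\mathcal G^0)$ has support in some relatively compact open $Y \subseteq \mathcal G^0$, and then $fA_0f^\ast \subseteq \Gamma_c(\mathcal G, \mathcal L)|_Y$. For the converse, given a relatively compact open $Y$, Urysohn furnishes a real-valued $f \in C_c(\mathcal G^0)$ with $f \equiv 1$ on $\overline Y$; then the computation $(f \ast g \ast f)(x) = f(r(x))\, g(x)\, f(s(x)) = g(x)$ for $g \in \Gamma_c(\mathcal G, \mathcal L)|_Y$ shows $\Gamma_c(\mathcal G, \mathcal L)|_Y \subseteq fA_0 f^\ast$, transferring the bound. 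I do not foresee a serious obstacle: once the natural choice $X = C_c(\mathcal G^0)$ is made, every verification reduces to standard cutoff-function arguments, and the twist plays no role because the cutoff functions live on the unit space where $\mathcal L$ is trivial.
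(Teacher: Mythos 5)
Your proposal is correct and follows essentially the same route as the paper: both apply Proposition \ref{p:extendtraces} with $A_0 = \Gamma_c(\mathcal G, \mathcal L)$ and $X = C_c(\mathcal G^0)$ and then match the two boundedness conditions via the same cutoff-function containments $f\Gamma_c(\mathcal G, \mathcal L)f^\ast \subseteq \Gamma_c(\mathcal G, \mathcal L)|_Y$ and $\Gamma_c(\mathcal G, \mathcal L)|_Y \subseteq f\Gamma_c(\mathcal G, \mathcal L)f^\ast$. The only difference is that you spell out the verification of the hypotheses of Proposition \ref{p:extendtraces}, which the paper treats as routine.
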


\begin{proof}
In Proposition \ref{p:extendtraces}, take $A = \cs_\pi(\mathcal G, \mathcal L)$, $A_0 = \Gamma_c(\mathcal G, \mathcal L)$ and $X = C_c(\mathcal G^0)$. It then suffices to show that a tracial functional $\tau \colon \Gamma_c(\mathcal G, \mathcal L) \to \mathbb C$ is $\| \cdot \|_\pi$-bounded on $\Gamma_c(\mathcal G, \mathcal L)|_Y$ for all relatively compact open $Y\subseteq \mathcal G^0$ if and only if $\tau$ is $\| \cdot \|_\pi$-bounded on $f \Gamma_c(\mathcal G, \mathcal L) f^\ast$ for all $f\in C_c(\mathcal G^0)$. If $Y\subseteq \mathcal G^0$ is relatively compact and open and $f \in C_c(\mathcal G^0)$ is a function which is $1$ on $Y$, then $\Gamma_c(\mathcal G, \mathcal L)|_Y \subseteq f\Gamma_c(\mathcal G, \mathcal L)f^\ast$ and thus the ``if''-part follows. Similarly, if $f\in C_c(\mathcal G^0)$ then $\mathrm{supp}(f)$ is contained in a relatively compact open $Y \subseteq \mathcal G^0$ and $f\Gamma_c(\mathcal G, \mathcal L)f^\ast \subseteq \Gamma_c(\mathcal G, \mathcal L)|_Y$, hence the ``only if''-part follows.
\end{proof}

Traces on groupoid $\cs$-algebras arise naturally from invariant real Radon measures on the unit space. Recall that we let $\mathrm{Rad}_{\mathbb R}(X)$ denote the ordered topological vector space of real Radon measures on the locally compact space $X$.  

\begin{definition}
Let $\mathcal G$ be an étale groupoid. A real Radon measure $\mu$ on $\mathcal G^0$ is \textit{$\mathcal G$-invariant} if for every open bisection $U \subseteq \mathcal G$ and every $f \in C_c(r(U))$, we have
\[ \int_{r(U)} f \, \mathrm d \mu =   \int_{s(U)} f \circ \alpha_U \, \mathrm d \mu, \]
where $\alpha_U \colon s(U) \to r(U)$ is the homeomorphism sending $s(g)$ to $r(g)$ for each $g \in U$. We note that this formula clearly extends to every $f \in L^1(r(U),\mu)$.

We let $\mathrm{Rad}_{\mathbb R}^\mathcal{G}(\mathcal G^0)$ denote the ordered topological vector space of real $\mathcal G$-invariant Radon measures with the weak$^\ast$-topology coming from $C_c(\mathcal G^0)$. 
\end{definition}

Recall that $C_0(\mathcal G^0)$ embeds canonically as a $\cs$-subalgebra of $\cs_\pi(\mathcal G, \mathcal L)$ and hence there is a canonical map $T_{\mathbb R}(\cs_\pi(\mathcal G, \mathcal L)) \to T_{\mathbb R}(C_0(\mathcal G^0)) \cong \mathrm{Rad}_{\mathbb R}(\mathcal G^0)$ given by restriction.

\begin{lemma}\label{l:res}
     For any twisted étale groupoid $(\mathcal G,\mathcal L)$ with faithful $\ast$-re\-pre\-sen\-tation $\pi \colon \Gamma_c(\mathcal G, \mathcal L) \to \mathcal B(\mathcal H)$ and any trace $\tau \in T_{\mathbb R}(\cs_\pi(\mathcal G, \mathcal L))$, the Radon measure on $\mathcal G^0$ obtained by restriction of $\tau$ to $C_c(\mathcal G^0)$ is $\mathcal G$-invariant.
\end{lemma}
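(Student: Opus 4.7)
The plan is to reduce the invariance identity to the tracial property $\tau(xy) = \tau(yx)$ applied to products supported on bisections. Fix an open bisection $U \subseteq \mathcal G$ and a function $f \in C_c(r(U))$; by splitting $f$ into real and imaginary parts and then into positive and negative parts, it suffices to verify the identity when $f \geq 0$.

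The first step is to produce a finite decomposition $f = \sum_{i=1}^n \xi_i \xi_i^*$ in the $\ast$-algebra $\Gamma_c(\mathcal G, \mathcal L)$ with each $\xi_i \in \Gamma_c(U, \mathcal L)$. To do this I will cover the compact set $(r|_U)^{-1}(\mathrm{supp}(f))$ by finitely many open sets $W_i \subseteq U$ on which the Fell line bundle $\mathcal L$ is trivialised by continuous unit-norm sections $e_i \in \Gamma(W_i, \mathcal L)$, and choose a continuous partition of unity $(\phi_i)$ with $\phi_i \in C_c(r(W_i))$ and $\sum_i \phi_i = 1$ on $\mathrm{supp}(f)$. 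Setting
\[
\xi_i = \bigl(\sqrt{\phi_i f} \circ r\bigr) \cdot e_i
\]
(extended by zero outside $W_i$) produces the required $\xi_i \in \Gamma_c(U, \mathcal L)$.

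The second step is an explicit computation of the products on a bisection. For any $\xi \in \Gamma_c(U, \mathcal L)$, the convolution formula together with the fact that $U$ is a bisection gives that $\xi\xi^* \in \Gamma_c(\mathcal G, \mathcal L)$ is supported in $UU^{-1} \subseteq \mathcal G^0$ and satisfies $(\xi\xi^*)(r(g)) = |\xi(g)|^2$ for $g \in U$; symmetrically $\xi^*\xi \in C_c(s(U))$ with $(\xi^*\xi)(s(g)) = |\xi(g)|^2$. Hence $\xi^*\xi = (\xi\xi^*) \circ \alpha_U$. Summing the construction of the previous step yields $\sum_i \xi_i \xi_i^* = f$ in $C_c(r(U))$ and $\sum_i \xi_i^* \xi_i = f \circ \alpha_U$ in $C_c(s(U))$.

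For the third step, each $\xi_i$ lies in $\Gamma_c(\mathcal G, \mathcal L) \subseteq \Ped(\cs_\pi(\mathcal G, \mathcal L))$ by the preceding lemma, so the tracial property gives $\tau(\xi_i \xi_i^*) = \tau(\xi_i^* \xi_i)$ for each $i$. Summing and using that $\tau$ restricts to $\int (\cdot)\, \mathrm d\mu$ on $C_c(\mathcal G^0)$ yields $\int_{r(U)} f\, \mathrm d\mu = \int_{s(U)} f \circ \alpha_U\, \mathrm d\mu$, as required. The main technical obstacle is the first step, where one must decompose $f$ as a sum of $|\xi_i|^2$ valued in the line bundle; this hinges on local triviality of $\mathcal L$ along $U$ and a partition-of-unity argument, but does not encounter any issue from non-Hausdorffness of $\mathcal G$ since $U$ itself is Hausdorff and the constructed $\xi_i$ are genuinely continuous on $U$.
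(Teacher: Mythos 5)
Your proof is correct, but it takes a genuinely different route from the paper's. The paper views $\Gamma_0(U,\mathcal L)$ as a full left Hilbert $C_0(r(U))$-module to produce, for each $\epsilon>0$, an \emph{approximate} factorisation $\|f-\sum_i k_i\ast h_i^\ast\|<\epsilon$ with $k_i,h_i\in\Gamma_c(U,\mathcal L)$; the same pointwise bisection computation you perform then gives $\|f\circ\alpha_U-\sum_i h_i^\ast\ast k_i\|<\epsilon$, and one concludes by first reducing to relatively compact $U$ so that $\tau$ is norm-bounded on $C_c(r(U)\cup s(U))$ and letting $\epsilon\to 0$. You instead build an \emph{exact} factorisation $f=\sum_i\xi_i\ast\xi_i^\ast$ and $f\circ\alpha_U=\sum_i\xi_i^\ast\ast\xi_i$ by hand, using local unit-norm sections of $\mathcal L$ over $U$ and a partition of unity, so that the invariance drops out of $\tau(\xi_i\ast\xi_i^\ast)=\tau(\xi_i^\ast\ast\xi_i)$ with no approximation, no appeal to boundedness of $\tau$ on a $\cs$-subalgebra, and no reduction to relatively compact $U$. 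The trade-off is that your construction requires $\mathcal L$ to admit local non-vanishing (hence, after normalising, unit-norm) continuous sections over $U$ --- this is automatic for twists in the Exel--Pitts line-bundle framework used here, but it is an input the Hilbert-module fullness argument avoids. Your pointwise identities $(\xi\ast\xi^\ast)(r(g))=\xi(g)\cdot\xi(g)^\ast=\xi(g)^\ast\cdot\xi(g)=(\xi^\ast\ast\xi)(s(g))$ use the same observation as the paper's footnote that $a\cdot b=b\cdot a$ for $a\in\mathcal L_g$, $b\in\mathcal L_{g^{-1}}$, and your remark that non-Hausdorffness causes no trouble because everything is supported on the Hausdorff bisections $W_i\subseteq U$ and on $r(W_i)\subseteq\mathcal G^0$ is exactly right.
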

\begin{proof}
    Let $U \subseteq \mathcal G$ be an open bisection and let $f \in C_c(r(U))$. 
    To show that $\tau(f) = \tau(f \circ \alpha_U)$, we may without loss of generality assume that $U$ is relatively compact so that $\tau$ is bounded on $C_c(r(U) \cup s(U))$. 
    Viewing $\Gamma_0(U, \mathcal L)$ as a full left Hilbert $C_0(r(U))$-module, we may for any $\epsilon > 0$ find finitely many $k_i,h_i \in \Gamma_c(U, \mathcal L)$ such that $\| f - \sum_i k_i \ast h_i^\ast \| < \epsilon$. 
    For $g \in U$ we compute\footnote{For $a \in \mathcal L_g$ and $b \in \mathcal L_{g^{-1}}$ the elements $a \cdot b$ and $b \cdot a$ represent the same complex number because if $b = \lambda a^\ast$ then both are $\lambda \|a\|^2$.}
    \[ \sum_i ( h_i^\ast \ast k_i ) (s(g)) = \sum_i h_i^\ast(g^{-1}) \cdot k_i(g) = \sum_i k_i(g) \cdot h_i^\ast(g^{-1}) = \sum_i ( k_i \ast h_i^\ast )(r(g)) \]
    and therefore $\| f \circ \alpha_U - \sum_i   h_i^\ast \ast k_i  \| < \epsilon$. Since $\epsilon > 0$ was arbitrary and $\tau$ is bounded on $C_c(r(U) \cup s(U))$, we conclude that $\tau(f) = \tau(f \circ \alpha_U)$.    
\end{proof}

For Hausdorff groupoids we have a canonical conditional expectation $E \colon \cs_\pi(\mathcal G, \mathcal L) \to C_0(\mathcal G^0)$ extending the map $\Gamma_c(\mathcal G, \mathcal L) \to C_c(\mathcal G^0)$ given by $f \mapsto f|_{\mathcal G^0}$. This is often used to induce bounded traces on $\cs_\pi(\mathcal G, \mathcal L)$ coming from bounded $\mathcal G$-invariant measures on $\mathcal G^0$. However, $E$ does not map $\Ped(\cs_\pi(\mathcal G, \mathcal L))$ into $C_c(\mathcal G^0) = \Ped(C_0(\mathcal G^0))$ in general, so this approach cannot a priori be generalised to unbounded traces. 

For instance, if $(C_0(\mathcal G^0), \cs_\pi(\mathcal G, \mathcal L)) = (C_0(\mathbb N), \mathcal K(\ell^2(\mathbb N)))$ with standard matrix units $e_{j,k}\in \mathcal K(\ell^2(\mathbb N))$, then the rank one projection $p = \sum_{j,k=1}^\infty \tfrac{\sqrt{6}}{jk \pi} e_{j,k}$ is in $\Ped(\mathcal K(\ell^2(\mathbb N)))$ (since it is finite rank), but $E(p) =  \sum_{j=1}^\infty  \tfrac{\sqrt{6}}{j^2 \pi} e_{j,j} \in C_0(\mathbb N)$ which does not have compact support. 

However, $\mathcal G$-invariant Radon measures do nonetheless give rise to traces on $\cs_\pi(\mathcal G, \mathcal L)$ even when $\mathcal G$ is not Hausdorff. 

\begin{corollary}\label{c:groupoidtrace}
Let $(\mathcal G, \mathcal L)$ be a twisted étale groupoid and let $\pi \colon \Gamma_c(\mathcal G, \mathcal L) \to \mathcal B(\mathcal H)$ be a $*$-representation which weakly contains the regular representation $\lambda$. Every $\mathcal G$-invariant real (resp.~positive) Radon measure $\mu$ on $\mathcal G^0$ induces a self-adjoint (resp.~positive) continuous trace $\tau_\mu$ on $\Ped(\cs_\pi(\mathcal G, \mathcal L))$ which is uniquely determined by
\[
\tau_\mu(f) = \int_{\mathcal G^0} f|_{\mathcal G^0} \, \mathrm d \mu, \qquad \textrm{for }f\in \Gamma_c(\mathcal G, \mathcal L).
\]
Moreover, if $\mathcal G$ is principal then every self-adjoint (resp.~positive) continuous trace on $\Ped(\cs_\pi(\mathcal G, \mathcal L))$ is of this form.
\end{corollary}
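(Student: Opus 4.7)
Given a $\mathcal G$-invariant real Radon measure $\mu$ on $\mathcal G^0$, I would first define $\tau_\mu \colon \Gamma_c(\mathcal G, \mathcal L) \to \mathbb C$ by the stated formula and verify four properties in order to extend it via Proposition \ref{traces on groupoid algebras}: traciality; self-adjointness; positivity when $\mu$ is positive; and $\|\cdot\|_\pi$-boundedness on $\Gamma_c(\mathcal G, \mathcal L)|_Y$ for each relatively compact open $Y \subseteq \mathcal G^0$. Self-adjointness is immediate because $(f^\ast)|_{\mathcal G^0} = \overline{f|_{\mathcal G^0}}$ and $\mu$ is real, and positivity follows from the fact that $(f^\ast \ast f)|_{\mathcal G^0}$ is pointwise a sum of non-negative real numbers. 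For the boundedness estimate, evaluating $\langle \lambda(f) \delta_x, \delta_x \rangle = f(x)$ at a unit $x$ yields $\|f|_{\mathcal G^0}\|_\infty \leq \|f\|_\lambda \leq \|f\|_\pi$ (since $\pi$ weakly contains $\lambda$), whence $|\tau_\mu(f)| \leq |\mu|(\overline Y)\,\|f\|_\pi$ for $f \in \Gamma_c(\mathcal G, \mathcal L)|_Y$, with $|\mu|(\overline Y) < \infty$ by compactness. Uniqueness of the extension is part of Proposition \ref{traces on groupoid algebras}.

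For traciality, by bilinearity and partition of unity I reduce to $f \in \Gamma_c(U, \mathcal L)$ and $g \in \Gamma_c(V, \mathcal L)$ for open bisections $U, V$. Set $W = U \cap V^{-1}$, which is again an open bisection, with bisection homeomorphism $\alpha_W \colon s(W) \to r(W)$. A direct computation (invoking the identity $a \cdot b = b \cdot a$ in $\mathbb C$ for $a \in \mathcal L_h$ and $b \in \mathcal L_{h^{-1}}$, as used in Lemma \ref{l:res}) shows that $(f \ast g)|_{\mathcal G^0}$ is supported on $r(W)$, that $(g \ast f)|_{\mathcal G^0}$ is supported on $s(W)$, and that the two restrictions are related by pullback along $\alpha_W$. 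The $\mathcal G$-invariance of $\mu$ applied to $W$ then yields $\tau_\mu(f \ast g) = \tau_\mu(g \ast f)$.

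For the principal case, let $\tau \in T_{\mathbb R}(\cs_\pi(\mathcal G, \mathcal L))$ and set $\mu := \tau|_{C_c(\mathcal G^0)}$, which by Lemma \ref{l:res} is a $\mathcal G$-invariant real Radon measure (positive if $\tau$ is). By Proposition \ref{traces on groupoid algebras}, to conclude $\tau = \tau_\mu$ it suffices to verify the equality on $\Gamma_c(\mathcal G, \mathcal L)$. Using partition of unity I take $f \in \Gamma_c(U, \mathcal L)$ for an open bisection $U$ and refine $U$ so that either $U \subseteq \mathcal G^0$, in which case both $\tau(f)$ and $\tau_\mu(f)$ equal $\int f \, \mathrm d\mu$, or $r(U) \cap s(U) = \emptyset$. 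The latter refinement is where principality enters decisively: for any $g \in U$ outside $\mathcal G^0$, principality forces $r(g) \neq s(g)$, so continuity of $r$ and $s$ into the Hausdorff space $\mathcal G^0$ lets me shrink $U$ about $g$ to separate its $r$- and $s$-images. For such $U$, choose $\phi \in C_c(\mathcal G^0)$ with $\phi = 1$ on $r(U)$ and $\phi = 0$ on $s(U)$: then $\phi \ast f = f$ while $f \ast \phi = 0$, so traciality of $\tau$ forces $\tau(f) = 0 = \tau_\mu(f)$. The main obstacle is handling the non-Hausdorff case, where $\mathcal G^0$ need not be closed in $\mathcal G$ and typical elements of $\Gamma_c(\mathcal G, \mathcal L)$ are not globally continuous; however, because $\Gamma_c(\mathcal G, \mathcal L)$ is defined as the span of $\Gamma_c(U, \mathcal L)$ over open bisections, the partition-of-unity decomposition and the $r/s$-separation both still go through.
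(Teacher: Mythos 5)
Your proposal is correct and follows essentially the same route as the paper: define $\tau_\mu$ on $\Gamma_c(\mathcal G,\mathcal L)$, prove traciality by reducing to sections on bisections and applying $\mathcal G$-invariance to $W = U\cap V^{-1}$, bound $\tau_\mu$ on $\Gamma_c(\mathcal G,\mathcal L)|_Y$ via $\|f|_{\mathcal G^0}\|_\infty \leq \|f\|_\lambda \leq \|f\|_\pi$, extend by Proposition \ref{traces on groupoid algebras}, and in the principal case refine to bisections with $U\subseteq \mathcal G^0$ or $r(U)\cap s(U)=\emptyset$ and kill the latter with a cutoff function. The only nitpick is that your $\phi$ should be required to equal $1$ on the compact set $r(\mathrm{supp}(f))$ rather than on all of $r(U)$ (which need not be relatively compact), exactly as in the paper's choice of $h$.
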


\begin{proof}
We first define $\tau_\mu$ only for $f \in \Gamma_c(\mathcal G, \mathcal L)$ by 
\[ \tau_\mu(f) =  \int_{\mathcal G^0} f|_{\mathcal G^0} \, \mathrm d \mu. \]
To check that $\tau_\mu$ is tracial on $ \Gamma_c(\mathcal G, \mathcal L)$, it suffices to consider $a,b \in \Gamma_c(\mathcal G, \mathcal L)$ supported on open bisections $U, V \subseteq \mathcal G$ respectively. 
The open bisection $W = U \cap V^{-1}$ has range $r(W) = UV \cap \mathcal G^0$ and source $s(W) = VU \cap \mathcal G^0$, and so 
$ab |_{\mathcal G^0} \in L^1(r(W), \mu)$ and $ba |_{\mathcal G^0} \in L^1(s(W),\mu)$\footnote{If $\mathcal G$ is Hausdorff then these are moreover compactly supported continuous functions.}.
For $g \in W$ we compute
\[ ab(r(g)) = a(g) \cdot b(g^{-1}) = b(g^{-1}) \cdot a(g) = ba(s(g)) \]
and thus $ab |_{\mathcal G^0} = ba |_{\mathcal G^0} \circ \alpha_W$. The $\mathcal G$-invariance of $\mu$ then implies that $\tau_\mu(ab) = \tau_\mu(ba)$. For any relatively compact open $Y \subseteq \mathcal G^0$ and any $f \in \Gamma_c(\mathcal G, \mathcal L)|_Y$, we have 
\[
\lvert \tau_\mu(f) \rvert \leq \lVert f |_{\mathcal G^0} \rVert_\infty \lvert\mu\rvert(Y) \leq \lVert f \rVert_\lambda \lvert\mu\rvert(Y) \leq \lVert f \rVert_\pi \lvert\mu\rvert(Y),
\]
and so by Proposition \ref{traces on groupoid algebras} $\tau_\mu$ extends uniquely to a continuous trace on $\Ped(\cs_\pi(\mathcal G, \mathcal L))$.

Now suppose $\mathcal G$ is principal and let $\tau \in T_{\mathbb R}(\cs_\pi(\mathcal G, \mathcal L))$. Then the composition of $\tau$ with the inclusion $C_c(\mathcal G^0) \hookrightarrow \Gamma_c(\mathcal G, \mathcal L)$ defines a $\mathcal G$-invariant Radon measure $\mu$ on $\mathcal G^0$ by Lemma \ref{l:res}. By principality we may cover $\mathcal G$ with 
open bisections $U \subseteq \mathcal G$ such that either $U \subseteq \mathcal G^0$ or $r(U) \cap s(U) = \emptyset$. Thus, by Proposition \ref{traces on groupoid algebras}, to check $\tau = \tau_\mu$ it suffices by linearity to check that $\tau(f) = \tau_\mu(f)$ for any $f \in \Gamma_c(\mathcal G, \mathcal L)$ whose support is contained in such a bisection $U$. This is immediate for $U \subseteq \mathcal G^0$. If instead $r(U) \cap s(U) = \emptyset$, take $h \in C_c(\mathcal G^0)$ which is $1$ on $s(\mathrm{supp}(f))$ and $0$ on $r(\mathrm{supp}(f))$. Then $f \ast h = f$ and $h \ast f = 0$ so $\tau(f) = 0 = \tau_\mu(f)$.
\end{proof}

\begin{definition}
    Let $(\mathcal G, \mathcal L)$ be a twisted 
    étale groupoid and let $\pi \colon \Gamma_c(\mathcal G, \mathcal L) \to \mathcal B(\mathcal H)$ be a $*$-representation which weakly contains $\lambda$. We let $T_{\mathbb R}^{\mathcal G}(\cs_\pi(\mathcal G, \mathcal L)) \subseteq T_{\mathbb R}(\cs_\pi(\mathcal G, \mathcal L))$ denote the subspace of traces induced by real $\mathcal G$-invariant Radon measures on $\mathcal G^0$. 
\end{definition}

\begin{remark}
If $\mathcal G$ is non-Hausdorff, the induced traces on $\cs_\lambda(\mathcal G, \mathcal L)$ may not be compatible with the quotient $\cs_{\mathrm{ess}}(\mathcal G, \mathcal L)$ of $\cs_\lambda(\mathcal G, \mathcal L)$ known as the essential $\cs$-algebra (see \cite{KwasMeyer} for details). 

For example, if $\mathcal G$ is the `interval with two origins', which has unit space $[0,1]$ and only one non-trivial arrow $g = g^{-1}$ with $s(g) = r(g) = 0$, the essential $\cs$-algebra is simply $C([0,1])$. The reduced algebra $\cs_\lambda(\mathcal G)$ is isomorphic to $\mathbb C \oplus C([0,1])$ with $C(\mathcal G^0) = C([0,1])$ sitting inside $\mathbb C \oplus C([0,1])$ as the subalgebra $\{ (f(0), f) : f \in C([0,1])\}$. When $\mu$ is the point mass at the origin, the induced trace on $\cs_\lambda(\mathcal G)$ is $\tau_\mu(z, f) = \tfrac{1}{2}(z + f(0)) $ for $z\in \mathbb C$ and $f\in C([0,1])$. 
\end{remark}

For Hausdorff groupoids we can describe the structure of $T_{\mathbb R}^\mathcal G(\cs_\pi(\mathcal G, \mathcal L))$ more precisely. The following lemma, which makes use of the canonical conditional expectation, is the key.

\begin{lemma}\label{l:grpoidtop}
    Let $(\mathcal G, \mathcal L)$ be a twisted Hausdorff étale groupoid with a $*$-representation $\pi \colon \Gamma_c(\mathcal G, \mathcal L) \to \mathcal B(\mathcal H)$ which weakly contains the regular representation $\lambda$, and let $a\in \Ped(\cs_\pi(\mathcal G, \mathcal L))_+$. Then there exists a positive element $f\in C_c(\mathcal G^0)$ such that $\tau_\mu(a) = \int_{\mathcal G^0} f \mathrm d\mu$ for every $\mathcal G$-invariant real Radon measure $\mu$ on $\mathcal G^0$. 
\end{lemma}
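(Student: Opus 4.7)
The plan is to exploit the canonical conditional expectation $E \colon \cs_\pi(\mathcal G, \mathcal L) \to C_0(\mathcal G^0)$, which exists because $\mathcal G$ is Hausdorff and $\pi$ weakly contains $\lambda$ (pull back along the quotient map the conditional expectation on $\cs_\lambda(\mathcal G, \mathcal L)$). First, since $C_c(\mathcal G^0)$ generates $\cs_\pi(\mathcal G, \mathcal L)$ as a two-sided closed ideal (because $C_c(\mathcal G^0) \cdot \Gamma_c(\mathcal G, \mathcal L) = \Gamma_c(\mathcal G, \mathcal L)$), Lemma \ref{l:Xfull}(b) with $X = C_c(\mathcal G^0)$ lets us write
\[ a = \sum_{j=1}^m c_j^* x_j^* x_j c_j \]
for some $x_j \in C_c(\mathcal G^0)$ and $c_j \in \cs_\pi(\mathcal G, \mathcal L)$. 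Setting $h_j := x_j^* x_j = |x_j|^2 \in C_c(\mathcal G^0)_+$, the trace property gives
\[ \tau_\mu(a) = \sum_{j=1}^m \tau_\mu(b_j), \qquad b_j := x_j c_j c_j^* x_j^*. \]

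Since $b_j \leq \|c_j\|^2 h_j$, each $b_j$ lies in the hereditary $\cs$-subalgebra $D_j := \overline{h_j \cs_\pi(\mathcal G, \mathcal L) h_j}$, which is contained in $\Ped(\cs_\pi(\mathcal G, \mathcal L))$ by Lemma \ref{l:Pedher}. Using the $C_0(\mathcal G^0)$-bimodularity of $E$, we have $E(b_j) = h_j E(c_j c_j^*) \in C_c(\mathcal G^0)_+$ (compactly supported because $h_j$ is, and positive because $E(c_j c_j^*) \geq 0$ and the two commute). Once we establish
\begin{equation}\label{eq:plan-star}
\tau_\mu(b_j) = \int_{\mathcal G^0} E(b_j) \, d\mu \quad \text{for every $\mathcal G$-invariant real Radon measure } \mu,
\end{equation}
the proof concludes by setting $f := \sum_{j=1}^m E(b_j) = \sum_{j=1}^m h_j E(c_j c_j^*) \in C_c(\mathcal G^0)_+$.

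The main obstacle, and the substantive content of the argument, is \eqref{eq:plan-star}, which I would settle by a density/continuity argument on $D_j$. Both sides of \eqref{eq:plan-star} define bounded real-linear functionals on $D_j$: the left because $\tau_\mu|_{D_j}$ is bounded by the defining topology of $\Ped(\cs_\pi(\mathcal G, \mathcal L))$, and the right because $E(b) \in \overline{h_j C_0(\mathcal G^0) h_j}$ is supported in the compact set $\mathrm{supp}(h_j)$, giving the estimate $|\int E(b) \, d\mu| \leq \|b\| \cdot |\mu|(\mathrm{supp}(h_j)) < \infty$. They agree on the dense $\ast$-subalgebra $h_j \Gamma_c(\mathcal G, \mathcal L) h_j \subseteq D_j$: any $g = h_j k h_j$ with $k \in \Gamma_c(\mathcal G, \mathcal L)$ lies in $\Gamma_c(\mathcal G, \mathcal L)$ (using Hausdorffness so that convolutions of compactly supported sections remain compactly supported), and then $\tau_\mu(g) = \int_{\mathcal G^0} g|_{\mathcal G^0} \, d\mu = \int_{\mathcal G^0} E(g) \, d\mu$ directly from the definition of $\tau_\mu$ on $\Gamma_c$ and the fact that $E$ restricts to $f \mapsto f|_{\mathcal G^0}$ there. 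Boundedness plus density then forces equality on all of $D_j$.
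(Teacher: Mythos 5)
Your proof is correct and follows essentially the same route as the paper: decompose $a = \sum_j c_j^\ast x_j^\ast x_j c_j$ via Lemma \ref{l:Xfull}, apply the conditional expectation, and take $f = \sum_j x_j E(c_j c_j^\ast) x_j^\ast$. The only (minor) difference is in verifying $\tau_\mu(x_j c_j c_j^\ast x_j^\ast) = \int E(x_j c_j c_j^\ast x_j^\ast)\,\mathrm d\mu$: the paper approximates $c_j$ by elements of $\Gamma_c(\mathcal G,\mathcal L)$ and invokes dominated convergence, whereas you compare two bounded functionals on the hereditary subalgebra $D_j \subseteq \Ped(\cs_\pi(\mathcal G,\mathcal L))$ agreeing on the dense subalgebra $h_j\Gamma_c(\mathcal G,\mathcal L)h_j$ --- an equally valid and arguably cleaner verification.
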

\begin{proof}
    As $C_c(\mathcal G^0)$ generates $\cs_\pi(\mathcal G, \mathcal L)$ as a two-sided closed ideal, we may by Lemma \ref{l:Xfull}  find $m\in \mathbb N$, $g_1,\dots, g_m \in C_c(\mathcal G^0)$ and $c_1,\dots, c_m \in \cs_\pi(\mathcal G, \mathcal L)$ such that $a = \sum_{j=1}^m c_j^\ast g_j^\ast g_j c_j$. By considering each term in this sum individually, we may assume without loss of generality that $a = c^\ast g^\ast g c$ for some $g\in C_c(\mathcal G^0)$ and $c \in \cs_\pi(\mathcal G, \mathcal L)$. 

    Let $(h_n)_n$ be a sequence in $\Gamma_c(\mathcal G, \mathcal L)$ converging to $c$. Let $E\colon \cs_\pi(\mathcal G, \mathcal L) \to C_0(\mathcal G^0)$ be the conditional expectation which extends the restriction map $\Gamma_c(\mathcal G, \mathcal L) \to C_c(\mathcal G^0)$, $f\mapsto f|_{\mathcal G^0}$. Define $f := g E(cc^\ast) g^\ast \in C_c(\mathcal G^0)$. Since $E$ is a bounded $C_0(\mathcal G^0)$-bimodule map, we have
    \begin{equation}
        \lim_n (g h_n h_n^\ast g^\ast)|_{\mathcal G^0} = \lim_n E(gh_n h_n^\ast g^\ast) =  E(gcc^\ast g^\ast ) = g E(cc^\ast) g^\ast = f.
    \end{equation} 
    Now, let $\mu\in \mathrm{Rad}_{\mathbb R}^{\mathcal G}(\mathcal G^0)$. As $\tau_\mu$ is continuous, and therefore bounded on $\overline{g \cs_\pi(\mathcal G, \mathcal L) g^\ast}$, we have (by the dominated convergence theorem\footnote{One writes $\mu$ as a linear combination of positive Radon measures and uses that $(gh_n h_n^\ast g^\ast)|_{\mathcal G^0}$ is bounded by the function $\sup_{n\in \mathbb N} \| h_n\|^2 gg^\ast$ which is in $C_c(\mathcal G^0)$ and is therefore integrable for all positive Radon measures on $\mathcal G^0$.})
    \begin{equation}
        \tau_\mu(a) = \tau_\mu(gcc^\ast g^\ast) = \lim_n \tau_\mu(gh_n h_n^\ast g^\ast) = \lim_n \int_{\mathcal G^0} (g h_n h_n^\ast g^\ast)|_{\mathcal G^0} \mathrm d\mu = \int_{\mathcal G^0} f \mathrm d \mu. 
    \end{equation}
\end{proof}

\begin{corollary}\label{c:principal}
    Let $(\mathcal G, \mathcal L)$ be a twisted Hausdorff étale groupoid and let $\pi \colon \Gamma_c(\mathcal G, \mathcal L) \to \mathcal B(\mathcal H)$ be a $*$-representation which weakly contains the regular representation $\lambda$. Then the map $\mathrm{Rad}_{\mathbb R}^{\mathcal G}(\mathcal G^0) \to T_{\mathbb R}^{\mathcal G}(\cs_\pi(\mathcal G, \mathcal L))$ is an isomorphism of ordered topological vector spaces.

    In particular, for any principal twisted étale groupoid $(\mathcal G, \mathcal L)$ with a $*$-representation $\pi$ weakly containing the regular representation $\lambda$, the restriction map $T_{\mathbb R}(\cs_\pi(\mathcal G, \mathcal L)) \to \mathrm{Rad}_{\mathbb R}^{\mathcal G}(\mathcal G^0)$ is an isomorphism of ordered topological vector spaces.
\end{corollary}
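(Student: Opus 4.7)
The plan is to prove the first statement directly by analysing the map $\Phi \colon \mathrm{Rad}_{\mathbb R}^{\mathcal G}(\mathcal G^0) \to T_{\mathbb R}^{\mathcal G}(\cs_\pi(\mathcal G, \mathcal L))$ given by $\mu \mapsto \tau_\mu$, and then derive the principal case as an immediate consequence of Corollary \ref{c:groupoidtrace}, which in that setting identifies $T_{\mathbb R}(\cs_\pi(\mathcal G, \mathcal L))$ with $T_{\mathbb R}^{\mathcal G}(\cs_\pi(\mathcal G, \mathcal L))$.

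For the first statement, $\Phi$ is well-defined by Corollary \ref{c:groupoidtrace}, and its image is $T_{\mathbb R}^{\mathcal G}(\cs_\pi(\mathcal G, \mathcal L))$ by definition, so $\Phi$ is surjective. For injectivity, observe that by construction $\tau_\mu|_{C_c(\mathcal G^0)} = \mu$ (since elements of $C_c(\mathcal G^0)$ restrict to themselves on $\mathcal G^0$), so $\mu$ can be recovered from $\tau_\mu$. The same formula shows that $\mu \geq 0$ if and only if $\tau_\mu$ is non-negative on $C_c(\mathcal G^0)_+$, which holds if and only if $\tau_\mu \in T_+(\cs_\pi(\mathcal G, \mathcal L))$ (one direction is in Corollary \ref{c:groupoidtrace}, the other is trivial by restriction), so $\Phi$ is an order isomorphism.

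For the topology, the inverse $\Phi^{-1}$ is the restriction map $\tau_\mu \mapsto \tau_\mu|_{C_c(\mathcal G^0)}$, which is continuous since $C_c(\mathcal G^0) \subseteq \Ped(\cs_\pi(\mathcal G, \mathcal L))$ and $T_{\mathbb R}^{\mathcal G}(\cs_\pi(\mathcal G, \mathcal L))$ carries the subspace weak$^\ast$-topology. For the continuity of $\Phi$ itself, suppose $\mu_i \to \mu$ in $\mathrm{Rad}_{\mathbb R}^{\mathcal G}(\mathcal G^0)$, i.e.\ $\int h \,\mathrm d\mu_i \to \int h \,\mathrm d\mu$ for every $h \in C_c(\mathcal G^0)$. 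We must show $\tau_{\mu_i}(a) \to \tau_\mu(a)$ for each $a \in \Ped(\cs_\pi(\mathcal G, \mathcal L))$. By linearity we may assume $a$ is positive, and then Lemma \ref{l:grpoidtop} produces a single $f \in C_c(\mathcal G^0)$ with $\tau_\nu(a) = \int f \,\mathrm d\nu$ for every $\nu \in \mathrm{Rad}_{\mathbb R}^{\mathcal G}(\mathcal G^0)$; applied to $\mu_i$ and $\mu$ this gives exactly the required convergence. The main subtlety here is that one needs the witness $f$ to be independent of the trace $\tau_\nu$, which is precisely the content of Lemma \ref{l:grpoidtop}, so no further work is needed.

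For the ``In particular'' statement, when $\mathcal G$ is principal Corollary \ref{c:groupoidtrace} asserts that every self-adjoint continuous trace on $\Ped(\cs_\pi(\mathcal G, \mathcal L))$ has the form $\tau_\mu$ for some $\mu \in \mathrm{Rad}_{\mathbb R}^{\mathcal G}(\mathcal G^0)$, so $T_{\mathbb R}(\cs_\pi(\mathcal G, \mathcal L)) = T_{\mathbb R}^{\mathcal G}(\cs_\pi(\mathcal G, \mathcal L))$ as ordered topological vector spaces. The restriction map in question then coincides with $\Phi^{-1}$, and so is an isomorphism of ordered topological vector spaces by the first part.
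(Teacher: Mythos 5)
Your proof is correct and follows essentially the same route as the paper: surjectivity by definition, injectivity and the order isomorphism via $\tau_\mu|_{C_c(\mathcal G^0)} = \mu$, and the homeomorphism via Lemma \ref{l:grpoidtop}. The one justification you should add concerns the ``In particular'' statement: there the groupoid is only assumed principal, so before invoking the first part of the corollary (and Lemma \ref{l:grpoidtop}, both of which require Hausdorffness) you need to observe that principal étale groupoids are automatically Hausdorff --- if $g_1 \neq g_2$ then principality forces $s(g_1) \neq s(g_2)$ or $r(g_1) \neq r(g_2)$, and the Hausdorffness of $\mathcal G^0$ together with continuity of $s$ and $r$ then separates $g_1$ and $g_2$ by open sets.
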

\begin{proof}
    The map is surjective by definition, and injective since $\tau_\mu|_{C_c(\mathcal G^0)} = \mu$. The positive cones are obviously preserved, and the map is a homeomorphism by Lemma \ref{l:grpoidtop}.

    The final part follows from Corollary \ref{c:groupoidtrace} since principal groupoids are automatically Hausdorff.\footnote{If $g_1, g_2 \in \mathcal G$ are distinct, principality implies that either $s(g_1) \ne s(g_2)$ or $r(g_1) \ne r(g_2)$, after which Hausdorffness of $\mathcal G^0$ can be used to separate $g_1$ and $g_2$ by open sets in $\mathcal G$.}
\end{proof}

\newcommand{\etalchar}[1]{$^{#1}$}


\begin{thebibliography}{EGLN20}

\bibitem[AS21]{AfsarSims}
Zahra Afsar and Aidan Sims.
\newblock K{MS} states on the {$\cs$}-algebras of {F}ell bundles over
  groupoids.
\newblock {\em Math. Proc. Cambridge Philos. Soc.}, 170(2):221--246, 2021.

\bibitem[Alf71]{Alfsen-book}
Erik~M. Alfsen.
\newblock {\em Compact convex sets and boundary integrals}, volume Band 57 of
  {\em Ergebnisse der Mathematik und ihrer Grenzgebiete [Results in Mathematics
  and Related Areas]}.
\newblock Springer--Verlag, New York--Heidelberg, 1971.

\bibitem[BR24]{BRtraces}
Bruce Blackadar and Mikael Rørdam.
\newblock The space of tracial states on a $\cs$-algebra.
\newblock Preprint, arXiv:2409.09644, 2024.

\bibitem[Bou52]{Bourbaki-XIII}
Nicolas Bourbaki.
\newblock {\em El\'{e}ments de math\'{e}matique. {XIII}. {P}remi\`ere partie:
  {L}es structures fondamentales de l'analyse. {L}ivre {VI}: {I}nt\'{e}gration.
  {C}hapitre {I}: {I}n\'{e}galit\'{e}s de convexit\'{e}. {C}hapitre {II}:
  {E}spaces de {R}iesz. {C}hapitre {III}: {M}esures sur les espaces localement
  compacts. {C}hapitre {IV}: {P}rolongement d'une mesure; espaces {$L^p$}}.
\newblock Actualit\'{e}s Scientifiques et Industrielles [Current Scientific and
  Industrial Topics], No. 1175. Hermann \& Cie, Paris, 1952.



\bibitem[BO08]{BrownOzawa-book-approx}
Nathanial~P. Brown and Narutaka Ozawa.
\newblock {\em {$\cs$}-algebras and finite-dimensional approximations},
  volume~88 of {\em Graduate Studies in Mathematics}.
\newblock American Mathematical Society, Providence, RI, 2008.


\bibitem[CGS{\etalchar{+}}]{CGSTW2}
José Carrión, James Gabe, Christopher Schafhauser, Aaron Tikuisis, and Stuart
  White.
\newblock Classifying {$^\ast$}-homomorphisms {II}.
\newblock {\em In preparation}.

\bibitem[Chr23]{ChristensenKMSGroupoid}
Johannes Christensen.
\newblock The structure of {KMS} weights on \'etale groupoid {$\cs$}-algebras.
\newblock {\em J. Noncommut. Geom.}, 17(2):663--691, 2023.

\bibitem[EGLN20]{EGLN}
George~A. Elliott, Guihua Gong, Huaxin Lin, and Zhuang Niu.
\newblock The classification of simple separable {KK}-contractible {$\cs$}-algebras with finite nuclear dimension.
\newblock {\em J. Geom. Phys.}, 158:103861, 51, 2020.

\bibitem[ERS11]{ERS}
George~A. Elliott, Leonel Robert, and Luis Santiago.
\newblock The cone of lower semicontinuous traces on a {$\cs$}-algebra.
\newblock {\em Amer. J. Math.}, 133(4):969--1005, 2011.

\bibitem[EP22]{ExelPitts}
Ruy Exel and David~R. Pitts.
\newblock {\em Characterizing groupoid {$\cs$}-algebras of non-{H}ausdorff
  \'etale groupoids}, volume 2306 of {\em Lecture Notes in Mathematics}.
\newblock Springer, Cham, [2022] \copyright2022.

\bibitem[Gab20]{Gabe-O2class}
James Gabe.
\newblock A new proof of {K}irchberg's {$\mathcal O_2$}-stable classification.
\newblock {\em J. Reine Angew. Math.}, 761:247--289, 2020.

\bibitem[GL20]{GL2}
Guihua Gong and Huaxin Lin.
\newblock On classification of non-unital amenable simple {$\cs$}-algebras,
  {II}.
\newblock {\em J. Geom. Phys.}, 158:103865, 102, 2020.

\bibitem[GL22]{GL3}
Guihua Gong and Huaxin Lin.
\newblock On classification of non-unital amenable simple {$\cs$}-algebras,
  {III} : {T}he range and the reduction.
\newblock {\em Ann. K-Theory}, 7(2):279--384, 2022.

\bibitem[GL24]{GL4}
Guihua Gong and Huaxin Lin.
\newblock On classification of non-unital amenable simple {$\cs$} -algebras,
  {IV} : {S}tably projectionless {$\cs$}-algebras.
\newblock {\em Ann. K-Theory}, 9(2):143--339, 2024.

\bibitem[Kad51]{Kadison-duality}
Richard~V. Kadison.
\newblock A representation theory for commutative topological algebra.
\newblock {\em Mem. Amer. Math. Soc.}, 7:39, 1951.


\bibitem[KR02]{KirchbergRordam-absorbingOinfty}
Eberhard Kirchberg and Mikael R{\o}rdam.
\newblock Infinite non-simple {$\cs$}-algebras: absorbing the {C}untz algebras
  {$\mathcal{O}_\infty$}.
\newblock {\em Adv. Math.}, 167(2):195--264, 2002.

\bibitem[KM21]{KwasMeyer}
Bartosz~K. Kwa\'sniewski and Ralf Meyer.
\newblock Essential crossed products for inverse semigroup actions: simplicity
  and pure infiniteness.
\newblock {\em Doc. Math.}, 26:271--335, 2021.

\bibitem[LZ24]{LiZhang}
Kang Li and Jiawen Zhang.
\newblock Tracial states on groupoid $\cs$-algebras and essential freeness.
\newblock {\em J. Noncommut. Geom., to appear}, 2024.

\bibitem[LR19]{LiRenault}
Xin Li and Jean Renault.
\newblock Cartan subalgebras in {$\cs$}-algebras. {E}xistence and
  uniqueness.
\newblock {\em Trans. Amer. Math. Soc.}, 372(3):1985--2010, 2019.

\bibitem[Meg98]{Megginson}
Robert~E. Megginson.
\newblock {\em An introduction to {B}anach space theory}, volume 183 of {\em
  Graduate Texts in Mathematics}.
\newblock Springer--Verlag, New York, 1998.

\bibitem[Nes13]{NeshveyevKMSGroupoid}
Sergey Neshveyev.
\newblock K{MS} states on the {$\cs$}-algebras of non-principal groupoids.
\newblock {\em J. Operator Theory}, 70(2):513--530, 2013.

\bibitem[NS22]{NeshveyevStammeier}
Sergey Neshveyev and Nicolai Stammeier.
\newblock The groupoid approach to equilibrium states on right {LCM}
              semigroup {$\cs$}-algebras.
\newblock {\em J. Lond. Math. Soc. (2)}, 105(1):220--250, 2022.

\bibitem[Ped66]{Pedersen-MeasureI}
Gert~K. Pedersen.
\newblock Measure theory for {$\cs$}-algebras.
\newblock {\em Math. Scand.}, 19:131--145, 1966.

\bibitem[Ped68]{Pedersen-MeasureII}
Gert~K. Pedersen.
\newblock Measure theory for {$\cs$}-algebras. {II}.
\newblock {\em Math. Scand.}, 22:63--74, 1968.

\bibitem[Ped69]{Pedersen-MeasureIII}
Gert~K. Pedersen.
\newblock Measure theory for {$\cs$}-algebras. {III}.
\newblock {\em Math. Scand.}, 25:71--93, 1969.

\bibitem[Ped79]{Pedersen-book-automorphism}
Gert~K. Pedersen.
\newblock {\em {$\cs$}-algebras and their automorphism groups}, volume~14
  of {\em London Mathematical Society Monographs}.
\newblock Academic Press Inc. [Harcourt Brace Jovanovich Publishers], London,
  1979.

\bibitem[Ped89]{Pedersen-book-analysisnow}
Gert~K. Pedersen.
\newblock {\em Analysis now}, volume 118 of {\em Graduate Texts in
  Mathematics}.
\newblock Springer--Verlag, New York, 1989.

\bibitem[Rob09]{Robert}
Leonel Robert.
\newblock On the comparison of positive elements of a {$\cs$}-algebra by lower
  semicontinuous traces.
\newblock {\em Indiana Univ. Math. J.}, 58(6):2509--2515, 2009.

\end{thebibliography}
\end{document}